\newtheorem{theorem}{THEOREM}[section]
 \newtheorem{corollary}[theorem]{COROLLARY}
 \newtheorem{lemma}[theorem]{LEMMA}
 \newtheorem{proposition}[theorem]{PROPOSITION}
 \newtheorem{conjecture}[theorem]{CONJECTURE}
 \newtheorem{definition}[theorem]{\emph{Definition}}
 \newcommand{\iso}{\approx}
\newcommand{\ind}{\text{\rm Ind }}
\newcommand{\res}{\text{\rm Res }}
\newcommand{\inff}{\text{\rm Inf }}
\newcommand{\deff}{\text{\rm Def }}
\begin{document}

\title[Relative Brauer Relations]{Relative Brauer Relations of Abelian p-groups}%
\author{Marian F. Anton}%
\address{Department of Mathematics, CCSU, New Britain, CT 06050, U.S.A. and I.M.A.R., P.O. Box 1-764, Bucharest, RO 014700}%
\email{anton@ccsu.edu}%

\thanks{}%
\subjclass{19A22, 20C25}%
\keywords{permutation representations, double Burnside module}%

\date{\today}%
\begin{abstract}
The Brauer relations of a finite group $G$ are virtual differences of non-isomorphic $G$-sets $X-Y$ which induce isomorphic permutation $G$-representations $\mathbb Q[X]\simeq\mathbb Q[Y]$ over the rationals. These relations have been classified by Tornehave-Bouc and Bartel-Dokchitser.  Motivated by stable homotopy theory, a relative version of Brauer relations for $(G,C_p)$-bisets which are $C_p$-free have been classified by Kahn in case $G$ is an elementary Abelian $p$-group. In this paper we extend Kahn's classification to the case when $G$ is a finite Abelian $p$-group.  
\end{abstract}
\maketitle

\section{Introduction}

The Burnside ring $B(G)$ of a finite group $G$ is the Grothendick ring of the category of finite $G$-sets and is isomorphic up to completion to the stable cohomotopy group of the classifying space $B_G$ according to Segal's Conjecture \cite{Car84}. The relative Burnside module $B(G,H)$ of a pair of finite groups $(G,H)$ is the Grothendick module of the category of finite $(G,H)$-bisets that are $H$-\emph{free}. Up to completion, $B(G,H)$ describes the stable homotopy classes of maps from the classifying space $B_G$ to the classifying space $B_H$,  by the generalized Segal's Conjecture \cite{May85}. 

The representation ring $R_F(G)$ of a finite group $G$ over a field $F$ is the Grothendick ring of the category of finitely generated $F G$-modules, where $FG$ denotes the group ring of $G$ over $F$. Let $F=\mathbb Q$ be the field of rational numbers. The functor sending each finite $G$-set $X$ to the permutation $G$-module $\mathbb Q[X]$ induces a ring homomorphism from the Burnside ring $B(G)$ to the rational representation ring $R_\mathbb Q (G)$:
\begin{align}
f_G:B(G)\to R_\mathbb Q(G),&& f_G[X]=\mathbb Q[X].
\end{align}
The cokernel of $f_G$ is finite of exponent dividing the group order $|G|$ by Artin's induction theorem \cite{Ser78} and it is trivial if $G$ is a finite $p$-group by Ritter-Segal \cite{Rit72,Seg72}. The elements of the kernel $K(G)$ of $f_G$ are called the \emph{ Brauer relations} of the group $G$ or the \emph{$G$-relations} and these have been classified by  Tornehave-Bouc \cite{Bou04} for finite $p$-groups and Bartel-Dokchitser \cite{Bar15} for arbitrary finite groups. 

We note that the image of the submodule $B(G,H)\subset B(G\times H)$ under the map $f_{G\times H}$ is contained in the Grothendieck submodule $R_\mathbb Q(G,H)\subset R_\mathbb Q(G\times H)$ of the category of finitely generated $\mathbb Q(G\times H)$-modules which are \emph{free right} $\mathbb QH$-modules.  The kernel and cokernel of the well defined restricted homomorphism 
\begin{align}\label{map}
f_{G,H}=f_{G\times H}:B(G,H)\xrightarrow{} R_\mathbb Q(G, H)
\end{align}
are also of interest in view of the generalized Segal's Conjecture. In particular, it makes sense to call the elements of the kernel $K(G,H)$ of $f_{G,H}$ the \emph{relative Brauer relations} of the pair $(G,H)$ or the $(G,H)${\em-relations}. The cokernel of $f_{G,H}$ is trivial for $G$ a finite $p$-group and $H=C_p$ by Anton \cite{Ant06} and the $(G,C_p)$-relations have been classified by Kahn \cite{Kah13} for $G$ an elementary Abelian $p$-group, where $C_p$ \emph{denotes the cyclic group of prime order $p$}. The main result of this paper is

\begin{theorem}\label{mainb} The relative Brauer relations of the pair $(G,C_p)$ for $G$ a finite Abelian $p$-group are linear combinations of relative Brauer relations `indufted' from sub-quotients of $G\times C_p$ of the form $C_p\times C_p\times C_p$. 
\end{theorem}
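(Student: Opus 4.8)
The plan is to argue by induction on $|G|$, with two engines: Kahn's classification for elementary Abelian groups \cite{Kah13}, and the vanishing of $\operatorname{coker}f_{G,C_p}$ from \cite{Ant06}, which turns the problem into a rank count. First I would fix the functorial framework. For $S\trianglelefteq T\le G$ the group $(T\times C_p)/(S\times 1)\cong (T/S)\times C_p$ is a sub-quotient of $G\times C_p$, and for $\rho\in K(T/S,C_p)$ the element
\[
\mathrm{Ind}_{T\times C_p}^{G\times C_p}\,\mathrm{Inf}_{(T\times C_p)/(S\times 1)}^{T\times C_p}(\rho)
\]
lies in $K(G,C_p)$: inflation, induction, restriction and deflation performed on the $G$--side preserve $C_p$--freeness of bisets and commute with $f_{-,C_p}$. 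When $T/S\cong C_p\times C_p$ this is one of the \emph{`indufted'} relations in the statement; let $\mathcal K(G)\subseteq K(G,C_p)$ be the subgroup they generate. Since (for $G$ Abelian) the induction--inflation maps attached to sections compose, up to harmless isomorphisms of sections, to maps of the same type, it suffices to prove, for every finite Abelian $p$--group $G$, that $K(G,C_p)$ is generated by the images of the $K(E,C_p)$ under indufction along the \emph{proper} sub-quotients $E=(T\times C_p)/(S\times 1)$ of $G\times C_p$; the induction hypothesis applied to the smaller groups $T/S$ then rewrites everything through $(C_p\times C_p)\times C_p$. The case $G\cong C_p\times C_p$, where $G\times C_p$ is itself the sub-quotient in question, and more generally every elementary Abelian $G$, is Kahn's theorem, which serves as the base.

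To run the induction I would compute $\operatorname{rank}K(G,C_p)=\operatorname{rank}B(G,C_p)-\operatorname{rank}R_{\mathbb Q}(G,C_p)$, legitimate since $\operatorname{coker}f_{G,C_p}=0$. Goursat's lemma and the classification of transitive $C_p$--free $(G,C_p)$--bisets give a $\mathbb Z$--basis of $B(G,C_p)$: the ``untwisted'' bisets $(G/A)\times C_p$ for $A\le G$, together with the ``twisted'' bisets indexed by triples $(Q\le P\le G,\ [P:Q]=p,\ \psi\colon P/Q\xrightarrow{\ \sim\ }C_p)$. On the other side, writing $\mathbb QC_p=\mathbb Q\times\mathbb Q(\zeta_p)$, a $\mathbb Q(G\times C_p)$--module free over the right $\mathbb QC_p$ is exactly a pair consisting of a $\mathbb QG$--module and a $\mathbb Q(\zeta_p)[G]$--module of matching dimension (over $\mathbb Q$, resp.\ over $\mathbb Q(\zeta_p)$), so $R_{\mathbb Q}(G,C_p)$ is the sublattice of $R_{\mathbb Q}(G)\oplus R_{\mathbb Q(\zeta_p)}(G)$ defined by one dimension equation, of rank $\operatorname{rank}R_{\mathbb Q}(G)+\operatorname{rank}R_{\mathbb Q(\zeta_p)}(G)-1$; those ranks count cyclic subgroups of $G$ and Galois orbits of characters of $G$ over $\mathbb Q(\zeta_p)$. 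Carried out for $G$ cyclic this yields $\operatorname{rank}K(G,C_p)=0$, i.e.\ $K(C_{p^n},C_p)=0$, which is exactly the claimed statement for cyclic $G$, as they have no $C_p\times C_p$--section.

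For the inductive step $G$ is neither cyclic nor elementary Abelian, so $|G|\ge p^3$ and proper $C_p\times C_p$--sections of $G$ exist. As candidate generators of $K(G,C_p)$ I would take the indufctions of Kahn's relative Brauer relations along all sub-quotients $(T\times C_p)/(S\times 1)$ with $T/S\cong C_p\times C_p$ proper in $G$, and then prove two things: that these span a sublattice of $K(G,C_p)$ of the full rank computed above, and --- the crux --- that this sublattice is \emph{saturated}, hence all of $K(G,C_p)$. I expect the saturation to be the main obstacle: it is an integral, not merely rational, assertion, and the difficulty is concentrated in the twisted basis vectors $(Q\le P\le G,\psi)$ when $G$ has a cyclic direct factor of order $\ge p^2$, where one must show that the transition matrix between the candidate relations and the untwisted/twisted basis of $B(G,C_p)$ has trivial elementary divisors; the right bookkeeping here is a careful choice of which inflations and restrictions of Kahn's relations to include, indexed by the lattice of $C_p\times C_p$--sections of $G$. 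Finally, that only sub-quotients of the split form $(T\times C_p)/(S\times 1)$ appear in the statement --- that the other $C_p\times C_p\times C_p$--sub-quotients of $G\times C_p$ give nothing new --- should follow from the Goursat description of $C_p$--free bisets, the free right $C_p$--action forcing the $C_p$--factor to split off.
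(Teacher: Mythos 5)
Your overall architecture (rank count via surjectivity of $f_{G,C_p}$, Kahn's theorem as base, then a saturation argument) correctly locates where the difficulty lies, but the proposal stops exactly at the point where the proof has to happen. The step you yourself flag as the crux --- that the sublattice spanned by the indufted relations is \emph{saturated} in $K(G,C_p)$, equivalently that the relevant transition matrix has trivial elementary divisors --- is stated as an expectation, not proved: no explicit candidate set of relations is exhibited, no transition matrix is analyzed, and no mechanism is offered for the $p$-torsion that genuinely appears. In the paper the rational statement is Theorem \ref{mainprop}, which only identifies $K(G,C_p)[\tfrac{1}{p}]$, and the passage from ``$px$ is an indufted combination'' to ``$x$ is an indufted combination'' occupies all of Sections 6--7: the classification of the Bouc--Tornehave generators of $K(G\times C_p)$ into Types 1, 2, 3, the congruence of Lemma \ref{type3} handling Type 3, the reduction of Proposition \ref{reduction} to Type 2 combinations, and the elimination algorithm built on the signature homomorphism $\sigma:B(G\times C_p)\to B(G)$ together with the comparison of two resolutions of a pair $L<G'$ modulo $K'(G,C_p)$ (Lemma \ref{zero}). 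Without some version of that argument your induction has no content beyond the statement up to torsion.

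A second concrete problem is your closing claim that only split sub-quotients $(T\times C_p)/(S\times 1)$ are needed and that the other $C_p\times C_p\times C_p$ sub-quotients ``give nothing new.'' This is unproved and is in tension with how the generators actually arise: the relations used in the paper include $A_{G',L,\alpha}$ indufted from $(G'\times C_p)/(L\times\alpha)$ with $\alpha$ a nontrivial homomorphism (Lemma \ref{type1}) and the differences $B_{G',C,\beta}-B_{G',C,\epsilon}$ indufted from $(G'\times C_p)/(L\times\beta)$ with $L=\ker\beta$ (proof of Proposition \ref{reduction}); these non-split sections are precisely what reach the twisted basis elements $K\times\rho$, $\rho\neq\epsilon$, of $B(G,C_p)$. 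If you insist on split sections only, you incur an extra (nontrivial) claim beyond the theorem. Note also that the paper neither inducts on $|G|$ nor uses Kahn's classification as a base case: it starts from the Bouc--Tornehave description of $K(G\times C_p)$ for the ambient abelian $p$-group and works downward to $K(G,C_p)=K(G\times C_p)\cap B(G,C_p)$, which is what makes the generating set explicit enough to run the elimination. Your route is not unreasonable, but as written it is a plan rather than a proof.
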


We note that there is a natural ring homomorphism mapping $R_F(G)$ to the stable homotopy classes of maps from the classifying space $B_G$ to the plus construction of the classifying space $B_{GL(F)}$, where $GL(F)$ is the infinite general linear group over the field $F$. Up to completion, this map is an isomorphism for $F$ the (topological) field of complex numbers by Atiyah's Theorem \cite{Ati69} or for $F$ a finite field \cite{Kass89}. If $F=\mathbb Q$, this homomorphism connects Brauer relations with algebraic $K$-theory  \cite{Mit92}. The relative Brauer relations are connected with maps between algebraic $K$-theory spectra.

The background terminology will be reviewed in section \S 2 and a precise formulation of Theorem \ref{mainb} will be given in section \S 3. In section \S 4 we prove some key rank lemmas. In section \S 5 we prove the main theorem up to torsion. In section \S 6 we reduce the proof to a set of special generators and finish the argument in \S 7. 

\section{Background and terminology}

This section is a survey of basic definitions and facts about Burnside and representation modules, many of them being used in this paper. 

\subsection{Relative Burnside modules} Following \cite{Bar15}, the \emph{Burnside ring} $B(\Gamma)$ of a group $\Gamma$ is the free Abelian group generated by the isomorphism classes $[X]$ of finite $\Gamma$-sets $X$ modulo the relations $[X\sqcup Y]=[X]+[Y]$ where $\sqcup$ denotes the disjoint union. The product in $B(\Gamma)$ is given by $[X]*[Y]=[X\times Y]$ where $X\times Y$ is the $\Gamma$-set under the diagonal $\Gamma$-action. 

\begin{proposition}[\cite{Bar15}] \label{bbasis} The transitive $\Gamma$-sets are \emph{left coset spaces} $\Gamma/L$ of subgroups $L\le \Gamma$ and their isomorphism classes $[\Gamma/L]$ form a basis for $B(\Gamma)$.  
\end{proposition}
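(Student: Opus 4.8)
The plan is to prove the two assertions separately: that the finite transitive $\Gamma$-sets are precisely the coset spaces, and that the resulting classes are an additive basis. For the first assertion, given a finite transitive $\Gamma$-set $X$ I would choose $x\in X$, set $L=\mathrm{Stab}_\Gamma(x)=\{g\in\Gamma:gx=x\}$ (a finite-index subgroup, since $\Gamma/L$ must be finite), and check that $gL\mapsto gx$ is a well-defined $\Gamma$-equivariant bijection $\Gamma/L\xrightarrow{\sim}X$: well-definedness and injectivity are immediate from the definition of the stabilizer, and surjectivity is transitivity. I would also record the standard bookkeeping that $\Gamma/L\cong\Gamma/L'$ as $\Gamma$-sets exactly when $L$ and $L'$ are conjugate in $\Gamma$: a $\Gamma$-map sending $L\mapsto gL'$ forces $L\le gL'g^{-1}$, and having such maps in both directions forces $[\Gamma:L]=[\Gamma:L']$ and hence $L=gL'g^{-1}$. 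So isomorphism classes of finite transitive $\Gamma$-sets correspond bijectively to conjugacy classes of (finite-index) subgroups of $\Gamma$.

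For spanning I would use that every finite $\Gamma$-set $X$ is the disjoint union of its $\Gamma$-orbits, each of which is transitive; by the first assertion each orbit is isomorphic to some $\Gamma/L_i$, so $[X]=\sum_i[\Gamma/L_i]$ in $B(\Gamma)$. Hence the classes $[\Gamma/L]$ generate $B(\Gamma)$ as an abelian group, and since conjugate subgroups yield the same class we may keep one $L$ per conjugacy class.

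Linear independence is the substantive point, and I would establish it via the \emph{table of marks}. For a subgroup $H\le\Gamma$ the fixed-point count $X\mapsto\abs{X^H}$ is additive on disjoint unions, hence defines a homomorphism $\varphi_H\colon B(\Gamma)\to\mathbb Z$. One computes $\abs{(\Gamma/L)^H}=\#\{gL:g^{-1}Hg\le L\}$, which vanishes unless $H$ is subconjugate to $L$ and which equals $[N_\Gamma(L):L]\ge 1$ when $H=L$. Given finitely many pairwise non-conjugate subgroups, order them as $L_1,\dots,L_n$ with $\abs{L_1}\le\cdots\le\abs{L_n}$ (a linear refinement of the subconjugacy order); then the matrix $\big(\varphi_{L_i}([\Gamma/L_j])\big)_{i,j}$ is triangular with nonzero diagonal, hence invertible over $\mathbb Q$. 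Therefore the classes $[\Gamma/L]$ are $\mathbb Z$-linearly independent, and together with spanning they form a basis of $B(\Gamma)$.

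The one delicate point is the ordering step: for triangularity I need subconjugacy to be a genuine partial order on conjugacy classes, i.e. $H$ subconjugate to $L$ and $L$ subconjugate to $H$ should force $H$ conjugate to $L$. This is exactly where finiteness enters --- comparing the finite indices $[\Gamma:H]$ and $[\Gamma:L]$ along the chain of inclusions forces equality and hence conjugacy. Everything else is routine manipulation of orbit decompositions and fixed-point counts.
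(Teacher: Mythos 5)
Your proof is correct and complete: orbit--stabilizer plus the orbit decomposition gives spanning, and the table-of-marks argument (fixed-point homomorphisms $\varphi_H$, triangularity with respect to a linear refinement of the subconjugacy order, nonzero diagonal entries $[N_\Gamma(L):L]$) gives linear independence. The paper states this proposition without proof, citing Bartel--Dokchitser, and your argument is exactly the standard one found there; your care with the subconjugacy partial order and with finiteness of indices is the right place to be careful, and nothing is missing.
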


There is a bijection sending the conjugacy class of a subgroup $L\le \Gamma$ to  the  basis element  $[\Gamma/L]$. Using this identification,\\

\emph{We write the elements of $B(\Gamma)$ as integral linear combinations $\sum n_iL_i$ of subgroups $L_i\le \Gamma$ up to conjugacy.} \\

The product of two basis elements in $B(\Gamma)$ is given by the double coset formula
\begin{align}
L*M=\sum_{x\in L\backslash \Gamma/M} L\cap \prescript{x}{}M
\end{align}
where $L,M\le \Gamma$ and $\prescript{x}{}M=xMx^{-1}$ for $x\in\Gamma$.

Given a pair $(G,H)$ of finite groups, let $\Gamma=G\times H$. A $(G,H)$-\emph{biset} is a \emph{finite} set $X$, endowed with a left $G$-action and a right $H$-action that commute with each other, or equivalently, a left $\Gamma$-action where the right $H$-action is defined via  
\begin{align}\label{right}
xh=h^{-1}x\text{ for } x\in X\text{ and }h\in H.
\end{align}
\begin{definition}The \emph{relative Burnside module} $B(G,H)$ of a pair of finite groups $(G,H)$ is the free Abelian group generated by the isomorphism classes $[X]$ of $(G,H)$-bisets which are \emph{right  $H$-free}, modulo the relations $[X\sqcup Y]=[X]+[Y]$.\end{definition} 

According to \cite{May85}, the transitive right $H$-free $(G,H)$-bisets are {\em twisted products} $G\times_\rho H$ between $G$ and a group homomorphism $\rho:K\to H$ from a subgroup $K\le G$. Such a product is the quotient of $\Gamma$ modulo the relations 
$$
[gk,h]=[g,\rho(k)h]\text{ for }g\in G, h\in H,\text{ and }k\in K.
$$ 
Here $[g,h]$ denotes the class of $(g,h)$ in $G\times_\rho H$. The map $\Gamma\to G\times H$ given by $(g,h)\mapsto (g,h^{-1})$ induces an isomorphism of left $\Gamma$-sets $\Gamma/(K\times\rho)\iso G\times_\rho H$ where 
\begin{align}K\times\rho=\{(k,\rho(k)):k\in K\} \label{grp}\end{align}
is \emph{the graph (subgroup) of} $\rho$ in $\Gamma$. This is an isomorphism of $(G,H)$-bisets via \eqref{right}. 

\begin{proposition} [\cite{May85}]\label{rbbasis} 
A basis for the submodule $B(G,H)\subset B(\Gamma)$ consists of the isomorphism classes of twisted products $[G\times_\rho H]$. 
\end{proposition}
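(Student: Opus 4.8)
The plan is to deduce this from Proposition~\ref{bbasis} by reducing to transitive bisets and then determining which subgroups $L\le\Gamma$ make the coset space $\Gamma/L$ right $H$-free.

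First I would note that a finite disjoint union of right $H$-free $(G,H)$-bisets is right $H$-free, and conversely that every $\Gamma$-orbit inside a right $H$-free biset $X$ is itself right $H$-free, because $\{1\}\times H\le\Gamma$ acts freely on $X$ and hence on every sub-$\Gamma$-set. So every class in $B(G,H)$ is an integral combination of classes of transitive right $H$-free bisets, and by Proposition~\ref{bbasis} a transitive $(G,H)$-biset is, as a $\Gamma$-set, isomorphic to some $\Gamma/L$.

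Next I would unwind the twist \eqref{right}: the right $H$-action carries $\gamma L$ to $(1,h^{-1})\gamma L$, so $\{1\}\times H$ acts freely on $\Gamma/L$ iff $L\cap\gamma^{-1}(\{1\}\times H)\gamma=\{1\}$ for every $\gamma\in\Gamma$, and since $\{1\}\times H$ is normal in $\Gamma=G\times H$ this reduces to $L\cap(\{1\}\times H)=\{1\}$. This says exactly that the projection $L\to G$ is injective; its image is a subgroup $K\le G$, and composing the inverse of $L\xrightarrow{\sim}K$ with the projection $L\to H$ gives a unique homomorphism $\rho\colon K\to H$ with $L=K\times\rho$ the graph \eqref{grp}. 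Combined with the isomorphism $\Gamma/(K\times\rho)\iso G\times_\rho H$ of $(G,H)$-bisets recalled above, this identifies the transitive right $H$-free bisets precisely with the twisted products.

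To conclude, I would use that isomorphism of right $H$-free $(G,H)$-bisets coincides with isomorphism of underlying $\Gamma$-sets, so the inclusion $B(G,H)\subset B(\Gamma)$ carries the generating set $\{[G\times_\rho H]\}$ bijectively onto the sub-collection $\{[\Gamma/L] : L\cap(\{1\}\times H)=\{1\}\}$ of the basis from Proposition~\ref{bbasis}; being a subset of a basis it is linearly independent, hence a basis of its span, which is the image of $B(G,H)$, and pulling back gives the claim. I expect the only subtle point to be the computation in the third paragraph — carrying the inverse in \eqref{right} correctly through the $\Gamma$-action on cosets and invoking normality of $\{1\}\times H$; the compatibility of the passage $L\mapsto(K,\rho)$ with $\Gamma$-conjugacy needs no separate argument, since we only want a bijection onto a sub-collection of an already known basis.
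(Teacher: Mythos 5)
Your argument is correct, but note that the paper offers no proof of this proposition at all: it is quoted from \cite{May85}, and the classification of transitive right $H$-free $(G,H)$-bisets as twisted products $G\times_\rho H\iso\Gamma/(K\times\rho)$ is likewise taken as given in the preceding paragraph. What you have done is supply a self-contained proof of both statements from first principles, and every step checks out: right $H$-freeness of $X$ is equivalent to freeness of the left action of $\{1\}\times H$ via \eqref{right}; the stabilizer of $\gamma L$ in $\Gamma/L$ is $\gamma L\gamma^{-1}$, so freeness of the $\{1\}\times H$-action is $\gamma^{-1}(\{1\}\times H)\gamma\cap L=\{1\}$ for all $\gamma$, which normality of $\{1\}\times H$ in $G\times H$ collapses to $L\cap(\{1\}\times H)=\{1\}$; and such $L$ are exactly the graph subgroups $K\times\rho$ of \eqref{grp}. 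Your closing observations are also the right ones to make explicit: a $\Gamma$-equivariant map automatically commutes with the right $H$-action defined by \eqref{right}, so biset isomorphism agrees with $\Gamma$-set isomorphism, and the condition $L\cap(\{1\}\times H)=\{1\}$ is conjugation-invariant, so the twisted products pick out a genuine sub-collection of the basis of Proposition~\ref{bbasis}, whence linear independence is free. The one thing you get essentially for granted and could flag is that $B(G,H)$ itself is free on the isomorphism classes of transitive right $H$-free bisets (unique orbit decomposition), which is what makes the map to $B(\Gamma)$ injective rather than merely surjective onto its image; your first paragraph implies this but does not quite say it.
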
 

There is a bijection between these basis elements and the conjugacy classes of group homomorphisms $\rho:K\to H$ with $K\le G$ or equivalently of subgroups $K\times \rho\le \Gamma$. Using this identification, \\

\emph{We write the elements of $B(G,H)$ as integral linear combinations $\sum n_iK_i\times\rho_i$ of graphs up to conjugacy of homomorphisms $\rho_i:K_i\to H$ from subgroups $K_i\le G$.}\\

If $Z$ is a $G$-set and $X$ is a $(G,H)$-biset, then the product $[Z][X]=[Z\times X]$ defines a left $B(G)$-module structure on $B(G,H)$, where $G$ acts on $Z\times X$ diagonally from the left and $H$ acts only on $X$ from the right. The $B(G)$-module structure on $B(G,H)$ is made explicit for $M,K\le G$ and $\rho:K\to H$ by the product:
\begin{align}
M*(K\times\rho)=\sum_{x\in K\backslash G/M}(K\cap \prescript{x}{}M)\times\rho.
\end{align}

\subsection{Functorial operations} These operations are $\mathbb Z$-linear maps on $B(\Gamma)$. The \emph{induction} $\ind_L^\Gamma:B(L)\to B(\Gamma)$ from a subgroup $L\le\Gamma$ is defined  on $L$-sets $Y$ by 
\begin{align*}
\ind_L^\Gamma Y=\Gamma\times_L Y\text{ where }\Gamma\text{ acts by left multiplication on }\Gamma.
\end{align*}
Here $\Gamma\times_LY$ denotes the quotient of $\Gamma\times Y$ modulo the relations $
[\gamma l,y]=[\gamma, ly]$ for $\gamma\in\Gamma, l\in L$ and $y\in Y$. 

The \emph{restriction} $\res_L^\Gamma:B(\Gamma)\to B(L)$ is defined on $\Gamma$-sets $X$ by
\begin{align*}
\res_L^\Gamma X=X\text{ where }L\text{ acts on }X\text{ as a subgroup of }\Gamma.
\end{align*}
On basis elements, for $K\le L$ and $M\le\Gamma$ we have 
\begin{align}\label{mac}
\ind_L^\Gamma K=K, && \res^\Gamma_LM=\sum_{x\in L\backslash\Gamma/M}L\cap\prescript{x}{}M.
\end{align}
The \emph{inflation} $\inff_\Pi^\Gamma:B(\Pi)\to B(\Gamma)$ from a quotient $\Pi=\Gamma/N$ by a normal subgroup $N\trianglelefteq\Gamma$ is defined on $\Pi$-sets $Z$ by
\begin{align*}
\inff_\Pi^\Gamma Z=Z\text{ where } \Gamma\text{ acts on }Z\text{ via its projection in }\Pi.
\end{align*}
The \emph{deflation} $\deff_\Pi^\Gamma:B(\Gamma)\to B(\Pi)$ is given on $\Gamma$-sets $X$ by the \emph{orbit space}  of $N$
\begin{align*}
\deff_\Pi^\Gamma X=N\backslash X\text{ where }[\gamma][x]=[\gamma x]\text{ for }\gamma\in\Gamma \text{ and }x\in X.
\end{align*}
Here $[\gamma]$ denotes the coset $\gamma N$ in $\Pi$ and $[x]$ denotes the \emph{orbit} $Nx$  of $x$ in $N\backslash X$. On basis elements, for $N\trianglelefteq K \le \Gamma$ and $M\le\Gamma$ we have
\begin{align}\label{inf}
\inff_\Pi^\Gamma (K/N)=K, && \deff_\Pi^\Gamma M=NM/N.
\end{align}

\subsection{Relative Representation Modules}

Following \cite{Ser78}, the {\em rational representation ring} $R_\mathbb Q (\Gamma)$ of a finite group $\Gamma$ is the free Abelian group generated by the isomorphism classes $[V]$ of finitely generated left $\mathbb Q\Gamma$-modules $V$ modulo the relations $[V\oplus W]=[V]+[W]$ where $\oplus$ denotes the direct sum.  The product is given by $[V]*[W]=[V\otimes W]$ where $\otimes$ denotes the tensor product over $\mathbb Q$ and $V\otimes W$ is the $\mathbb Q\Gamma$-module under the diagonal $\mathbb Q\Gamma$-action. The irreducible $\mathbb Q\Gamma$-modules are direct summands $V_i$ of the group ring $\mathbb Q\Gamma$ and their isomorphism classes $[V_i]$ form a basis for $R_{\mathbb Q}(\Gamma)$.

Given a pair of finite groups $(G,H)$, let $\Gamma=G\times H$. A $(G,H)${\em-bimodule over the rationals} is simply a finitely generated left $\mathbb Q\Gamma$-module $V$ with $\mathbb QG$ acting on the left via the canonical inclusion in $\mathbb Q\Gamma$ and $\mathbb Q H$ acting on the right via the rule
\begin{align}
vh=h^{-1}v,\text{ for }v\in V\text{ and }h\in H.
\end{align}
\begin{definition}The \emph{relative rational representation module} $R(G,H)$ of  a pair of finite groups $(G,H)$ is the submodule of $R_{\mathbb Q}(\Gamma)$ generated by the isomorphism classes of $(G,H)$-bimodules over the rationals, which are \emph{right $\mathbb QH$-free} modules.\end{definition} 

We call a right $\mathbb QH$-free $(G,H)$-bimodule $V$ over the rationals {\em irreducible} if $V$ cannot be decomposed as a direct sum of right $\mathbb QH$-free $(G,H)$-bimodules over the rationals. In particular,  the isomorphism classes $[W_i]$ of irreducible right $\mathbb Q H$-free $(G,H)$-bimodules $W_i$ over the rationals form a basis for $R(G,H)$. Notice that $\mathbb QH$ with $\mathbb QH$ acting on the right by the multiplication in $H$ and $\mathbb QG$ acting on the left by the identity $1\in G$ is an example of an irreducible right $\mathbb QH$-free $(G,H)$-bimodule over the rationals which is not an irreducible left $\mathbb Q\Gamma$-module (unless $H=1$).  

If $U$ is a left $\mathbb Q G$-module and $V$ a right $\mathbb QH$-free $(G,H)$-bimodule over the rationals, then the product $[U]*[V]=[U\otimes V]$ defines a left $R_{\mathbb Q}(G)$-module structure on $R(G,H)$, where $\mathbb Q G$ acts on $U\otimes V$ diagonally from the left and $\mathbb Q H$ acts only on $V$ from the right. Indeed, with these actions, $U\otimes V$ is a right $\mathbb QH$-free $(G,H)$-bimodule.

From the relative Burnside module $B(G,H)$ to the relative rational representation module $R(G,H)$ we have the natural linear map $f_{G,H}$ defined by \eqref{map}.

\begin{definition}\label{rel} The \emph{relative Brauer relations} of a pair of finite groups $(G,H)$ are the elements of the kernel $K(G,H)$ of the linear map $f_{G,H}:B(G,H)\to R(G,H)$.
\end{definition}

Recall that the Brauer relations of the finite group $\Gamma=G\times H$ are the elements of the kernel $K(\Gamma)$ of the linear map $f_\Gamma:B(\Gamma)\to R_{\mathbb Q}(\Gamma)$ assigning to a $\Gamma$-set $X$ the rational permutation representation $\mathbb Q[X]$. The map $f_{G,H}$ is the restriction of $f_\Gamma$ to the submodule $B(G,H)\subseteq B(\Gamma)$ and its kernel is a submodule $K(G,H)\subseteq K(\Gamma)$.  

\subsection{Functorial operations} 
On the representation ring $R_{\mathbb Q}(\Gamma)$ we define functorial operations, which are $\mathbb Z$-linear maps. The \emph{induction} $\ind_L^\Gamma:R_{\mathbb Q}(L)\to R_{\mathbb Q}(\Gamma)$ from a subgroup $L\le\Gamma$ is defined  on $\mathbb Q L$-modules $W$ by 
\begin{align*}
\ind_L^\Gamma W=\mathbb Q\Gamma\otimes_{\mathbb Q L} W\text{ where }\mathbb Q\Gamma\text{ acts by left multiplication on }\mathbb Q\Gamma.
\end{align*}
The \emph{restriction} $\res_L^\Gamma:R_{\mathbb Q}(\Gamma)\to R_{\mathbb Q}(L)$ is defined on $\mathbb Q\Gamma$-modules $V$ by
\begin{align*}
\res_L^\Gamma V=V\text{ where }\mathbb QL\text{ acts on }V\text{ as a subring of }\mathbb Q\Gamma.
\end{align*}
The \emph{inflation} $\inff_\Pi^\Gamma:R_{\mathbb Q}(\Pi)\to R_{\mathbb Q}(\Gamma)$ from a quotient $\Pi=\Gamma/N$ by a normal subgroup $N\trianglelefteq\Gamma$ is defined on $\mathbb Q\Pi$-modules $U$ by
\begin{align*}
\inff_\Pi^\Gamma U=U\text{ where } \mathbb Q\Gamma\text{ acts on }U\text{ via its projection in }\mathbb Q\Pi.
\end{align*}
The \emph{deflation} $\deff_\Pi^\Gamma:R_{\mathbb Q}(\Gamma)\to R_{\mathbb Q}(\Pi)$ is given on $\mathbb Q\Gamma$-modules $V$ by 
\begin{align*}
\deff_\Pi^\Gamma V=\mathbb Q\Pi\otimes_{\mathbb Q\Gamma} V\text{ where }\mathbb Q\Gamma \text{ acts on }\mathbb Q\Pi\text{ via its projection.}
\end{align*}

\noindent{\em Notation.} For any group $C$ we denote by $1_C$ the trivial rational representation $[\mathbb Q]$ where $C$ acts on $\mathbb Q$ by the identity $1\in C$.  Also $\ind^\Gamma=\ind_C^\Gamma$ and $R(\Gamma)=R_{\mathbb Q}(\Gamma)$. Similarly, $\inff^\Gamma=\inff^\Gamma_\Pi$, $\res^\Gamma=\res^\Gamma_C$, $\deff^\Gamma=\deff^\Gamma_\Pi$ where $\Pi$ is a quotient of $\Gamma$.

\begin{proposition}[\cite{Bar15}]\label{nat}
The linear map $f_\Gamma:B(\Gamma)\to R(\Gamma)$ commutes with the operations $\ind^\Gamma$, $\inff^\Gamma$, $\res^\Gamma$, $\deff^\Gamma$. 
\end{proposition}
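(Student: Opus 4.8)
The plan is to verify the four commutativity assertions one operation at a time. Every map in sight is $\mathbb Z$-linear and $B(\Gamma),B(L),B(\Pi)$ are spanned by classes of finite sets, so in each case it suffices to fix an arbitrary finite set in the source of the operation and check that first applying the operation on sets and then $f$ gives the same isomorphism class as first applying $f$ and then the corresponding operation on rational representations. (Equivalently one could compare the basis-level formulas \eqref{mac}, \eqref{inf} with their representation-theoretic analogues, but producing explicit module isomorphisms is cleaner.) Since all of these isomorphisms will be natural in the set, they also show that $f$ is compatible with the operations as functors, not merely on basis elements. Throughout I use only that $\mathbb Q[X\sqcup Y]=\mathbb Q[X]\oplus\mathbb Q[Y]$, which makes $f$ well defined and additive.

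Restriction and inflation are immediate, since each of these operations leaves the underlying object unchanged and only alters which group (or group ring) acts, whereas $X\mapsto\mathbb Q[X]$ is constructed pointwise from the underlying set: for $L\le\Gamma$ and a $\Gamma$-set $X$ the $\mathbb Q L$-module $\res_L^\Gamma\mathbb Q[X]$ is $\mathbb Q[X]$ with $L$ permuting the standard basis, i.e.\ $\mathbb Q[\res_L^\Gamma X]$, and for $\Pi=\Gamma/N$ and a $\Pi$-set $Z$ the $\mathbb Q\Gamma$-module $\inff_\Pi^\Gamma\mathbb Q[Z]$ is $\mathbb Q[Z]$ with $\Gamma$ acting through $\Gamma\twoheadrightarrow\Pi$, i.e.\ $\mathbb Q[\inff_\Pi^\Gamma Z]$. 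For induction, given a finite $L$-set $Y$ I would write down the $\mathbb Q\Gamma$-linear map
\[
\mathbb Q\Gamma\otimes_{\mathbb Q L}\mathbb Q[Y]\longrightarrow\mathbb Q[\,\Gamma\times_L Y\,],\qquad \gamma\otimes y\longmapsto[\gamma,y],
\]
which is well defined because $[\gamma l,y]=[\gamma,ly]$ and surjective because every orbit $[\gamma,y]$ lies in the image; choosing coset representatives $\Gamma=\bigsqcup_i\gamma_i L$ identifies both sides with $\bigoplus_i\mathbb Q[Y]$, so it is an isomorphism by a dimension count, which gives $f_\Gamma\circ\ind_L^\Gamma=\ind_L^\Gamma\circ f_L$.

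The one point that needs a little care is deflation, because on the representation side it is defined by the tensor product $\mathbb Q\Pi\otimes_{\mathbb Q\Gamma}(-)$ rather than by a plain change of acting ring. Here I would first note that $\mathbb Q\Pi=\mathbb Q\Gamma/J$, where $J\trianglelefteq\mathbb Q\Gamma$ is the two-sided ideal generated by $\{n-1:n\in N\}$, so that $\deff_\Pi^\Gamma V=V/JV$ is the module $V_N$ of $N$-coinvariants; since $N\trianglelefteq\Gamma$ one checks $JV=\mathrm{span}\{\,nv-v:n\in N,\ v\in V\,\}$. Applying this with $V=\mathbb Q[X]$ for a finite $\Gamma$-set $X$, the quotient $\mathbb Q[X]/\mathrm{span}\{nx-x\}$ has the images of a transversal of $N\backslash X$ as a $\mathbb Q$-basis, and the $\Pi$-action on $\mathbb Q[X]_N$ permutes these images exactly as $\Pi$ permutes $N\backslash X$; hence $\deff_\Pi^\Gamma\mathbb Q[X]\cong\mathbb Q[N\backslash X]=\mathbb Q[\deff_\Pi^\Gamma X]$. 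Assembling the four cases and then specializing $L=C$ and $\Pi=\Gamma/N$ proves the proposition. The main (and only mild) obstacle is exactly this last bookkeeping step — identifying the coinvariants of a permutation module with the permutation module on the orbit set — everything else being formal.
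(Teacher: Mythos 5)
Your proof is correct. Note, though, that the paper does not actually prove this proposition: it is quoted from Bartel--Dokchitser \cite{Bar15}, and the only thing recorded after the statement is an illustration of the restriction case on basis elements, namely the chain $\res_L^\Gamma(f_\Gamma K)=\res_L^\Gamma\ind^\Gamma(1_K)=\sum_{x\in L\backslash\Gamma/K}\ind^L(1_{L\cap\prescript{x}{}K})=f_L(\res_L^\Gamma(K))$ obtained by matching the double-coset formula \eqref{mac} with Mackey's formula on the representation side. That is precisely the ``basis-level'' alternative you mention parenthetically and then decline to pursue. Your route --- exhibiting, for an arbitrary finite set, a natural isomorphism between the module obtained by ``operate then linearize'' and the one obtained by ``linearize then operate'' --- is more self-contained and arguably cleaner: it needs no double-coset combinatorics, treats non-transitive sets on the same footing as transitive ones, and handles deflation (where the representation-theoretic analogue of \eqref{inf} is not a standard textbook formula) by the direct identification of $N$-coinvariants of a permutation module with the permutation module on the orbit set. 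The basis-level approach that the paper gestures at buys brevity when the four combinatorial formulas are already available, but it would still owe the reader a verification of the Inf/Def analogues, which is essentially the computation you carried out. All four of your verifications check out, including the two points that need care: the dimension count $\dim\bigl(\mathbb Q\Gamma\otimes_{\mathbb Q L}\mathbb Q[Y]\bigr)=[\Gamma:L]\,|Y|=|\Gamma\times_L Y|$ for induction, and the use of normality of $N$ to rewrite $\gamma(n-1)w$ as $n'(\gamma w)-\gamma w$ so that $JV=\mathrm{span}\{nv-v\}$ for deflation.
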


For example, if $L\le\Gamma$ is a subgroup and $K$ is a basis element in $B(\Gamma)$ given by the conjugacy class of a subgroup in $\Gamma$, we have Mackey's formula \cite{Ser78} from \eqref{mac}
$$
\res_L^\Gamma(f_\Gamma K)=\res_L^\Gamma\ind^\Gamma(1_K)=\sum_{x\in L\backslash\Gamma/K}\ind^L(1_{L\cap\prescript{x}{}K})= f_L(\res_L^\Gamma(K)).
$$

However, for $\Gamma=G\times H$ these operations restricted to the submodules $B(G,H)$ and $R(G,H)$ do not land in the same submodules except in special cases. For example, if $K\le G$ is a subgroup, and $N\trianglelefteq G$ is a normal subgroup with $\Pi=G/N$, we have a commutative diagram of linear maps
\begin{equation}\label{comm}
\xymatrix{B(K,H)\ar[r]^{\ind_K^G}\ar[d]_{f_{K,H}}&B(G,H)\ar[d]_{f_{G,H}}&
\ar[l]_{\inff_{\Pi}^G}B(\Pi,H)\ar[d]^{f_{\Pi,H}}\\
R(K,H)\ar[r]^{\ind_{K}^G}&R(G,H)&\ar[l]_{\inff_{\Pi}^G}R(\Pi,H)}
\end{equation}
as $\ind_K^G=\ind_{K\times H}^\Gamma$ and $\inff_\Pi^G=\inff_{\Pi\times H}^\Gamma$ do not change the $H$-structure.
\begin{theorem}[Artin's Induction \cite{Ser78}]\label{artin} The vector space $\mathbb Q\otimes R(\Gamma)$ has a basis given by $\ind^\Gamma(1_C)$ where $C$ runs over the conjugacy classes of cyclic subgroups of $\Gamma$.
\end{theorem}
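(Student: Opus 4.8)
The plan is to verify two things: that the virtual representations $\ind^\Gamma(1_C)$, with $C$ running over the conjugacy classes of cyclic subgroups of $\Gamma$, span the $\mathbb Q$-vector space $\mathbb Q\otimes R(\Gamma)$, and that there are exactly $\dim_{\mathbb Q}\bigl(\mathbb Q\otimes R(\Gamma)\bigr)$ of them. Since a spanning set of the correct cardinality in a finite-dimensional vector space is automatically a basis, these two statements together give the theorem.

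For the count, I would pass to characters over $\mathbb C$ and use the classical description of rational representations from \cite{Ser78}: a $\mathbb C$-valued class function on $\Gamma$ is the character of a virtual $\mathbb Q\Gamma$-module if and only if it is invariant under the substitutions $g\mapsto g^j$ for every $j$ with $\gcd(j,\operatorname{ord}(g))=1$. Two elements of $\Gamma$ generate conjugate cyclic subgroups exactly when they are related, up to conjugacy, by such a substitution, so the dimension of this space of class functions — equivalently $\dim_{\mathbb Q}\bigl(\mathbb Q\otimes R(\Gamma)\bigr)$ — is the number of conjugacy classes of cyclic subgroups of $\Gamma$.

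For spanning, I would again work with characters over $\mathbb C$. Let $C_1,\dots,C_r$ represent the conjugacy classes of cyclic subgroups, with chosen generators $g_1,\dots,g_r$, and for each $i$ let $\phi_i$ be the class function that is $1$ on those $x\in\Gamma$ with $\langle x\rangle$ conjugate to $C_i$ and $0$ otherwise. Each $\phi_i$ is invariant under the substitutions above, so by the previous paragraph the $\phi_i$ form a basis of $\mathbb C\otimes R(\Gamma)$. Now $\ind^\Gamma(1_{C_j})$ is the permutation character of $\Gamma/C_j$, and its value at $g_i$ is $a_{ji}=|C_j|^{-1}\,\#\{x\in\Gamma:x^{-1}\langle g_i\rangle x\subseteq C_j\}$, so $\ind^\Gamma(1_{C_j})=\sum_i a_{ji}\phi_i$. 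The set counted by $a_{ji}$ is nonempty only if a conjugate of $C_i$ lies in $C_j$; in particular $a_{ji}=0$ whenever $|C_i|>|C_j|$, and also whenever $|C_i|=|C_j|$ with $i\neq j$, while $a_{jj}=|N_\Gamma(C_j)|/|C_j|>0$. Hence, after ordering the $C_i$ by cardinality, $(a_{ji})$ is upper-triangular with positive diagonal and therefore invertible over $\mathbb Q$. Thus each $\phi_i$ lies in the $\mathbb Q$-span of the $\ind^\Gamma(1_{C_j})$, so these span $\mathbb C\otimes R(\Gamma)$; since they already belong to $\mathbb Q\otimes R(\Gamma)$ and spanning is unaffected by base change from $\mathbb Q$ to $\mathbb C$, they span $\mathbb Q\otimes R(\Gamma)$.

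The one genuinely external ingredient is the Galois-descent description of rational characters used in the count (and to identify the $\phi_i$ as elements of $\mathbb C\otimes R(\Gamma)$); I would simply quote it from \cite{Ser78}. Granting that, the triangularity argument for spanning is elementary. An alternative route avoids computing the matrix $(a_{ji})$ explicitly: start from the classical complex form of Artin's theorem, that every character of $\Gamma$ is a $\mathbb Q$-combination of characters induced from cyclic subgroups, and then use Möbius inversion over the subgroup lattice of each cyclic subgroup to rewrite such induced characters in terms of the specific generators $\ind^\Gamma(1_C)$; combined with the rank count this again yields the basis statement.
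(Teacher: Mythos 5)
You should note that the paper does not actually prove this statement; it is quoted directly from Serre \cite{Ser78}, and the proof there is essentially the one you give: identify $\mathbb Q\otimes R(\Gamma)$ with the space of class functions constant on the sets of generators of conjugate cyclic subgroups, and observe that the matrix expressing the $\ind^\Gamma(1_{C_j})$ in the basis of indicator functions is triangular with nonzero diagonal entries $|N_\Gamma(C_j)|/|C_j|$. Your argument is correct and complete modulo the ingredient you explicitly quote. One small imprecision worth fixing: the ``if and only if'' in your characterization of characters of virtual $\mathbb Q\Gamma$-modules is false at the integral level --- for the quaternion group $Q_8$ the irreducible $2$-dimensional complex character is invariant under all substitutions $g\mapsto g^j$ yet is not the character of a virtual $\mathbb Q$-representation (only its double is, because of the Schur index). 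The correct statement, and the only one your argument actually uses, is that the invariant class functions coincide with the $\mathbb Q$-span of the rational characters, equivalently that $\dim_{\mathbb Q}\bigl(\mathbb Q\otimes R(\Gamma)\bigr)$ equals the number of conjugacy classes of cyclic subgroups; this is what \cite{Ser78} provides and it is not obtained circularly from Artin induction.
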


It is a known fact \cite{Ser78} that $\ind^\Gamma(1_C)$ do not generate $R(\Gamma)$ in general. 

\begin{corollary}\label{rank} For $\Gamma=G\times H$, the rank of the $\mathbb Z$-module $R(\Gamma)$ equals  the number of conjugacy classes of cyclic subgroups of $\Gamma$ and the rank of the $\mathbb Z$-module $K(\Gamma)$ equals the number of conjugacy classes of non-cyclic subgroups of $\Gamma$. 
\end{corollary}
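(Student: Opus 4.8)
The plan is to deduce both rank statements from Artin's Induction Theorem (Theorem~\ref{artin}) together with elementary bookkeeping for ranks of finitely generated free $\mathbb{Z}$-modules; I expect no genuine obstacle, as the content is entirely in the cited theorem. First I would treat $R(\Gamma)$. Since $R(\Gamma)$ is by construction a finitely generated free abelian group, its rank equals $\dim_{\mathbb{Q}}\bigl(\mathbb{Q}\otimes R(\Gamma)\bigr)$, and by Theorem~\ref{artin} this $\mathbb{Q}$-vector space has a basis indexed by the conjugacy classes of cyclic subgroups $C\le\Gamma$. Hence $\operatorname{rank}R(\Gamma)$ is the number of conjugacy classes of cyclic subgroups.

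For $K(\Gamma)$ I would use the tautological four-term exact sequence
\[
0\longrightarrow K(\Gamma)\longrightarrow B(\Gamma)\xrightarrow{\ f_\Gamma\ } R(\Gamma)\longrightarrow \operatorname{coker}f_\Gamma\longrightarrow 0 .
\]
By Proposition~\ref{bbasis}, $B(\Gamma)$ is free with basis the conjugacy classes of all subgroups of $\Gamma$, so $\operatorname{rank}B(\Gamma)$ is the number of conjugacy classes of subgroups. As recalled in the introduction, Artin's induction theorem forces $\operatorname{coker}f_\Gamma$ to be finite, hence $\mathbb{Q}\otimes\operatorname{coker}f_\Gamma=0$. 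Applying the exact functor $\mathbb{Q}\otimes(-)$ to the sequence above and taking the alternating sum of dimensions yields
\[
\operatorname{rank}B(\Gamma)=\operatorname{rank}K(\Gamma)+\operatorname{rank}R(\Gamma).
\]

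Combining the two computations, $\operatorname{rank}K(\Gamma)$ equals the number of conjugacy classes of subgroups of $\Gamma$ minus the number of conjugacy classes of cyclic subgroups, that is, the number of conjugacy classes of non-cyclic subgroups, since a subgroup is either cyclic or not. I would also remark that the hypothesis $\Gamma=G\times H$ plays no role here: the statement holds for any finite group and is phrased this way only to match the notation used in the sequel. The sole input beyond the results quoted in the excerpt is the finiteness of $\operatorname{coker}f_\Gamma$, which is standard and already invoked in the introduction; the remainder is pure linear algebra over $\mathbb{Z}$, so there is no real difficulty.
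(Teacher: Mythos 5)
Your proposal is correct and follows essentially the same route as the paper: Artin's Induction (Theorem~\ref{artin}) gives $\operatorname{rank}R(\Gamma)$ as the number of conjugacy classes of cyclic subgroups and forces $\operatorname{Coker}(f_\Gamma)$ to be torsion, after which the four-term exact sequence and the alternating sum of ranks yield the claim for $K(\Gamma)$. Your added remark that the hypothesis $\Gamma=G\times H$ is not actually used is accurate.
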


\begin{proof}
The conjugacy classes of subgroups $L_i\le\Gamma$ form a basis for $B(\Gamma)$ and their images under $f_\Gamma:B(\Gamma)\to R(\Gamma)$ are given by $f_\Gamma(L_i)=\ind^\Gamma(1_{L_i})$. By Artin's Induction, it follows that in the exact sequence below, $\text{\rm Coker }(f_\Gamma)$ is torsion: 
$$
0\to K(\Gamma)\to B(\Gamma)\xrightarrow{f_\Gamma} R(\Gamma)\to \text{\rm Coker } (f_\Gamma)\to 0.
$$
This concludes the proof as the alternating sum of the ranks is zero.
\end{proof}

\section{The formulation of the main theorem}

We denote by $C_p$ the multiplicative cyclic group of prime order $p$. Since the rank of the $\mathbb Z$-module $K(G)$ of Brauer relations equals the number of non-cyclic subgroups of $G$ by Corollary \ref{rank}, the first part of the following proposition is immediate. 

\begin{proposition} [\cite{Bou04}]\label{teta}
Cyclic groups have no Brauer relations except zero. Products $P=C_p\times C_p$ of cyclic groups of prime order  have one independent Brauer relation given below where $C$ runs over all non-trivial cyclic subgroups of $P$ $$ \Theta_P=1-\sum_{C=\text{ cyclic }}C+pP.$$
\end{proposition}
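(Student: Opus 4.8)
The plan is as follows. The first assertion is immediate from Corollary \ref{rank}: a cyclic group has only cyclic subgroups, so $K(G)$ has rank $0$, and being a subgroup of the free abelian group $B(G)$ it is itself free, hence trivial. For the product $P=C_p\times C_p$ I would first record that its subgroups are the trivial subgroup, the $p+1$ subgroups of order $p$, and $P$ itself, and that all of these are cyclic except $P$. By Corollary \ref{rank} the module $K(P)$ is therefore free abelian of rank $1$, so the whole task reduces to checking that the explicit element $\Theta_P$ lies in $K(P)$ and generates it.

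To see that $\Theta_P\in K(P)$ I would compute $f_P(\Theta_P)$ via permutation characters. By \eqref{mac} we have $f_P(L)=\ind^P(1_L)=\mathbb Q[P/L]$, whose character sends $g\in P$ to $[P:L]$ when $g\in L$ and to $0$ otherwise, since $P$ is abelian. Hence the character of $f_P(\Theta_P)=\mathbb Q[P]-\sum_C\mathbb Q[P/C]+p\,\mathbb Q$ at the identity is $p^2-(p+1)p+p=0$, and at a nontrivial element $g$ --- which lies in exactly one order-$p$ subgroup, namely $\langle g\rangle$ --- it is $0-p+p=0$; as a rational representation is determined by its character, $f_P(\Theta_P)=0$. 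Equivalently, one decomposes the permutation modules into the $p+2$ irreducible rational representations of $P$, namely $1_P$ and, for each order-$p$ subgroup $C$, the inflation $W_C$ of the faithful $(p-1)$-dimensional rational representation of $P/C\cong C_p$; then $\mathbb Q[P]=1_P\oplus\bigoplus_C W_C$ and $\mathbb Q[P/C]=1_P\oplus W_C$, so the alternating sum defining $f_P(\Theta_P)$ telescopes to zero.

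Finally, to see that $\Theta_P$ generates $K(P)$, I would note that in the basis of $B(P)$ indexed by (conjugacy classes of) subgroups the coefficient of the trivial subgroup in $\Theta_P$ equals $1$; thus there is a $\mathbb Z$-linear functional on $B(P)$ taking the value $1$ on $\Theta_P$, so $\Theta_P$ is a primitive vector of $B(P)$, a fortiori of the rank-one submodule $K(P)$, and a nonzero primitive element of a free $\mathbb Z$-module of rank $1$ is a generator. I do not anticipate a genuine obstacle here: the only point demanding a little care is the middle step, where the contributions of the $p+1$ subgroups of order $p$ must be balanced against the terms $1$ and $pP$ --- that is, where one must verify that $\mathbb Q[P/C]$ and $\mathbb Q[P]$ decompose as stated into rational irreducibles (equivalently, that the permutation character of $P$ acting on $P/C$ and on $P$ behaves as claimed).
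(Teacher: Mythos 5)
Your proposal is correct, and it is in fact more complete than what the paper itself provides: the paper only observes (just before the statement) that the first claim is immediate from Corollary \ref{rank}, exactly as you argue, and then cites Bouc for the rest without proof. Your verification of the second claim is sound on all three points. The rank count is right: $P=C_p\times C_p$ has the trivial subgroup, $p+1$ subgroups of order $p$, and $P$ itself as its only non-cyclic subgroup, so $K(P)$ is free of rank $1$ by Corollary \ref{rank}. The character computation is right: the permutation character of $\mathbb Q[P/L]$ at $g$ is $[P:L]$ for $g\in L$ and $0$ otherwise since $P$ is abelian, each nontrivial $g$ lies in exactly one order-$p$ subgroup (namely $\langle g\rangle$, as $P$ is elementary abelian), and virtual rational representations are determined by their characters; your alternative decomposition $\mathbb Q[P]=1_P\oplus\bigoplus_C W_C$ and $\mathbb Q[P/C]=1_P\oplus W_C$ into the $p+2$ rational irreducibles is also correct (the dimension count $1+(p+1)(p-1)=p^2$ confirms the multiplicities). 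Finally, the primitivity argument --- the coefficient of the trivial subgroup in $\Theta_P$ is $1$, so $\Theta_P$ cannot be a proper integer multiple of any element of $B(P)$, hence generates the rank-one free module $K(P)$ --- is exactly the right way to pass from ``$\Theta_P$ is a nonzero relation'' to ``$\Theta_P$ is an independent generator.'' No gaps.
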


Recall that an element $\Theta=\sum n_iH_i$ is a $G$-relation if and only if 
\begin{align*}
\sum n_i \text{ Ind}^G(1_{H_i})=0, && 1_{H_i}=\text{ trivial }H_i\text{-module }\mathbb Q.
\end{align*}
According to Proposition \ref{nat}, Brauer relations can be induced by $\ind^{G'}$ from subgroups  $H\le G'$ (or restricted by $\res_H$) and can be lifted by $\inff^{\tilde G}$ from quotients $G=\tilde{G}/N$ (or projected by $\deff_{\tilde{G}/N}$). A Brauer relation of $G$ is called {\em primitive} if cannot be induced from a proper subgroup or lifted from a proper sub-quotient of $G$. 

\begin{theorem}[Bouc-Tornehave \cite{Bou04}]\label{brauer}
All Brauer relations of a $p$-group $G$ are linear combinations of the form
\begin{align*}
\Theta=\sum n_P \text{\rm Ind}^G_K\;\text{\rm  Inf}^{K}_P(\Theta _{P}), && P=K/N\text{ sub-quotient of }G,\; n_P\in\mathbb Z
\end{align*}
of relations $\text{\rm Ind}^G_K\;\text{\rm  Inf}^{K}_P(\Theta _{P})$ {\em `indufted'}  from primitive $P$-relations $\Theta_P$ where 
\begin{enumerate}
\item $P\approx C_p\times C_p$ and  $\Theta_P=1-\sum_{C=\text{ cyclic}}C+pP$ or 
\item $P\approx $ the Heisenberg group of order $p^3$ or  $P\approx $ the dihedral group of order $2^n$ with $n\ge 4$ and
$
\Theta_P=I-IZ-J+JZ.
$
\end{enumerate}
In the second case, $Z$ is the center of $P$ and $I$, $J$ are non-conjugate subgroups of $P$ of order $p$ (or order $2$) intersecting $Z$ trivially. 
\end{theorem}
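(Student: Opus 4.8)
The strategy is to regard $G\mapsto K(G)$ as a biset subfunctor of the Burnside functor and to isolate its primitive part. By Proposition \ref{nat} the map $f_G$ commutes with $\ind$, $\inff$, $\res$ and $\deff$, so $K(G)=\ker f_G$ is stable under all four operations; consequently the submodule $K'(G)\subseteq K(G)$ generated by all indufted relations $\ind_K^G\inff_P^K(\Theta_P)$, with $P=K/N$ a \emph{proper} sub-quotient of $G$, is well defined and the inclusion $K'(G)\subseteq K(G)$ is automatic. Since $G$ is a $p$-group, $\operatorname{Coker}f_G=0$ by Ritter--Segal, so $0\to K\to B\xrightarrow{f} R\to 0$ is a short exact sequence of biset functors. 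The theorem asserts the reverse inclusion $K(G)=K'(G)$ together with the list of groups for which the primitive quotient $\partial K(G):=K(G)/K'(G)$ is nonzero. I would prove $K=K'$ by induction on $|G|$, reducing everything to the computation of $\partial K(G)$.

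For the base of the induction I would check that the three families give honest primitive relations. Writing a candidate as $\Theta=\sum_H n_H H$, membership in $K(G)$ means $\sum_H n_H\ind^G(1_H)=0$ in $R(G)$; by Artin's Induction (Theorem \ref{artin}) this is equivalent to the vanishing of the permutation character $\sum_H n_H\,\chi_{G/H}(c)$ at every cyclic $c\le G$, a finite computation of marks. For $P\approx C_p\times C_p$ this is exactly Proposition \ref{teta}. For the Heisenberg group of order $p^3$ and for $D_{2^n}$ with $n\ge 4$ one verifies that all marks of $I-IZ-J+JZ$ vanish, using that $I$ and $J$ meet the centre $Z$ trivially while $IZ$ and $JZ$ are distinct subgroups properly containing $Z$. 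Primitivity amounts to the vanishing of $\res_L^P\Theta_P$ and $\deff_{P/M}^P\Theta_P$ for every proper $L<P$ and every $1\ne M\trianglelefteq P$, again a direct check on the (short) subgroup lattices of these groups.

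The inductive step is the substance of the theorem: for every $p$-group $G$ outside the three families one must show $\partial K(G)=0$, i.e. that every relation is indufted from a proper sub-quotient. Here I would play the rank count of Corollary \ref{rank}, which identifies the rank of $K(G)$ with the number of conjugacy classes of non-cyclic subgroups of $G$, against the rank of $K'(G)$. The plan is to produce an explicit spanning family for $K'(G)$: each non-cyclic subgroup $H\le G$ is either proper, and then its own relations are indufted by the inductive hypothesis, or equal to $G$; and each proper deflation $\deff_{G/N}^G$ carries $K(G)$ into a smaller group already under control. One then argues that these indufted generators are independent modulo the lower strata and exhaust every non-cyclic class, unless $G$ carries an essential alternating configuration $(I,IZ,J,JZ)$ above a central section that is destroyed by no proper restriction or deflation.

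The main obstacle is the bookkeeping of the linear dependencies among indufted relations arising from different sub-quotients: induction and inflation interact through the double-coset formula \eqref{mac}, so a priori distinct generators can coincide or cancel, and pinning the rank of $K'(G)$ requires controlling these overlaps. I expect the crux to be a structural lemma showing that $G$ supports a relation surviving all proper restrictions and deflations only when the two-generated section $G/\Phi(G)$ has rank $2$ and the relevant central quotient is $C_p\times C_p$, so that the alternating quadruple $(I,IZ,J,JZ)$ is available. A classification of $p$-groups carrying exactly this faithful configuration then isolates the essential cases as $C_p\times C_p$, the Heisenberg group of order $p^3$, and the dihedral $2$-groups $D_{2^n}$ with $n\ge 4$, with $\partial K(G)$ one-dimensional and generated by the displayed $\Theta_G$. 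Turning the rank equality into the integral statement $K(G)=K'(G)$, ruling out torsion in $K/K'$, is the remaining delicate point, and together with the lattice classification it is where the genuine difficulty of the Tornehave--Bouc result lies.
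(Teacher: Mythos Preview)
The paper does not contain a proof of this theorem. Theorem~\ref{brauer} is stated with the attribution ``Bouc--Tornehave \cite{Bou04}'' and is quoted as a known result from the literature; it is used as input for the paper's own work on the relative module $K(G,C_p)$, not re-proved. So there is no ``paper's own proof'' to compare against.

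That said, your outline is broadly faithful to the actual Bouc--Tornehave argument: treating $K$ as a biset subfunctor of the Burnside functor, reducing to the computation of the primitive quotient $\partial K(G)$, and identifying the groups where this quotient is nonzero. You are also honest about where the real content lies, namely the integral (not merely rational) equality $K=K'$ and the classification of the essential $p$-groups. What you have written is a plausible roadmap, not a proof: the passages ``one then argues that these indufted generators are independent modulo the lower strata'' and ``I expect the crux to be a structural lemma\ldots'' are precisely where Bouc's machinery (rational $p$-biset functors, the structure of simple functors, and the explicit computation of $\partial K$ via genetic bases) does serious work that your sketch does not supply. If you intend to fill this in, you will need that machinery or an equivalent; the rank count from Corollary~\ref{rank} alone does not control torsion in $K/K'$, as you yourself note.
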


Let $G$ be a finite $p$-group and $K(G,C_p)$ be the module of relative $(G,C_p)$-Brauer relations as in Definition \ref{rel}. In this case we have the following commutative diagram of short exact sequences of $\mathbb Z$-modules and linear maps
\begin{equation}\label{exactseq}
\begin{CD}
0@>>>K(G\times C_p)@>incl.>>B(G\times C_p)@>f_{G\times C_p}>>R(G\times C_p)@>>>0\\
@.@A incl. AA @A incl. AA @A incl. AA@.\\
0@>>>K(G, C_p)@>incl.>>B(G, C_p)@>f_{G,C_p}>>R(G, C_p)@>>>0
\end{CD}
\end{equation}

The surjectivity of the maps $f_{G\times C_p}$ and $f_{G,C_p}$ has been established in \cite{Seg72} and \cite{Ant06}. By estimating the ranks of the $\mathbb Z$-modules involved in the diagram and searching for Brauer $P$-relations $\Theta'_P$ for sub-quotients $P=K/N$ of the group $G\times C_p$ such that $\ind_K^{G\times C_p}\inff^K_P(\Theta'_P)$ are relative Brauer $(G,C_p)$-relations, we formulate the following   

\begin{conjecture}\label{conj}
Let $p$ be a prime and $G$ a finite $p$-group. 
 All relative Brauer $(G,C_p)$-relations for a $p$-group $G$ are linear combinations 
\begin{align*}
\Theta=\sum n_P\text{\rm Ind}^{G\times C_p}_K\;\text{\rm  Inf}^{K}_P(\Theta' _{P}), && P=K/N\text{ sub-quotient of }G\times C_p,\; n_P\in\mathbb Z
\end{align*}
of $(G,C_p)$-relations $\text{\rm Ind}^{G\times C_p}_K\;\text{\rm  Inf}^{K}_P(\Theta' _{P})$ {\em `indufted'}  from $P$-relations $\Theta'_P$ where 
\begin{enumerate}
\item $P\approx C_p\times C_p\times C_p$ or 
\item $P\approx $ (the Heisenberg group of order $p^3$) $\times\; C_p$ or 
\item $P\approx$ (the dihedral group of order $2^n$ with $n\ge 4$) $\times\; C_2$.
\end{enumerate}
\end{conjecture}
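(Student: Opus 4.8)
The plan is to prove the two inclusions between $K(G,C_p)$ and the submodule spanned by the indufted relations of the three listed types, organised around the identification $K(G,C_p)=K(G\times C_p)\cap B(G,C_p)$ coming from diagram \eqref{exactseq}. In each of the three types the distinguished $C_p$ (respectively $C_2$) factor of $P$ is matched with the right-hand factor $H=C_p$ of $\Gamma=G\times C_p$, so that $P\approx Q_0\times C_p$ with the complementary factor $Q_0\in\{C_p\times C_p,\ \mathrm{Heisenberg}_{p^3},\ D_{2^n}\}$ a sub-quotient of the \emph{left} group $G$ carrying a primitive Brauer relation in the sense of Theorem \ref{brauer}.

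For the forward inclusion I would show that each generator lies in $K(G,C_p)$. Writing $P\approx Q_0\times C_p$, I take $\Theta'_P$ to be a right-$C_p$-free $P$-relation, i.e. an element of $K(Q_0,C_p)=K(P)\cap B(Q_0,C_p)$; such relations exist precisely because the direct $C_p$ factor supplies a free right action. Since $\Theta'_P\in K(P)$ and $f$ commutes with induction and inflation by Proposition \ref{nat}, the indufted element lies in $K(G\times C_p)$; and because the induction and inflation involved touch only the left factor (the observation after diagram \eqref{comm} that induction and inflation from the left factor do not change the $H$-structure), the commuting squares \eqref{comm} place it in $B(G,C_p)$ as well. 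Hence it belongs to $K(G,C_p)$.

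For the reverse inclusion I would start from Bouc--Tornehave (Theorem \ref{brauer}) applied to $\Gamma=G\times C_p$, which spans $K(\Gamma)$ by relations indufted from sub-quotients $Q=K/N$ of $\Gamma$ of the three primitive types $C_p\times C_p$, $\mathrm{Heisenberg}_{p^3}$, $D_{2^n}$, and then cut down to the relative part. The key structural step is to show that such a generator is right-$C_p$-free exactly when the central subgroup $1\times C_p$ survives in the sub-quotient as a direct factor: if $1\times C_p\subseteq N$ then every subgroup occurring in the relation contains $1\times C_p$ and the relation is not $C_p$-free, whereas if $1\times C_p$ injects into $Q$ then, being central in $\Gamma$, it splits off and forces $Q\approx Q_0\times C_p$ with $Q_0$ of one of the three primitive types, producing exactly the sub-quotients $C_p^3$, $\mathrm{Heisenberg}\times C_p$, $D_{2^n}\times C_2$ of the statement. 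That these generators exhaust $K(G,C_p)$ — including those relative relations that arise only as integral combinations of individually non-relative Bouc--Tornehave generators — is then forced by a rank count: from the short exact sequence $0\to K(G,C_p)\to B(G,C_p)\to R(G,C_p)\to 0$, valid since $f_{G,C_p}$ is onto by \cite{Ant06}, one gets $\operatorname{rank}K(G,C_p)=\operatorname{rank}B(G,C_p)-\operatorname{rank}R(G,C_p)$, where the first rank counts conjugacy classes of graphs $K\times\rho$ by Proposition \ref{rbbasis} and the second is supplied by the rank lemmas enumerating the irreducible right-$\mathbb Q C_p$-free $(G,C_p)$-bimodules. Matching this rank with the number of independent indufted generators of the three types gives the result rationally, after which I would upgrade to the integral statement by reducing to a set of special generators and checking that the lattice they span has index one, using the explicit shapes $1-\sum C+pP$ and $I-IZ-J+JZ$ of the primitive relations.

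The main obstacle is the computation of $\operatorname{rank}R(G,C_p)$ for \emph{non-abelian} $G$. In the abelian case of Theorem \ref{mainb} only one-dimensional rational characters intervene and the Heisenberg and dihedral types never arise; for a general $p$-group the irreducible rational representations of $G\times C_p$ that are right-$\mathbb Q C_p$-free are governed by non-abelian $\mathbb Q$-irreducibles with larger character fields and, when $p=2$, possibly nontrivial Schur indices, so the enumeration behind the rank lemma is considerably more delicate. A secondary difficulty is the integrality step: proving that the three families together span the full lattice $K(G,C_p)$ and not merely a finite-index sublattice requires a careful interleaving of the $C_p^3$, $\mathrm{Heisenberg}\times C_p$ and $D_{2^n}\times C_2$ generators, since a single non-$C_p$-free Bouc--Tornehave generator may become relative only after several primitive types are combined.
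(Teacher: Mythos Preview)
The statement you are attempting is Conjecture~\ref{conj}, which the paper does \emph{not} prove in general; only the abelian case (Theorem~\ref{mainbb}) is established, via \S\S4--7. So there is no paper proof to compare against, and your outline is a sketch for an open problem.

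Beyond that, your ``key structural step'' is miscast. For a Bouc--Tornehave generator $\text{Induf}(\Theta_{K/N})$ with $K/N$ one of the primitive types $C_p\times C_p$, Heisenberg, $D_{2^n}$, the element lies in $B(G,C_p)$ exactly when the top subgroup $K$ is a graph, i.e.\ $K\cap(1\times C_p)=1$; in that case $1\times C_p$ does not meet $K$ at all, so it certainly does not ``survive as a direct factor'' of $K/N$. Conversely, when $1\times C_p\le K$ and injects into $K/N$, the term $K$ itself is \emph{not} a graph and the generator is \emph{not} in $B(G,C_p)$. (And even if it did inject, a central $C_p$ in the Heisenberg group or in $D_{2^n}$ does not split off, so your splitting claim fails there too.) In the paper's abelian analysis these are precisely the Type~1 versus Type~2/Type~3 generators of \S6: Type~1 (graph $K$) lies in $B(G,C_p)$ and is rewritten as indufted from a $C_p^3$ sub-quotient by \emph{enlarging} $K$ to $G'\times C_p$ (Lemma~\ref{type1}), while Types~2 and~3 individually lie outside $B(G,C_p)$ but enter relative relations in combinations. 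Showing that all such combinations are again indufted from $C_p^3$---the integrality step you defer to ``checking that the lattice has index one''---is the whole content of \S\S6--7 (the reduction to Type~2 and the signature/elimination algorithm), and no analogue of this is known for non-abelian $G$. Your rank strategy for the rational statement is in the spirit of \S\S4--5, but as you yourself note, computing $\operatorname{rank}R(G,C_p)$ beyond the abelian case is already a serious obstacle.
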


In \cite{Kah13} this conjecture was proved for $G$ an elementary Abelian $p$-group by giving an explicit description of $K(G,C_p)$. The simplest example shows an intricate network of subgroups behind the relative Brauer relations.  

\begin{proposition}[Kahn \cite{Kah13}]\label{kahn} $K(C_2\times C_2,C_2)$ has a basis with four elements $e_1-e_3-e_5-e_7+2e_{12}$, $e_3-e_{12}-e_{13}-e_4+e_{14}+e_{15}$, $e_5-e_{12}-e_{14}-e_6+e_{13}+e_{15}$, $e_7-e_{12}-e_{15}-e_8+e_{13}+e_{14}$ where $e$'s label distinct subgroups of $C_2\times C_2\times C_2$.
\end{proposition}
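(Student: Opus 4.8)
The plan is to realise $K(C_2\times C_2,C_2)$ as the intersection $K(\Gamma)\cap B(G,C_2)$ inside $B(\Gamma)$, where $G=C_2\times C_2$, $H=C_2$ and $\Gamma=G\times H\iso C_2\times C_2\times C_2$, and then to pin down this lattice by a rank count together with an explicit verification. First I would fix coordinates $\Gamma=\langle a\rangle\times\langle b\rangle\times\langle c\rangle$ with $H=\langle c\rangle$ and record, via Proposition \ref{rbbasis}, that a $\mathbb{Z}$-basis of $B(G,C_2)\subseteq B(\Gamma)$ is given by those subgroups $L\le\Gamma$ meeting $H$ trivially, i.e.\ the graph subgroups $L$ with $c\notin L$; a quick count gives eleven of them (the trivial subgroup, six of order $2$, and four of order $4$). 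Since $R(G,C_2)\subseteq R(\Gamma)$ is torsion-free, the kernel $K(G,C_2)=\ker f_{G,C_2}$ is a saturated (primitive) sublattice of $B(G,C_2)\iso\mathbb{Z}^{11}$, and by the surjectivity of $f_{G,C_2}$ from \cite{Ant06} (cf.\ diagram \eqref{exactseq}) its rank equals $11-\operatorname{rank}R(G,C_2)$.

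Next I would compute $\operatorname{rank}R(G,C_2)=7$ in two halves. For the upper bound, if $L$ is a graph subgroup then $H$ acts freely on the right on $\Gamma/L$ (as $\Gamma$ is abelian and $L\cap H=1$), so $f_{G,C_2}(L)=\mathbb{Q}[\Gamma/L]$ is a free right $\mathbb{Q}H$-module; in terms of the eight rational characters of $\Gamma$ this forces the total multiplicity of the characters trivial on $H$ to equal that of the characters nontrivial on $H$, so $R(G,C_2)$ lies in a fixed rank-$7$ hyperplane of $R(\Gamma)\iso\mathbb{Z}^{8}$, giving $\operatorname{rank}K(G,C_2)\ge 4$. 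For the matching lower bound I would exhibit seven graph subgroups whose permutation representations $\mathbb{Q}[\Gamma/L]$ are $\mathbb{Q}$-independent — for instance the trivial subgroup, the four graphs of order $4$, and two of the order-$2$ graphs — which reduces to a single $7\times 7$ determinant check in the character lattice. Hence $\operatorname{rank}K(G,C_2)=4$.

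Finally I would verify the four displayed elements and show they form a basis. Each of the four is supported on graph subgroups, hence lies in $B(G,C_2)$; each has identically vanishing virtual permutation character on $\Gamma$ — equivalently, each is an integral combination of the `indufted' relations $\ind_K^{\Gamma}\inff_P^{K}(\Theta_P)$ with $P\iso C_2\times C_2$ a sub-quotient of $\Gamma$, as provided by Theorem \ref{brauer} — so each lies in $K(\Gamma)$, and therefore in $K(G,C_2)=K(\Gamma)\cap B(G,C_2)$. The four vectors are $\mathbb{Z}$-independent, and in fact extend to a $\mathbb{Z}$-basis of $B(G,C_2)$: on the four columns indexed by $e_1,e_4,e_6,e_8$ (each of which occurs in exactly one of the four vectors) the coefficient matrix is diagonal with entries $\pm1$, so the four vectors span a saturated rank-$4$ sublattice of $B(G,C_2)$. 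Since $K(G,C_2)$ is itself saturated of rank $4$ and contains this sublattice, the two coincide, which proves the claim.

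The step I expect to be the main obstacle is the upper bound $\operatorname{rank}K(G,C_2)\le 4$ — i.e.\ confirming that the eleven permutation bimodules $\mathbb{Q}[\Gamma/L]$ already exhaust a full rank-$7$ sublattice of the hyperplane cut out by the $\mathbb{Q}H$-freeness condition. This is an explicit but unavoidable linear-algebra computation in the character lattice of $C_2^3$; by contrast the lower bound via the freeness constraint, the character-vanishing verification, and the saturation argument identifying the $\mathbb{Z}$-span of the four given vectors with $K(G,C_2)$ are all routine.
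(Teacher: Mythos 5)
Your argument is correct, and its skeleton (compute the rank of $K(C_2\times C_2,C_2)$, exhibit four explicit relations, conclude by saturation) is the same one the paper follows: the proposition itself is only cited from Kahn, but Section~8 of the paper re-derives it, and your four vectors are exactly $E_{12}$, $E_3-E_4$, $E_5-E_6$, $E_7-E_8$ in the notation there, so the character-vanishing check you defer is immediate from Proposition~\ref{teta} and diagram \eqref{comm}. Where you genuinely diverge is the rank computation. The paper obtains $\operatorname{rank}R(G\times C_2)-\operatorname{rank}R(G,C_2)=1$ from Lemma~\ref{rranks}, counts cyclic subgroups via Corollary~\ref{rank} (Artin induction), and feeds this into Proposition~\ref{key}; you instead work directly in the character lattice of $C_2^3$, cutting out a rank-$7$ hyperplane by the $\mathbb{Q}C_2$-freeness condition (equal total multiplicities of the characters trivial and nontrivial on $H$) and exhibiting seven permutation bimodules with independent characters. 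For $p=2$ your hyperplane is precisely the elementary content of Lemma~\ref{rranks}, and your explicit independence check replaces the appeal to surjectivity of $f_{G,C_2}$ from \cite{Ant06}; both deferred computations do go through (the images of the trivial subgroup, the four index-$2$ graphs, and the two order-$2$ graphs $(1\times C_2)\times\epsilon$, $(C_2\times 1)\times\epsilon$ are independent in $R(C_2^3)\iso\mathbb{Z}^8$). Your final step is also sound: the $4\times 4$ minor on the columns $e_1,e_4,e_6,e_8$ is $\pm 1$, so the span of the four vectors is a direct summand of $B(G,C_2)\iso\mathbb{Z}^{11}$, and since $K(G,C_2)$ is saturated (the quotient embeds in the torsion-free module $R(G,C_2)$) of the same rank $4$, the two lattices coincide. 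The trade-off is that your route is self-contained and elementary but specific to this small example, while the paper's rank lemmas are what generalize to arbitrary Abelian $p$-groups.
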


In this paper we prove Conjecture \ref{conj} for $G$ any finite Abelian $p$-group by giving the following description of $K(G,C_p)$:

\begin{theorem} \label{mainbb}
Let $p$ be a prime and $G$ a finite Abelian $p$-group. The $\mathbb Z$-module $K(G,C_p)$ of relative Brauer $(G,C_p)$-relations is generated by elements of the form $\text{\rm Ind}^{G\times C_p}_{K}\;\text{\rm  Inf}^{K}_P(\Theta' _{P})$ where $P=K/N\approx C_p\times C_p\times C_p$ are sub-quotients of $G\times C_p$ and $\Theta'_P$ are elements of $K(P)$, the module of Brauer $P$-relations.
\end{theorem}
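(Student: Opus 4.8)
The plan is to combine the Bouc--Tornehave classification of $K(\Gamma)$ for $\Gamma=G\times C_p$ with the commuting square \eqref{comm}, induct on $|G|$, and identify the ``new'' relations at each stage as indufted $C_p^3$-relations. Write $Z=\{1\}\times C_p\le\Gamma$ and call $L\le\Gamma$ a \emph{graph subgroup} if $L\cap Z=1$; by Proposition \ref{rbbasis} the graph subgroups index a $\mathbb Z$-basis of $B(G,C_p)\subseteq B(\Gamma)$. First I would record a rank computation: the only cyclic subgroup of $\Gamma$ containing $Z$ is $Z$ itself (an easy check, using that $G$ is a $p$-group), so by Theorem \ref{artin} the space $\mathbb Q\otimes R(G,C_p)=\mathrm{span}\{\mathrm{Ind}^\Gamma(1_C):C\text{ a cyclic graph subgroup}\}$ is spanned by part of the Artin basis of $\mathbb Q\otimes R(\Gamma)$; together with the surjectivity of $f_{G,C_p}$ (\cite{Ant06}) this yields
$$\mathrm{rank}\,K(G,C_p)=\#\{\text{non-cyclic graph subgroups of }\Gamma\}=\sum_{\substack{K\le G\\ K\text{ non-cyclic}}}|\mathrm{Hom}(K,C_p)|.$$
Since $K(G,C_p)$ is a submodule of the free module $K(\Gamma)$, it suffices to exhibit that many $\mathbb Z$-independent indufted $C_p^3$-relations lying in $B(G,C_p)$ and then to prove they actually generate, not merely span a finite-index submodule; this is the split between §4--§5 and §6--§7.

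Since $G$, hence $\Gamma$, is abelian, $\Gamma$ has no Heisenberg or dihedral sub-quotient, so Theorem \ref{brauer} says $K(\Gamma)$ is $\mathbb Z$-spanned by the basic relations
$$\Theta_{L,N}:=N-\sum_{\substack{N\le C\le L\\ [C:N]=p}}C+pL,\qquad L/N\cong C_p^3\ \text{replaced by}\ C_p\times C_p .$$
The structural lemma I would prove is that \emph{whenever $L$ is a graph subgroup, $\Theta_{L,N}$ is itself an indufted $C_p^3$-relation lying in $B(G,C_p)$}. Indeed $L^p\le N$; if $\mathrm{rank}(L)\ge 3$ one shrinks $N$ to some $\tilde N$ with $L^p\le\tilde N\le N$ and $[N:\tilde N]=p$, so that $L/\tilde N\cong C_p^3$ and $\Theta_{L,N}=\mathrm{Ind}^\Gamma_{L}\mathrm{Inf}^{L}_{L/\tilde N}(\Theta_{L/\tilde N,\,N/\tilde N})$; if $\mathrm{rank}(L)=2$ one instead takes $\tilde L=LZ$ and $\tilde N=(LZ)^p=L^p$, for which $(LZ)/(LZ)^p\cong C_p^3$ and $\Theta_{L,N}=\mathrm{Ind}^\Gamma_{LZ}\mathrm{Inf}^{LZ}_{(LZ)/(LZ)^p}(\Theta_{\overline L,\overline N})$, bars denoting reduction modulo $(LZ)^p$. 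In either case the inflated relation is supported on $N$, $L$ and the subgroups strictly between them, none containing $Z$, so it lies in $B(G,C_p)$. The same device shows that certain differences $\Theta_{L_1,N}-\Theta_{L_2,N}$ with $L_1Z=L_2Z=:M$ and $\mathrm{rank}(M)=3$ — in which the non-graph terms $pL_i$ and the unique $Z$-containing intermediate subgroup cancel — are indufted $C_p^3$-relations in $B(G,C_p)$, being equal to $\mathrm{Ind}^\Gamma_M\mathrm{Inf}^M_{M/M^p}(\Theta_{\overline{L_1},\overline N}-\Theta_{\overline{L_2},\overline N})$.

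Armed with these, §5 becomes a counting argument: taking one suitable $\Theta_{L,N}$ for each non-cyclic graph subgroup $L$, supplemented by enough of the difference relations, produces $\mathrm{rank}\,K(G,C_p)$ many independent indufted $C_p^3$-relations in $B(G,C_p)$, which proves the theorem up to torsion. For §6 I would use the $B(G)$-module structure on $B(G,C_p)$ and the square \eqref{comm} to push everything induced from a proper subgroup $K<G$ or inflated from a proper quotient $G/N$ into the inductive hypothesis on $|G|$ — such images remain indufted $C_p^3$-relations by transitivity of $\mathrm{Ind}$ and $\mathrm{Inf}$ — thereby reducing to a short explicit list of ``primitive'' generators of $K(G,C_p)$: the graph-type $\Theta_{L,N}$ with $\pi_G(L)=G$ together with a controlled family of mixed differences, attached to the $C_p^3$ sub-quotients of $\Gamma$ that do not descend to a proper sub-quotient of $G$. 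Section §7 then verifies that this list genuinely $\mathbb Z$-generates $K(G,C_p)$, by ordering the non-cyclic graph subgroups and checking that the chosen relations form a triangular system with $\pm1$ diagonal entries.

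The step I expect to be the main obstacle is §7: upgrading ``finite index'' to ``equality''. Each naive generator $\Theta_{L,N}$ carries the factor $p$ on its top term $pL$, so on its own it only produces a subgroup whose index is a power of $p$; one must combine several of the $2(p^2+p+1)$ standard generators $\Theta_{U,V}$ of $K(C_p^3)$ — a module of rank only $p^2+p+2$, hence with abundant syzygies — to manufacture elements whose leading coefficient at a prescribed non-cyclic graph subgroup is a unit, \emph{while} keeping every subgroup that occurs a graph subgroup so as not to leave $B(G,C_p)$. Carrying out this bookkeeping uniformly in the isomorphism type of $G$ is the delicate point, and is presumably what the ``rank lemmas'' of §4 are designed to streamline.
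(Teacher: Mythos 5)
Your rank computation and your ``generators up to torsion'' step are sound and essentially match \S 4--\S 5 of the paper: the rank of $K(G,C_p)$ is indeed the number of non-cyclic graph subgroups of $\Gamma=G\times C_p$, and the Bouc--Tornehave generators $\Theta_{L,N}$ supported on a graph subgroup $L$ (the paper's Type~1 elements $A_{G',L,\alpha}$) are indufted from $C_p^3$ sub-quotients exactly by the $LZ$ device you describe. This gives $px\in K'(G,C_p)$ for every $x\in K(G,C_p)$, i.e.\ the theorem up to $p$-torsion.

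The genuine gap is the integral step, which you yourself flag as ``the main obstacle'' and then only sketch. Your plan --- order the non-cyclic graph subgroups and produce a triangular system with $\pm1$ diagonal entries --- cannot work as stated, for the reason you already give: every indufted generator has leading coefficient $p$ on its top term $pL$, so any $\mathbb Z$-combination of them has leading coefficients divisible by $p$ at the maximal subgroups in its support, and no amount of syzygy bookkeeping inside $K(C_p^3)$ manufactures a unit there. Moreover, a general $x\in K(G,C_p)$ is a $\mathbb Z$-combination of \emph{all} Bouc--Tornehave generators of $K(\Gamma)$, including those $\Theta_{K,N}$ with $K=G'\times C_p$ not a graph (the paper's Type~2 elements $B_{G',C,\beta}$, with $N$ a graph, and Type~3 elements $D_{G',L}$, with $N=L\times C_p$); your structural lemma says nothing about these, and your ``difference relations'' $\Theta_{L_1,N}-\Theta_{L_2,N}$ are too vaguely specified to absorb them. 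The paper's resolution is different: it proves $(p+1)D_{G',L}\equiv\sum B_{G',C',\epsilon}-\sum B_{C',L,\epsilon}\bmod K'(G,C_p)$, so that the identity $x=(p+1)x-px$ reduces any $x$ to a pure Type~2 combination modulo $K'(G,C_p)$; it then introduces the signature homomorphism $\sigma:B(\Gamma)\to B(G)$, $L\times C_p\mapsto L$, whose kernel is exactly $B(G,C_p)$, rewrites the Type~2 generators along resolutions $L=G_e<\dots<G_0=G$ (the ambiguity between two resolutions lying in $K'(G,C_p)$ by a $C_p^3$-induftion), and observes that the resulting signatures $-X+p^iX_0$ are linearly independent in $B(G)$. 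Since $x\in B(G,C_p)=\ker\sigma$, all Type~2 coefficients must vanish, whence $x\in K'(G,C_p)$. Without some substitute for this $(p+1)x-px$ trick and the signature argument, your outline does not close.
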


\section{Rank lemmas and their proofs}

In this section we fix $G$ to be a finite Abelian $p$-group and endow the cyclotomic field $\mathbb Q(\zeta)$  with $\zeta$ a fixed primitive $p$-root of unity adjoined by the $(G,C_p)$-bimodule structure where $C_p$-action is given by multiplication by $\zeta$ and the $G$-action is {\em trivial}. 

Notice that a $(G,C_p)$-bimodule $W$ over the rationals is a right $\mathbb Q[C_p]$-\emph{free} module if and only if the module 
\begin{equation}\label{forget}
\empty_0W={\rm Res}_{1\times C_p} W
 \end{equation}
with the $G$-action forgotten is a left $\mathbb Q[C_p]$-free module.

\begin{lemma}\label{one}
$1_{G\times C_p}\oplus\mathbb Q(\zeta)$ represents an element in $R(G,C_p)$.
\end{lemma}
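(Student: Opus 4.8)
The plan is to verify directly from the definitions that
$W := 1_{G\times C_p}\oplus\mathbb Q(\zeta)$ is itself a right $\mathbb Q[C_p]$-free $(G,C_p)$-bimodule over the rationals; once this is done, its class $[W]=1_{G\times C_p}+\mathbb Q(\zeta)$ lies in the submodule $R(G,C_p)\subseteq R(G\times C_p)$ by the very definition of $R(G,C_p)$. First I would observe that both $1_{G\times C_p}$ and $\mathbb Q(\zeta)$ are finitely generated left $\mathbb Q[G\times C_p]$-modules on which $G$ acts trivially, so $W$ is the inflation along the projection $G\times C_p\twoheadrightarrow C_p$ of the $\mathbb Q[C_p]$-module $\mathbb Q\oplus\mathbb Q(\zeta)$, where $\mathbb Q$ carries the trivial action and $\mathbb Q(\zeta)$ the action of a generator by $\zeta$.

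Next I would apply the criterion recorded around \eqref{forget}: $W$ is right $\mathbb Q[C_p]$-free if and only if ${}_0W=\res_{1\times C_p}W$ is left $\mathbb Q[C_p]$-free. Since the composite $1\times C_p\hookrightarrow G\times C_p\twoheadrightarrow C_p$ is the identity of $C_p$, we get ${}_0W\cong\mathbb Q\oplus\mathbb Q(\zeta)$ as $\mathbb Q[C_p]$-modules. The key point is then the standard decomposition of the group algebra: identifying $\mathbb Q[C_p]\cong\mathbb Q[t]/(t^p-1)$ and factoring $t^p-1=(t-1)\Phi_p(t)$ into coprime factors over $\mathbb Q$, the Chinese Remainder Theorem gives a ring isomorphism $\mathbb Q[C_p]\cong\mathbb Q\times\mathbb Q(\zeta)$ in which the first factor is the trivial module $1_{C_p}$ and the second is $\mathbb Q(\zeta)$ with a generator acting by $\zeta$. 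Hence ${}_0W\cong\mathbb Q[C_p]$ is free of rank one over $\mathbb Q[C_p]$, so $W$ is right $\mathbb Q[C_p]$-free, and therefore $1_{G\times C_p}+\mathbb Q(\zeta)=[W]\in R(G,C_p)$.

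There is essentially no obstacle here beyond this bookkeeping; the only subtlety worth a word is that the right $C_p$-action on a bimodule is, via the rule $vh=h^{-1}v$, the left $C_p$-action twisted by inversion, so a priori ``right $\mathbb Q[C_p]$-free'' and ``left $\mathbb Q[C_p]$-free'' could differ. They do not, because $\mathbb Q[C_p]$ is commutative and $t\mapsto t^{-1}$ is a ring automorphism carrying the regular module to itself; this is precisely what is asserted in the equivalence surrounding \eqref{forget}, which I take as given.
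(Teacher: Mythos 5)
Your proof is correct and follows essentially the same route as the paper's: both reduce to the forgetful criterion around \eqref{forget} and identify ${}_0\bigl(1_{G\times C_p}\oplus\mathbb Q(\zeta)\bigr)\cong \mathbb Q\times\mathbb Q(\zeta)\cong\mathbb Q[C_p]$ as the regular (hence free) module. Your added remarks on the Chinese Remainder decomposition and the left/right twist by inversion merely spell out details the paper leaves implicit.
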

\begin{proof}
We have the following $\mathbb Q[C_p]$-isomorphism
$$
\empty_0\left(1_{G\times C_p}\oplus \mathbb Q(\zeta)\right)=1_{C_p}\oplus \mathbb Q(\zeta)\approx \mathbb Q\times \mathbb Q(\zeta)=\mathbb Q[C_p].
$$
This concludes the proof. \end{proof}

\begin{lemma}\label{rranks}
$R(G\times C_p)=R(G,C_p)\oplus \mathbb Z\cdot 1_{G\times C_p}$.
\end{lemma}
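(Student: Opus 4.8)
The plan is to show that $R(G,C_p)$, viewed as a submodule of $R(G\times C_p)$, is exactly the complement of the rank-one summand spanned by the trivial representation $1_{G\times C_p}$. First I would exploit the structure of $\mathbb Q[G\times C_p]$-modules: since $G\times C_p$ is Abelian, the irreducible rational representations are indexed by Galois orbits of characters, i.e. by cyclic quotients of $G\times C_p$. Concretely, writing $\chi$ for a character of $G\times C_p$ with image a cyclic group of order $n$, the associated irreducible $\mathbb Q[G\times C_p]$-module $V_\chi$ has $\dim_{\mathbb Q}V_\chi=\phi(n)$, and the $C_p$-action on $V_\chi$ is either trivial (when $\chi$ restricts trivially to $1\times C_p$) or faithful on $C_p$. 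In the first case $\mathrm{Res}_{1\times C_p}V_\chi$ is a trivial $\mathbb Q[C_p]$-module, hence never free unless $V_\chi=0$; in the second case $\mathrm{Res}_{1\times C_p}V_\chi$ is a sum of copies of $\mathbb Q(\zeta)$, hence a free $\mathbb Q[C_p]$-module.

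Next I would analyze which $\mathbb Z$-linear combinations $\sum a_\chi[V_\chi]$ lie in $R(G,C_p)$, using the characterization \eqref{forget}: a virtual module is represented by a right $\mathbb Q[C_p]$-free bimodule precisely when, after forgetting the $G$-action, it becomes $\mathbb Q[C_p]$-free, i.e. a non-negative multiple of $[\mathbb Q[C_p]]=[1_{C_p}]+[\mathbb Q(\zeta)]$ in $R(C_p)$. Since $R(C_p)=\mathbb Z[1_{C_p}]\oplus\mathbb Z[\mathbb Q(\zeta)]$ is free of rank two, the condition $\mathrm{Res}_{1\times C_p}(\sum a_\chi[V_\chi])\in\mathbb Z_{\ge 0}\cdot[\mathbb Q[C_p]]$ forces the coefficient of $[1_{C_p}]$ to equal the coefficient of $[\mathbb Q(\zeta)]$; the former picks out exactly $\sum_{\chi\colon C_p\text{ acts trivially}}a_\chi\dim V_\chi$ and the latter $\sum_{\chi\colon C_p\text{ acts faithfully}}a_\chi\dim V_\chi/(p-1)$ (up to the appropriate bookkeeping). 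Taken together with Lemma \ref{one}, which supplies the ``diagonal'' generator $1_{G\times C_p}\oplus\mathbb Q(\zeta)$ whose restriction is precisely $\mathbb Q[C_p]$, this shows that $R(G,C_p)$ contains all differences $[V_\chi]-[V_{\chi'}]$ of pairs with the appropriate dimension balance, together with the single generator from Lemma \ref{one}; and that it misses exactly the direction of $[1_{G\times C_p}]$.

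To finish, I would assemble a basis: the element $1_{G\times C_p}\oplus\mathbb Q(\zeta)$ from Lemma \ref{one}, the classes $\mathbb Q(\zeta)\otimes V$ (i.e. twists by the faithful $C_p$-character) which are automatically $\mathbb Q[C_p]$-free and account for the $C_p$-faithful irreducibles of $G\times C_p\cong G\times C_p$, and a spanning set of ``balanced differences'' among the $C_p$-trivial irreducibles $V_\chi$ (which are the irreducibles of $G$). A rank count — the rank of $R(G\times C_p)$ is the number of cyclic subgroups of $G\times C_p$ by Corollary \ref{rank}, and splits as (cyclic subgroups landing in $G$) plus (the rest) — then matches the rank of $R(G,C_p)\oplus\mathbb Z\cdot 1_{G\times C_p}$, and since $1_{G\times C_p}$ is not $\mathbb Q[C_p]$-free its class is not in $R(G,C_p)$, giving the direct sum decomposition. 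The main obstacle I anticipate is the bookkeeping in the middle step: correctly pinning down, for an arbitrary virtual combination of rational irreducibles of the Abelian group $G\times C_p$, the precise numerical constraint that its $C_p$-restriction be a non-negative integer multiple of $[\mathbb Q[C_p]]$, and then verifying that this constraint cuts out a submodule of the right corank one inside $R(G\times C_p)$ rather than merely a finite-index subgroup of it — i.e. checking the splitting is as $\mathbb Z$-modules, not just rationally.
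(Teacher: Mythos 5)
Your overall strategy---decompose $R(G\times C_p)$ into rational irreducibles, split them according to whether $C_p$ acts trivially or faithfully, and use Lemma \ref{one} to trade copies of $\mathbb Q(\zeta)$ against copies of $1_{G\times C_p}$---is the same as the paper's. But there are two genuine problems in the middle of your argument. First, the claim that the $C_p$-faithful irreducibles (your ``$\mathbb Q(\zeta)\otimes V$'') are ``automatically $\mathbb Q[C_p]$-free'' is false: a right $\mathbb Q[C_p]$-free module restricts to $1\times C_p$ as a multiple of the regular representation $[1_{C_p}]+[\mathbb Q(\zeta)]$, which always contains trivial summands, whereas a module on which $C_p$ acts faithfully through $\zeta$ restricts to a sum of copies of $\mathbb Q(\zeta)$ alone (indeed $\mathbb Q(\zeta)$ itself has dimension $p-1$, not a multiple of $p$). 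The correct move, which is what the paper does, is to \emph{add} trivial summands rather than twist: for $W=\mathbb Q(\xi_\nu)$ with $C_p$ acting by $\zeta$, the honest module $W\oplus p^{\nu-1}1_{G\times C_p}$ restricts to $p^{\nu-1}\mathbb Q[C_p]$ and hence lies in $R(G,C_p)$, giving $[W]\equiv -p^{\nu-1}[1_{G\times C_p}]$ modulo $R(G,C_p)$; dually, for $C_p$-trivial $W$ one takes $W\oplus d_\nu\mathbb Q(\zeta)$ and then applies Lemma \ref{one} to get $[W]\equiv d_\nu[1_{G\times C_p}]$.

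Second, you take as a ``characterization'' of membership in $R(G,C_p)$ the condition that the $C_p$-restriction of a virtual class be an integer multiple of $[\mathbb Q[C_p]]$. That condition is clearly necessary (each generating free bimodule restricts to a multiple of the regular representation), but its sufficiency is essentially the content of the lemma: it is exactly the assertion that the corank-one sublattice this condition cuts out is generated by classes of honest $\mathbb Q[C_p]$-free bimodules, not merely a finite-index subgroup of them. So you cannot use it as a starting point without circularity---and you correctly flag this as the main obstacle without resolving it. The resolution is the pair of explicit congruences above: they show every irreducible is congruent to an integer multiple of $[1_{G\times C_p}]$ modulo $R(G,C_p)$, whence $R(G\times C_p)=R(G,C_p)+\mathbb Z\cdot 1_{G\times C_p}$, and directness follows because any element of $R(G,C_p)$ restricts to $m\bigl([1_{C_p}]+[\mathbb Q(\zeta)]\bigr)$ while $n\cdot 1_{G\times C_p}$ restricts to $n[1_{C_p}]$, forcing $n=0$.
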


\begin{proof}
Since $G$ is a finite Abelian $p$-group, the group ring $\mathbb Q[G\times C_p]$ is a product of cyclotomic field extensions of $\mathbb Q$ obtained by adjoining primitive $p^\nu$-roots of unity $\xi_\nu$ where $\nu\ge 0$ are integers.  In particular, any irreducible $\mathbb Q[G\times C_p]$-module $W$ is a cyclotomic field, say $W=\mathbb Q(\xi_\nu)$, whose degree over $\mathbb Q$ is $d_\nu=p^{\nu-1}(p-1)$ and whose degree over $\mathbb Q(\zeta)$ is $p^{\nu-1}$. 

More precisely, there is a group homomorphism $\chi:G\times C_p\to \mathbb Q(\xi_\nu)^\times$ into the multiplicative group of the field $W=\mathbb Q(\xi_\nu)$ such that the elements $y\in G\times C_p$ act on $W$ by multiplication by $\chi(y)$. If we fix a generator $y_0$ of $C_p$, then $C_p$ acts on $W$ by multiplication by $\chi(y_0)$. Since $y_0^p=1$, we know that $\chi(y_0)$ is a $p$-root of unity. In particular, we distinguish two cases. 

If $\chi(y_0)=1$, we have the $\mathbb Q[C_p]$-isomorphisms  
$$\empty_0\left(W\oplus d_\nu\mathbb Q(\zeta)\right)=(\empty_0W)\oplus d_\nu\mathbb Q(\zeta)\approx d_\nu 1_{C_p}\oplus d_\nu \mathbb Q(\zeta)\approx d_\nu \mathbb Q[C_p].$$

If $\chi(y_0)=\zeta$ is a primitive $p$-root of unity,  we have the $\mathbb Q[C_p]$-isomorphisms 
$$
\empty_0\left(W\oplus p^{\nu-1}1_{G\times C_p}\right)\approx p^{\nu-1}\mathbb Q(\zeta)\oplus p^{\nu-1}1_{C_p}\approx p^{\nu-1}\mathbb Q[C_p].
$$
By Lemma \ref{one}, we deduce that for any irreducible $\mathbb Q[G\times C_p]$-module $W$, either $W-d_\nu1_{G\times C_p}$ or $W+p^{\nu-1}1_{G\times C_p}$ represents an element in $R(G,C_p)$. \end{proof}

\begin{lemma}[Goursat \cite{Lang02}]\label{goursat}
The subgroups of a direct product of two finite groups $\Gamma\times \Omega$ are in bijection with the quintuples $(K,N,A,B,\theta)$ of subgroups $N\trianglelefteq K\le \Gamma$ and $B\trianglelefteq A\le \Omega$ and isomorphisms $\theta:K/N\approx A/B$. 
\end{lemma}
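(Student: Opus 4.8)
The plan is to exhibit the correspondence explicitly in both directions and then verify that the two assignments are mutually inverse. Write $\pi_\Gamma\colon\Gamma\times\Omega\to\Gamma$ and $\pi_\Omega\colon\Gamma\times\Omega\to\Omega$ for the two projections.

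Given a subgroup $S\le\Gamma\times\Omega$, I would set $K=\pi_\Gamma(S)$ and $A=\pi_\Omega(S)$, and take for $N$ and $B$ the ``slices''
\[
N=\{g\in\Gamma:(g,1)\in S\},\qquad B=\{h\in\Omega:(1,h)\in S\}.
\]
One checks immediately that $N\le K$ and $B\le A$, and that $N\trianglelefteq K$: if $(g,1)\in S$ and $k\in K$, choose $(k,a)\in S$; then $(k,a)(g,1)(k,a)^{-1}=(kgk^{-1},1)\in S$, so $kgk^{-1}\in N$, and symmetrically $B\trianglelefteq A$. Next I would define $\theta\colon K/N\to A/B$ by $\theta(kN)=aB$ whenever $(k,a)\in S$. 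The routine content of the lemma is that $\theta$ is well defined and is a group isomorphism: if $(k,a),(k',a')\in S$ with $kN=k'N$, write $k'=kn$ with $(n,1)\in S$, so $(k',a)=(k,a)(n,1)\in S$ and hence $(1,a^{-1}a')\in S$, giving $aB=a'B$; multiplicativity follows since $S$ is closed under multiplication; surjectivity is the definition of $A$; and $\theta(kN)=B$ means some $(k,a)\in S$ with $(1,a)\in S$, whence $(k,1)\in S$ and $k\in N$, so $\theta$ is injective. This yields the quintuple $(K,N,A,B,\theta)$.

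Conversely, given a quintuple $(K,N,A,B,\theta)$ I would set
\[
S=\{(k,a)\in K\times A:\theta(kN)=aB\},
\]
which is a subgroup of $\Gamma\times\Omega$ precisely because $\theta$ is a homomorphism (it contains $(1,1)$ since $\theta(N)=B$, and it is closed under products and inverses). Finally I would check the two procedures are mutually inverse. Starting from $S$, forming the quintuple, and reconstructing $S'=\{(k,a):\theta(kN)=aB\}$: the inclusion $S\subseteq S'$ is the definition of $\theta$, and for $S'\subseteq S$ one picks $(k,a_0)\in S$ with $\theta(kN)=a_0B$ and uses $(1,a_0^{-1}a)\in S$ to get $(k,a)\in S$. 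Starting from a quintuple and forming $S$: surjectivity of $\theta$ shows $\pi_\Gamma(S)=K$ and $\pi_\Omega(S)=A$, injectivity of $\theta$ shows the slices of $S$ are exactly $N$ and $B$, and the associated isomorphism is visibly $\theta$ again.

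I do not expect a genuine obstacle: this is the classical Goursat correspondence (see \cite{Lang02}), and the only points needing care are the well-definedness of $\theta$ and the two mutual-inverse verifications, each of which is a short chase inside $\Gamma\times\Omega$.
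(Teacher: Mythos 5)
Your proof is correct and is the standard Goursat argument; the paper itself gives no proof, merely citing \cite{Lang02} and displaying the map $(K,N,A,B,\theta)\mapsto S$, and your construction $S=\{(k,a)\in K\times A:\theta(kN)=aB\}$ is exactly that set written in the form $\{(kn,\theta(kN)b):k\in K,\,n\in N,\,b\in B\}$. The well-definedness of $\theta$, the subgroup verification, and the two mutual-inverse checks are all carried out correctly, so nothing is missing.
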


The correspondence in Goursat Lemma is given by the following map
\begin{align*}
(K,N,A,B,\theta)\mapsto S=\{(kn,\theta(kN)b)|n\in N,\; k\in K,\; b\in B\}\le \Gamma\times\Omega
\end{align*}

Let $\mathcal S_G$ be the graph with a vertex $K$ for each subgroup $K\le G$ and an edge $(K,N)$  for each pair of subgroups $N\le K$ having index $[K:N]=p$.

\begin{lemma}\label{branks}
$\text{\rm rank }B(G\times C_p)-\text{\rm rank }B(G,C_p)=\text{\rm rank }B(G).$
\end{lemma}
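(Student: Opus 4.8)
The plan is to count basis elements on both sides. By Proposition~\ref{rbbasis}, a basis for $B(G\times C_p)$ as a subset of $B(\Gamma)$ with $\Gamma=G\times C_p$ is indexed by conjugacy classes of subgroups of $\Gamma$; since $G\times C_p$ is Abelian, these are honest subgroups. By Proposition~\ref{bbasis}, the rank of $B(G)$ is the number of subgroups $K\le G$, and by Proposition~\ref{rbbasis} again the rank of $B(G,C_p)$ is the number of graphs $K\times\rho$ with $K\le G$ and $\rho\colon K\to C_p$ a homomorphism. So the statement to prove is purely combinatorial:
\begin{align*}
\#\{\text{subgroups of }G\times C_p\}-\#\{(K,\rho):K\le G,\ \rho\colon K\to C_p\}=\#\{\text{subgroups of }G\}.
\end{align*}

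First I would apply Goursat's Lemma (Lemma~\ref{goursat}) with $\Gamma=G$ and $\Omega=C_p$: subgroups $S\le G\times C_p$ correspond to quintuples $(K,N,A,B,\theta)$ with $N\trianglelefteq K\le G$, $B\trianglelefteq A\le C_p$, and an isomorphism $\theta\colon K/N\xrightarrow{\sim} A/B$. Since $C_p$ has exactly two subgroups, $1$ and $C_p$, the pair $(A,B)$ is one of $(1,1)$, $(C_p,C_p)$, or $(C_p,1)$. In the first two cases $A/B$ is trivial, forcing $K=N$ and leaving $\theta$ unique; these contribute exactly one subgroup of $G\times C_p$ for each subgroup $K\le G$ — twice over, giving $2\cdot\#\{\text{subgroups of }G\}$ subgroups of the form $K\times 1$ and $K\times C_p$. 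In the third case $A/B\cong C_p$, so we need $K\le G$ with $[K:N]=p$ together with an isomorphism $\theta\colon K/N\xrightarrow{\sim}C_p$; the number of such $\theta$ is $|\mathrm{Aut}(C_p)|=p-1$, and the resulting subgroup is the graph of the composite $K\to K/N\xrightarrow{\theta}C_p$. Thus the "third-case" subgroups are in bijection with pairs $(N\le K\le G,\ [K:N]=p)$ each counted $p-1$ times — equivalently with triples $(K,N,\theta)$, i.e. with edges of the graph $\mathcal S_G$ each weighted by $p-1$.

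Next I would count the graphs $K\times\rho$ on the $B(G,C_p)$ side. A homomorphism $\rho\colon K\to C_p$ is either trivial (one choice, graph $K\times 1$) or has image $C_p$ with kernel $N:=\ker\rho$ an index-$p$ subgroup of $K$; given such $N$ there are $p-1$ surjections $K\to C_p$ with that kernel. So $\#\{(K,\rho)\}=\#\{\text{subgroups of }G\}+(p-1)\cdot\#\{\text{edges of }\mathcal S_G\}$. Subtracting, the left side of the displayed identity becomes
\begin{align*}
\Bigl(2\cdot\#\{\text{subgrps}\}+(p-1)\#\{\text{edges}\}\Bigr)-\Bigl(\#\{\text{subgrps}\}+(p-1)\#\{\text{edges}\}\Bigr)=\#\{\text{subgrps of }G\},
\end{align*}
which is exactly $\mathrm{rank}\,B(G)$. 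The bookkeeping is the whole content; the only point needing a moment's care — and the closest thing to an obstacle — is matching the two occurrences of the $(p-1)\cdot\#\{\text{edges}\}$ term, i.e. checking that the "third Goursat case" subgroups of $G\times C_p$ are precisely the graphs of the surjective homomorphisms $K\twoheadrightarrow C_p$ and hence cancel verbatim against the non-trivial $\rho$'s counted in $B(G,C_p)$. This follows because the graph of $\rho\colon K\to C_p$ in $G\times C_p$, as in \eqref{grp}, is exactly the Goursat subgroup attached to $(K,\ker\rho,C_p,1,\bar\rho)$ with $\bar\rho$ the induced isomorphism $K/\ker\rho\xrightarrow{\sim}C_p$. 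Once that identification is recorded, the rank identity drops out.
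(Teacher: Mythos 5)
Your proof is correct and follows essentially the same route as the paper: both reduce the statement to counting basis elements, apply Goursat's Lemma to get $\#\{\text{subgroups of }G\times C_p\}=2\cdot\#\{\text{vertices of }\mathcal S_G\}+(p-1)\cdot\#\{\text{edges of }\mathcal S_G\}$, count the graphs $K\times\rho$ as $\#\{\text{vertices}\}+(p-1)\cdot\#\{\text{edges}\}$, and subtract. The only difference is cosmetic: you spell out the identification of the third Goursat case with the graphs of surjective homomorphisms, which the paper leaves implicit.
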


\begin{proof}  By Proposition \ref{bbasis}, we have 
\begin{align*} \text{\rm rank }B(G)&=\#\text{ vertices in }\mathcal S_{G},\\
\text{\rm rank }B(G\times C_p)&=\#\text{ vertices in }\mathcal S_{G\times C_p}.\end{align*}
By Proposition \ref{rbbasis}, a basis for $B(G,C_p)$ is given by graphs of homomorphisms $\rho:K\to C_p$ from subgroups $K\le G$. Each such $\rho$ is either trivial $\rho=1$ or factors through a canonical map $K\to K/N$ and an automorphism of $C_p$ where $N\le K$ is a subgroup of index $[K:N]=p$. The number of automorphisms of $C_p$ is $p-1$. Hence, the number of graphs $K\times\rho$ equals the number of subgroups $K\le G$ ($\rho=1$) plus $(p-1)$ times the number of pairs $(K,N)$ with $N\le K$ having index $p$ ($\rho\neq 1$):
$$
\text{\rm rank }B(G,C_p)=\#\text{ vertices in }\mathcal S_G+(p-1)\cdot \#\text{ edges in }\mathcal S_G. 
$$
By Goursat's Lemma \ref{goursat}, the vertices of $\mathcal S_{G\times C_p}$ are in bijection with quintuples $(K,N,A,B,\theta)$ of subgroups $N\le K\le G$ and $B\le A\le C_p$ and isomorphisms $\theta:K/N\approx A/B$. We distinguish two cases: 

1) $A=B$, $K=N$, $\theta=1$ and 

2) $A=C_p$, $B=1$, $(K,N)$ is an edge in $\mathcal S_G$ and $\theta:K/N\to C_p$ is an isomorphism. 

Since $C_p$ has only two subgroups, we conclude that 
$$
\#\text{ vertices in }\mathcal S_{G\times C_p}=2\cdot \#\text{ vertices in }\mathcal S_G+(p-1)\cdot \#\text{ edges in }\mathcal S_G.
$$ 
The statement now follows by combining the formulas above. 
\end{proof}

\begin{proposition}\label{key}
$\text{\rm rank }K(G\times C_p)-\text{\rm rank }K(G,C_p)=\text{\rm rank }B(G)-1.$
\end{proposition}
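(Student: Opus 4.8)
The plan is to read off all four ranks from the two short exact sequences in \eqref{exactseq} and then subtract. Since $G\times C_p$ is a $p$-group, the map $f_{G\times C_p}$ is surjective by Ritter--Segal \cite{Seg72}, and $f_{G,C_p}$ is surjective by Anton \cite{Ant06}; hence both rows of \eqref{exactseq} are genuine short exact sequences. Taking the alternating sum of ranks along each row gives
\begin{align*}
\text{\rm rank }K(G\times C_p)&=\text{\rm rank }B(G\times C_p)-\text{\rm rank }R(G\times C_p),\\
\text{\rm rank }K(G,C_p)&=\text{\rm rank }B(G,C_p)-\text{\rm rank }R(G,C_p).
\end{align*}

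Subtracting the second identity from the first, the difference $\text{\rm rank }K(G\times C_p)-\text{\rm rank }K(G,C_p)$ equals
$$
\bigl(\text{\rm rank }B(G\times C_p)-\text{\rm rank }B(G,C_p)\bigr)-\bigl(\text{\rm rank }R(G\times C_p)-\text{\rm rank }R(G,C_p)\bigr).
$$
By Lemma \ref{branks} the first bracket is $\text{\rm rank }B(G)$, and by Lemma \ref{rranks} the decomposition $R(G\times C_p)=R(G,C_p)\oplus\mathbb Z\cdot 1_{G\times C_p}$ shows the second bracket is $1$. Combining these yields $\text{\rm rank }K(G\times C_p)-\text{\rm rank }K(G,C_p)=\text{\rm rank }B(G)-1$, which is the assertion.

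The argument is essentially formal, so there is no serious obstacle once the preceding lemmas are available; the one input that genuinely must be invoked is the exactness on the right of both rows of \eqref{exactseq}, i.e. the surjectivity of $f_{G\times C_p}$ and of $f_{G,C_p}$, since without it the alternating sum of ranks would only give an inequality. Everything else is bookkeeping with Lemma \ref{branks} and Lemma \ref{rranks}.
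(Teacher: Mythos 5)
Your proposal is correct and follows essentially the same route as the paper: both read off $\text{\rm rank }K = \text{\rm rank }B - \text{\rm rank }R$ from each row of \eqref{exactseq} (using the surjectivity results of \cite{Seg72} and \cite{Ant06}) and then subtract, invoking Lemma \ref{branks} and Lemma \ref{rranks} for the two differences. Your explicit remark that right-exactness is the one essential input is a fair point, though the paper has already recorded that surjectivity just before the diagram.
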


\begin{proof}
From the diagram of exact sequences \eqref{exactseq}, we deduce the rank relations
\begin{align*}
\text{\rm rank }B(G\times C_p)&=\text{\rm rank }K(G\times C_p)+\text{\rm rank }R(G\times C_p) \\
\text{\rm rank }B(G, C_p)&=\text{\rm rank }K(G, C_p)+\text{\rm rank }R(G, C_p)
\end{align*}
By Lemma \ref{rranks},  $\text{\rm rank }R(G\times C_p)-\text{\rm rank }R(G,C_p)=1$ and thus if we take the difference between the two equations above and use Lemma \ref{branks}, we get the statement. 
\end{proof}

\section{Generators up to torsion}

For the rest of the paper we use the notation $\epsilon:G\to 1$ for the trivial map and the notation \eqref{grp} for the graph of a homomorphism. By Goursat Lemma \ref{goursat} the subgroups of $G\times C_p$ have the following structure 
\begin{align}\label{list1}
1\times \epsilon,\; L\times \epsilon,\; 1\times C_p,\; L\times C_p,\; L\times\lambda
\end{align}
where $\lambda:L\to C_p$ is a surjective homomorphism and $1\neq L\le G$. 

\begin{lemma}\label{list}
We have the following list of possible pairs of subgroups $(K,N)$ of $G\times C_p$ with $K/N\approx C_p\times C_p$:
\begin{align*}
&K=G'\times C_p,\; &N=L\times \rho &&\text{ with } \rho:L\to C_p,\; \rho(G'^p)=1,\; G'/L\approx C_p\\
&K=G'\times C_p,\; &N=L\times C_p &&\text{ with } G'/L\approx C_p\times C_p\\
&K=G'\times \lambda,\; &N=L\times \lambda &&\text{ with }  \lambda:G'\to C_p\text{ surjective},\; G'/L\approx C_p\times C_p\\
&K=G'\times \epsilon,\; &N=L\times \epsilon &&\text{ with } G'/L\approx C_p\times C_p
\end{align*}
where $(G',L)$ are pairs of subgroups of $G$ with $L<G'$. 
\end{lemma}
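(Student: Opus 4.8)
The plan is to reduce the statement to a short finite case–check, using Goursat's Lemma \ref{goursat} twice: once to know the shape of $K$, and once to know the subgroups $N\le K$. First, since $G\times C_p$ is abelian every subgroup is normal, so a pair $(K,N)$ with $K/N\approx C_p\times C_p$ is simply an inclusion $N\le K\le G\times C_p$ whose quotient is $C_p\times C_p$. By \eqref{list1}, $K$ has one of the three shapes $G'\times\epsilon$, $G'\times C_p$, or $G'\times\lambda$ with $\lambda\colon G'\to C_p$ surjective; these are distinguished by the projection $\pi\colon G\times C_p\to C_p$, namely $\pi(K)=1$ in the first case, while in the other two $\pi(K)=C_p$ and $K$ does, respectively does not, contain $1\times C_p$. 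So the first step is to fix the shape of $K$ and proceed by cases.

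Second, for each shape of $K$ I would list the subgroups $N\le K$ — these are again the members of \eqref{list1} that happen to lie inside $K$ — and then impose the two constraints that single out $C_p\times C_p$: the index $[K:N]=p^2$, and that $K/N$ have exponent $p$ (to rule out $C_{p^2}$). For $K=G'\times\epsilon\cong G'$ one has $N=L\times\epsilon$ and $K/N\cong G'/L$, giving line (4). For $K=G'\times\lambda$ every subgroup is a graph $L\times\lambda$ with $L\le G'$, again $K/N\cong G'/L$, giving line (3). For $K=G'\times C_p$ the subgroups are $L\times C_p$ and the graphs $L\times\rho$ with $\rho\colon L\to C_p$ arbitrary (the trivial and surjective possibilities both occurring); the subcase $N=L\times C_p$ gives $K/N\cong G'/L$ and hence line (2), while $N=L\times\rho$ forces $[G':L]=p$, which is line (1).

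The one place where the exponent condition has real content is the last subcase. Here I would compute, for $(g,c)\in K=G'\times C_p$, that $(g,c)^p=(g^p,1)$ since $c^p=1$, so $K/N$ has exponent $p$ if and only if $(g^p,1)\in N=L\times\rho$ for all $g\in G'$. Because $G$ is abelian and $[G':L]=p$, the quotient $G'/L$ is elementary abelian of order $p$, so $G'^p\le L$; thus $\rho$ is defined on $G'^p$, and the condition $(g^p,1)\in L\times\rho$ becomes exactly $\rho(G'^p)=1$. This is precisely the constraint in line (1), and collecting the four cases completes the classification.

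The main obstacle is purely organizational: enumerating without omission or duplication the subgroups of each of the three types of $K$, and being careful in the $K=G'\times C_p$ case to extract the exponent condition $\rho(G'^p)=1$ rather than merely the index condition. There is no deeper input than Goursat's Lemma and the elementary fact that an index‑$p$ subgroup of an abelian $p$‑group contains the $p$‑th powers, so I would present the write‑up simply as the four‑way split above, each branch being a one‑line index‑and‑exponent computation.
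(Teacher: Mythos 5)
Your proposal is correct and follows essentially the same route as the paper: a case split on the shape of $K$ via \eqref{list1}, then on the subgroups $N\le K$, with the only substantive point being the condition $\rho(G'^p)=1$ in the graph subcase of $K=G'\times C_p$. The paper handles that point by writing down an explicit isomorphism $(G'\times C_p)/(L\times\rho)\approx C_p\times C_p$ and separately showing the quotient is $C_{p^2}$ when $\rho(G'^p)\neq 1$, whereas you use the exponent-$p$ criterion for groups of order $p^2$; both are valid and yield the same constraint.
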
 
\begin{proof}
The structure of the subgroup $K$ is given by \eqref{list1}. If $K$ is not a graph, the structure of a subgroup of $K$ is again give by \eqref{list1}.  If $K$ is a graph, any subgroup of $K$ is a subgraph. The constraint $K/N\approx C_p\times C_p$ translates to $G'/L\approx C_p\times C_p$ except for the case of a homomorphism $\rho:L\to C_p$ with $G'/L\approx C_p$. In this case, we always have $G'^p\le L$ as $G'/L\approx C_p$. If $\rho(G'^p)=1$, then an isomorphism $(G'\times C_p)/(L\times\rho)\approx C_p\times C_p$ is given by 
$$(yx_0^i,c)\mapsto(c_0^i,c\rho(y)^{-1}),\; \; y\in L,\; c\in C_p,\; i\in\mathbb Z$$ 
where $x_0\in G'$ is a generator of $G'/L\approx C_p$ and $c_0\in C_p$ is a generator of $C_p$. Indeed, each element of $G'$ is of the form $yx_0^i$ and the map is well defined since any other representation $y'x_0^j=yx_0^i$ gives $y'y^{-1}=x_0^{i-j}$ with $i-j=kp$ for some $k\in\mathbb Z$.  Hence, $$\rho(y'y^{-1})=\rho(x_0^{kp})=\rho(x_0^p)^k=1.$$ 
If $\rho(G'^p)\neq 1$ then $(G'\times C_p)/(L\times\rho)\approx C_{p^2}$ is generated by $(x_0,1)$:
$(x_0^p,1)\not\in L\times \rho$ as $\rho(x_0^p)\neq 1$, but $(x_0^{p^2},1)\in L\times\rho$  as $\rho(x_0^{p^2})=\rho(x_0^p)^p=1$ (recall that $x_0^p\in L$).
\end{proof}

Now we make a sublist $\mathcal L_{G\times C_p}$ of pairs $(K,N')$ selected from Lemma \ref{list} such that each $K$ appears \emph{exactly once} in $\mathcal L_{G\times C_p}$ as follows. For each non-trivial subgroup $G'\le G$ we \emph{choose} a subgroup $L'<G'$ such that $G'/L'\approx C_p\times C_p$ and if this is impossible, we \emph{choose} a subgroup $L'<G'$ such that $G'/L'\approx C_p$. 

\begin{definition} \label{Llist} With the choices above, the list $\mathcal L_{G\times C_p}$ of pairs $(K,N')$ of subgroups of $G\times C_p$ with $K/N'\approx C_p\times C_p$ is defined by 
\begin{align*}
&\text{ if } G'/L'\approx C_p\times C_p&&K=G'\times C_p,\; N'=L'\times C_p \\
&&&K=G'\times \lambda,\; N'=L'\times \lambda \text{ with } \lambda:G'\to C_p\text{ surjective}\\
&&&K=G'\times \epsilon,\; N'=L'\times \epsilon \\
&\text{ if } G'/L'\approx C_p&&K=G'\times C_p,\; N'=L'\times \epsilon 
\end{align*}
\end{definition}

\begin{lemma}\label{inspection}
The number of pairs $(K,N')$ of subgroups of $G\times C_p$ in the list $\mathcal L_{G\times C_p}$ with $K$ not a graph is one less than the number of subgroups of $G$. 
\end{lemma}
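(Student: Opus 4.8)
The plan is to read the count off directly from Definition \ref{Llist}, sorting its entries $(K,N')$ according to whether or not $K$ is a graph.

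First I would recall, from the Goursat classification \eqref{list1}, that a subgroup of $G\times C_p$ fails to be a graph exactly when it has the form $L\times C_p$ with $1\le L\le G$ — equivalently, when it contains $1\times C_p$ — whereas every subgroup of the forms $1\times\epsilon$, $L\times\epsilon$, $L\times\lambda$ (the graphs of the trivial, resp. of a surjective, homomorphism $L\to C_p$) is a graph. Applying this to the four types of entries in Definition \ref{Llist}: the entries with $K=G'\times\lambda$ and with $K=G'\times\epsilon$ are graphs, while the entries with $K=G'\times C_p$ are not. Hence the pairs $(K,N')$ in $\mathcal L_{G\times C_p}$ with $K$ not a graph are precisely those with $K=G'\times C_p$.

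Second I would check that these are in bijection with the non-trivial subgroups of $G$. By construction $\mathcal L_{G\times C_p}$ is assembled by running over the non-trivial subgroups $G'\le G$, and for each such $G'$ — whether it admits a quotient $\approx C_p\times C_p$ (the non-cyclic case, contributing three entries) or only a quotient $\approx C_p$ (the non-trivial cyclic case, contributing one entry) — exactly one of its associated entries has $K=G'\times C_p$. Distinct $G'$ give distinct $K=G'\times C_p$ (recover $G'$ by projecting to the $G$-factor), and $K=1\times C_p$ never occurs, since the trivial subgroup is excluded from the construction and in any case $1\times C_p$ has order $p$ and so has no quotient $\approx C_p\times C_p$. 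Therefore the pairs with $K$ not a graph are counted by the non-trivial subgroups of $G$, i.e.\ there are $(\#\{\text{subgroups of }G\})-1$ of them.

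This argument is essentially bookkeeping, so I do not anticipate a genuine obstacle; the only point requiring care is the graph/non-graph dichotomy — in particular remembering that $G'\times\epsilon$ counts as a graph. If one mistakenly lumped $G'\times\epsilon$ in with $G'\times C_p$, the count would be inflated by exactly the number of non-cyclic subgroups of $G$ (each contributing a $G'\times\epsilon$ entry), so it is this distinction that makes the clean answer $(\#\{\text{subgroups of }G\})-1$ come out.
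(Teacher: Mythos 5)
Your proposal is correct and follows essentially the same route as the paper: both arguments observe that the non-graph entries of $\mathcal L_{G\times C_p}$ are exactly those with $K=G'\times C_p$, and that each non-trivial subgroup $G'\le G$ contributes precisely one such entry, giving the count $(\#\{\text{subgroups of }G\})-1$. Your version merely spells out the graph/non-graph dichotomy from \eqref{list1} and the injectivity of $G'\mapsto G'\times C_p$ more explicitly than the paper does.
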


\begin{proof}
Each non-trivial subgroup $G'\le G$ falls  into one of the two categories of the Definition \ref{Llist}. Namely, $G'$ is non-cyclic if and only if admits a quotient $G'/L'\approx C_p\times C_p$. If $G'$ is cyclic, then it is non-trivial if and only if admits a quotient $G'/L'\approx C_p$. Since each product $K=G'\times C_p$ appears exactly once in the list $\mathcal L_{G\times C_p}$, this concludes the proof.
\end{proof}

\begin{lemma}\label{absolute}
 $\text{\rm rank }K(G\times C_p)=$ number of pairs $(K,N')$ in the list $\mathcal L_{G\times C_p}$.  
\end{lemma}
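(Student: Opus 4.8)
The plan is to convert the statement into a count of subgroups of $G\times C_p$ and then exhibit an explicit bijection.

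\textbf{Step 1.} First I would apply Corollary \ref{rank}. Since $G\times C_p$ is Abelian, every conjugacy class of subgroups is a singleton, so $\text{\rm rank }K(G\times C_p)$ equals the number of non-cyclic subgroups of $G\times C_p$. It therefore suffices to show that the forgetful map $(K,N')\mapsto K$ is a bijection from $\mathcal L_{G\times C_p}$ onto the set of non-cyclic subgroups of $G\times C_p$. This map is injective because, by construction, each subgroup $K$ occurs in exactly one pair $(K,N')$ of the list $\mathcal L_{G\times C_p}$.

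\textbf{Step 2.} Next I would enumerate the non-cyclic subgroups of $G\times C_p$ from the Goursat list \eqref{list1}. The groups $1\times\epsilon$ and $1\times C_p$ are cyclic. Each $L\times\epsilon$ and each graph $L\times\lambda$ with $\lambda:L\to C_p$ surjective is isomorphic to $L$ via projection to the first factor, hence is non-cyclic precisely when $L$ is non-cyclic. Finally, for $1\neq L\le G$ the product $L\times C_p$ has the two distinct order-$p$ subgroups $1\times C_p$ and $C\times\epsilon$, where $C\le L$ has order $p$, so it is never cyclic. Thus the non-cyclic subgroups of $G\times C_p$ are exactly the $L\times\epsilon$ with $L\le G$ non-cyclic, the $L\times C_p$ with $1\neq L\le G$, and the $L\times\lambda$ with $L\le G$ non-cyclic and $\lambda:L\to C_p$ surjective.

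\textbf{Step 3.} Finally I would read off from Definition \ref{Llist} which subgroups $K$ occur in $\mathcal L_{G\times C_p}$. Running over the non-trivial $G'\le G$, these are the products $G'\times C_p$ in every case, together with $G'\times\epsilon$ and all graphs $G'\times\lambda$ with $\lambda:G'\to C_p$ surjective whenever $G'$ is non-cyclic; here one uses that an Abelian $p$-group is non-cyclic if and only if it admits a quotient isomorphic to $C_p\times C_p$ (cf. the proof of Lemma \ref{inspection}). This list coincides term for term with the one in Step 2, so the map of Step 1 is onto, hence a bijection, and the lemma follows. The step I expect to need the most care is Step 3: one must check that $\mathcal L_{G\times C_p}$ contains $G'\times\lambda$ for \emph{every} surjective $\lambda:G'\to C_p$, not merely one representative, and that the choices of the auxiliary subgroups $N'$ — which are intertwined with these $\lambda$ — neither identify nor omit any non-cyclic subgroup of $G\times C_p$. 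Once the principle ``each $K$ appears exactly once'' is matched against the enumeration of Step 2, this is routine bookkeeping, but it is the main place where the precise recipe of Definition \ref{Llist} is used.
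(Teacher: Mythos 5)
Your proof is correct and follows essentially the same route as the paper: reduce via Corollary \ref{rank} to counting non-cyclic subgroups of $G\times C_p$ (singleton conjugacy classes since $G$ is Abelian), then check that the assignment $(K,N')\mapsto K$ from $\mathcal L_{G\times C_p}$ hits each non-cyclic subgroup exactly once, using that an Abelian $p$-group is non-cyclic iff it has a $C_p\times C_p$ quotient and that $G'\times C_p$ is never cyclic for $G'\neq 1$. Your Steps 2--3 merely spell out the enumeration that the paper's proof states more tersely.
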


\begin{proof} By Corollary \ref{rank} the rank of $K(G\times C_p)$ equals the number of non-cyclic subgroups of $G\times C_p$. In the list $\mathcal L_{G\times C_p}$ the non-cyclic subgroups $K\le G\times C_p$ appear exactly once. Indeed, for a graph subgroup $G'\times\rho\le G\times C_p$ to admit a quotient $G'/L'\approx C_p\times C_p$ is equivalent with being non-cyclic. And a subgroup $G'\times C_p$ that admits a quotient $G'/L'\approx C_p$ is non-cyclic as a direct product. The two cases cover all the possibilities of non-cyclic subgroups without overlap. \end{proof}

By Proposition \ref{teta}, each pair $(K,N)$ from the Lemma \ref{list} produces an element $\text{Induf}(\Theta_{K/N})$ of $K(G\times C_p)$ which is defined as follows 
\begin{align*}
\text{Induf}:K(K/N)\to K(G\times C_p)\text{ is given by }S/N\mapsto S\text{ for }N\le S\le K\\
\Theta_{K/N}=(N/N)-\sum_{C'}(C'/L)+p(K/N)\text{ where }N\le C'\le K\text{ such that }C'/N\approx C_p
\end{align*}
Indeed, by \eqref{mac} and \eqref{inf} we have the following calculation 
\begin{align}\label{gen}
\text{\rm Ind}^{G\times C_p}_K\;\text{\rm  Inf}^{K}_{K/N}(\Theta_{K/N})=\text{Induf}(\Theta_{K/N})=N-\sum_{C'}C'+pK
\end{align}

\begin{theorem} \label{mainprop}
Let $p$ be a prime and $G$ a finite Abelian $p$-group. Then $K(G,C_p)[\tfrac{1}{p}]$  is a free $\mathbb Z[\tfrac{1}{p}]$-module whose basis is given by the elements  $\text{\rm Induf}(\Theta_{K/N'})$ indexed by the pairs of the form $(K,N')=(G'\times\rho,L'\times\rho)$ in the list $\mathcal L_{G\times C_p}$ where $\rho:G'\to C_p$ is a homomorphism and $G'/L'\approx C_p\times C_p$ is a sub-quotient of $G$.
\end{theorem}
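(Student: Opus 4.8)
The plan is to identify the free $\mathbb{Z}[\tfrac1p]$-module $K(G,C_p)[\tfrac1p]$ with the span of the indicated graph-type induced relations by a rank count together with a direct verification that these elements do lie in $K(G,C_p)$ and are $\mathbb{Z}[\tfrac1p]$-independent. First I would check membership: by Proposition~\ref{nat} and diagram~\eqref{comm}, $\text{Induf}(\Theta_{K/N'})$ lies in $K(G\times C_p)$ for every pair in Lemma~\ref{list}; what is special about a graph pair $(K,N')=(G'\times\rho,L'\times\rho)$ is that \emph{every} subgroup $S$ with $N'\le S\le K$ is again a graph (a subgraph of the graph $K$), so $\text{Induf}(\Theta_{K/N'})$ is an integral combination of graph subgroups, i.e. it lies in the submodule $B(G,C_p)\subseteq B(G\times C_p)$. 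Hence it lies in $K(G\times C_p)\cap B(G,C_p)=K(G,C_p)$. The non-graph pairs in $\mathcal L_{G\times C_p}$, by contrast, produce relations involving subgroups of the form $L\times C_p$ or $L\times\epsilon$ that are not graphs, so those are \emph{not} in $B(G,C_p)$ — this is exactly why they are excluded from the proposed basis.

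Next I would count. By Lemma~\ref{absolute}, $\text{rank}\,K(G\times C_p)$ equals the number of pairs in $\mathcal L_{G\times C_p}$; by Lemma~\ref{inspection}, the number of those with $K$ not a graph is $(\#\text{ subgroups of }G)-1$, so the number of graph-type pairs in $\mathcal L_{G\times C_p}$ is $\text{rank}\,K(G\times C_p)-(\#\text{ subgroups of }G)+1=\text{rank}\,K(G\times C_p)-\text{rank}\,B(G)+1$. By Proposition~\ref{key} this last quantity is exactly $\text{rank}\,K(G,C_p)$. So the proposed generating set has precisely the right cardinality; it remains only to show it is $\mathbb{Z}[\tfrac1p]$-linearly independent, for then it spans a full-rank direct summand-after-inverting-$p$ of the free module $K(G,C_p)[\tfrac1p]$, and (since $K(G,C_p)[\tfrac1p]$ is free of that rank) it must be a basis.

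For independence I would exploit the triangular structure coming from the vertex $K$ of each relation. By~\eqref{gen}, $\text{Induf}(\Theta_{K/N'})=N'-\sum_{C'}C'+pK$, and the coefficient of the basis element $K\in B(G\times C_p)$ is $p$, which is invertible in $\mathbb{Z}[\tfrac1p]$. Since by Lemma~\ref{absolute} (and its proof) each non-cyclic $K$ occurs as the "top vertex" of exactly one relation in $\mathcal L_{G\times C_p}$, and since in $\text{Induf}(\Theta_{K/N'})=N'-\sum C'+pK$ the only subgroups appearing that are non-cyclic-and-of-graph-type with the same "size class" as $K$ is $K$ itself (the $C'$ and $N'$ are strictly smaller, and $N'$ is non-cyclic only when it is itself the top vertex of another relation, but then it is smaller than $K$), a partial order by $|K|$ makes the matrix of coefficients of these elements, expressed in the graph-subgroup basis of $B(G,C_p)$, block upper-triangular with diagonal entries $p$. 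Hence they are independent over $\mathbb{Z}[\tfrac1p]$.

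The main obstacle I anticipate is the bookkeeping in the triangularity argument: one must pin down precisely which graph subgroups $N'$ and $C'$ can occur in two different relations $\text{Induf}(\Theta_{K/N'})$ and $\text{Induf}(\Theta_{\tilde K/\tilde N'})$, and confirm that no non-top-vertex term of one relation coincides with the top vertex $K$ of another — i.e. that the chosen list $\mathcal L_{G\times C_p}$ (via the fixed choices of $L'<G'$ in Definition~\ref{Llist}) really is "minimal" enough to avoid circular dependencies. This is where the hypothesis that $G$ is \emph{Abelian} is used: it guarantees the conjugacy classes of subgroups are singletons and that the Goursat data in Lemma~\ref{list} take the restricted form listed, so the combinatorics of $\mathcal S_G$ and of graphs $G'\times\rho$ stays manageable. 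Once the triangularity is established, the rest is the rank comparison already assembled from Proposition~\ref{key}, Lemma~\ref{absolute}, and Lemma~\ref{inspection}.
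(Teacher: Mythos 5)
Your proposal is correct and is essentially the paper's own argument: membership follows because every subgroup of a graph $K=G'\times\rho$ is a subgraph, the count follows from Proposition \ref{key} together with Lemmas \ref{inspection} and \ref{absolute}, and independence plus the basis claim follow from the triangular structure given by the dominant terms $pK$, with $p$ invertible in $\mathbb Z[\tfrac{1}{p}]$. (Only a minor aside is off: $L\times\epsilon$ is the graph of the trivial homomorphism $L\to C_p$ and does lie in the basis of $B(G,C_p)$; the non-graph pairs are excluded because their relations contain the non-graph terms $C'\times C_p$ and $G'\times C_p$ --- but nothing in your argument depends on that remark.)
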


\begin{proof}
By Proposition \ref{key} we have  $\text{rank }K(G,C_p)=\text{ rank }K(G\times C_p)-\text{ rank }B(G)+1$ where $\text{rank }B(G)=$ the number of subgroups of $G$. By Lemmas \ref{inspection} and \ref{absolute} we deduce that  $\text{rank }K(G,C_p)=$ the number of pairs $(K,N')$ with $K$ a graph, which are listed in $\mathcal L_{G\times C_p}$.  Observe that $K$ is a graph if there is a homomorphism $\rho:G'\to C_p$ such that $G'\le G$ and $K=G'\times \rho$. In this situation, any subgroup of $K$ must be a subgraph of the form $L\times \rho\le K$ where $L\le G'$ and $\rho$ is restricted to $L$. Hence, 
\begin{align}\label{basis}\text{Induf}(\Theta_{K/N'})=L'\times\rho-\sum_{C'}C'\times\rho+p(G'\times \rho)\end{align}
where $L'<C'<G'$ such that $C'/L'\approx C_p$ according to \eqref{gen}. We deduce that each element \eqref{basis} belongs to $B(G,C_p)$. For $(K,N')=(G'\times\rho,L'\times\rho)$ in the list $\mathcal L_{G\times C_p}$ the number of these elements equals the rank of $K(G,C_p)$. Since their dominant terms $pK$ under inclusion form a sub-basis of $B(G\times C_p)[\tfrac{1}{p}]$, the statement follows. 
\end{proof}

\begin{corollary}[ \cite{Kah13}]
For $p$ be a prime and $G$ a cyclic $p$-group, $K(G,C_p)=0$. 
\end{corollary}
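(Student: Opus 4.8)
The plan is to read this off directly from Theorem \ref{mainprop}, using only the elementary observation that every section of a cyclic group is again cyclic.

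First I would observe that $K(G,C_p)$ is torsion-free: by Definition \ref{rel} it is a submodule of $B(G,C_p)$, which in turn sits inside $B(G\times C_p)$, and $B(G\times C_p)$ is a free $\mathbb Z$-module by Proposition \ref{bbasis}. Hence it suffices to prove that $\text{\rm rank }K(G,C_p)=0$, equivalently that $K(G,C_p)[\tfrac1p]=0$, since a torsion-free $\mathbb Z$-module of rank zero vanishes.

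Next I would invoke Theorem \ref{mainprop}: $K(G,C_p)[\tfrac1p]$ is free over $\mathbb Z[\tfrac1p]$ on the elements $\text{\rm Induf}(\Theta_{K/N'})$ indexed by those pairs $(K,N')=(G'\times\rho,L'\times\rho)$ in the list $\mathcal L_{G\times C_p}$ for which $L'<G'\le G$ and $G'/L'\approx C_p\times C_p$. Now use the hypothesis that $G$ is cyclic: every subgroup $G'\le G$ is cyclic, every quotient $G'/L'$ of such a subgroup is cyclic, and $C_p\times C_p$ is not cyclic. Therefore no pair $(G',L')$ of subgroups of $G$ with $G'/L'\approx C_p\times C_p$ exists, the indexing set of the basis in Theorem \ref{mainprop} is empty, and so $K(G,C_p)[\tfrac1p]=0$. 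Combined with the torsion-freeness from the previous step, this gives $K(G,C_p)=0$.

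There is essentially no obstacle here; the one point that must not be glossed over is the passage from ``rank $0$'' to ``the module is $0$'', which is precisely where torsion-freeness of $K(G,C_p)$ (as a submodule of the free module $B(G\times C_p)$) is needed — Theorem \ref{mainprop} alone only controls $K(G,C_p)[\tfrac1p]$, leaving open a priori a finite $p$-torsion subgroup. As a sanity check one could instead bypass Theorem \ref{mainprop} and combine Proposition \ref{key} with Corollary \ref{rank}: for $G=C_{p^n}$ one has $\text{\rm rank }B(G)=n+1$, so $\text{\rm rank }K(G,C_p)=\text{\rm rank }K(C_{p^n}\times C_p)-n$, and a direct count shows that $C_{p^n}\times C_p$ has exactly $n$ non-cyclic subgroups (one of the form $C_{p^j}\times C_p$ — viewed through Goursat's Lemma \ref{goursat} — for each $j=1,\dots,n$), again yielding rank $0$; but routing through Theorem \ref{mainprop} is shorter.
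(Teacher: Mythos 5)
Your proposal is correct and follows essentially the same route as the paper: invoke Theorem \ref{mainprop} to see that the basis of $K(G,C_p)[\tfrac1p]$ is indexed by sub-quotients $G'/L'\approx C_p\times C_p$ of $G$, note that a cyclic group has none, and conclude via torsion-freeness of $K(G,C_p)$ as a submodule of the free $\mathbb Z$-module $B(G\times C_p)$. The only difference is your (correct) added sanity check via Proposition \ref{key} and Corollary \ref{rank}, which the paper does not carry out.
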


\begin{proof}
Since $G$ is cyclic, $G$ does not admit sub-quotients of the form $G'/L'\approx C_p\times C_p$.
By Theorem \ref{mainprop}, $\text{rank }K(G,C_p)=0$. Recall that $B(G\times C_p)$ is a free $\mathbb Z$-module and thus, $K(G,C_p)\subset B(G\times C_p)$ is a free $\mathbb Z$-submodule. Hence, $K(G,C_p)=0$. 
\end{proof}

\section{The reduction to Type 2 generators}

We denote by $K'(G,C_p)\subset K(B,C_p)$ the submodule generated by the elements of $K(G,C_p)$ that are 'indufted' from sub-quotients of $G\times C_p$ isomorphic to $C_p\times C_p\times C_p$. The Theorem \ref{mainbb} states that $K'(G,C_p)=K(G,C_p)$. Since $K(G,C_p)\subset K(G\times C_p)$, by Theorem \ref{brauer} we know that each element $x$ of $K(G,C_p)$ is a $\mathbb Z$-linear combination of elements of the form $\text{Induf}(\Theta_{K/N})$ where $\Theta_{K/N}$ are defined as in \eqref{gen} for each pair $(K,N)$ given by Lemma \ref{list}. A careful analysis of the elements  $\text{Induf}(\Theta_{K/N})$ reveals the following classification:

{\bf Type 1.} For each pair of subgroups $L<G'<G$ with $G'/L\approx C_p\times C_p$ and each homomorphism $\alpha:G'\to C_p$ we define
$$
A_{G',L,\alpha}=L\times\alpha-\sum_{L<C'<G'} C'\times \alpha+pG'\times \alpha
$$
Here $C'$ runs over the subgroups $L<C'<G'$ with $C'/L\approx C_p$.

{\bf Type 2.} For each pair of subgroups $C<G'<G$ with $G'/C\approx C_p$ and each homomorphism $\beta:C\to C_p$ with $\beta(G'^p)=1$ we define
$$
B_{G',C,\beta}=C\times\beta-\sum_{\tilde{\beta}|C=\beta}G'\times\tilde{\beta}-C\times C_p+pG'\times C_p
$$
Here $\tilde{\beta}$ runs over the homomorphisms $\tilde{\beta}:G'\to C_p$ with $\tilde{\beta}|C=\beta$.

{\bf Type 3.} For each pair of subgroups $L<G'<G$ with $G'/L\approx C_p\times C_p$ we define
$$
D_{G',L}=L\times C_p-\sum_{L<C'<G'} C'\times C_p+pG'\times C_p
$$
Here $C'$ runs over the subgroups $L<C'<G'$ with $C'/L\approx C_p$. 

\begin{lemma}\label{type1}
The {\rm Type 1} elements $A_{G',L,\alpha}$ belong to $K'(G,C_p)$.
\end{lemma}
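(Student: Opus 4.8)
The plan is to show that the Type 1 generator $A_{G',L,\alpha}$ can be rewritten as a $\mathbb Z$-linear combination of elements that are manifestly `indufted' from sub-quotients isomorphic to $C_p\times C_p\times C_p$. The natural source of such sub-quotients is the pair $(K,N)=(G'\times C_p,\ L\times\rho)$ from the first case of Lemma~\ref{list}, where $\rho\colon L\to C_p$ satisfies $\rho(G'^p)=1$; indeed, since $L<G'$ with $G'/L\approx C_p\times C_p$ we have $G'^p\le L$, so for \emph{any} homomorphism $\alpha\colon G'\to C_p$ the restriction $\rho=\alpha|_L$ automatically kills $G'^p$, and then $(G'\times C_p)/(L\times\alpha|_L)$ is a quotient of $G'\times C_p$ of order $p^3$; because $G'/L\approx C_p\times C_p$ and the $C_p$-factor survives, this quotient is elementary abelian, hence $\approx C_p\times C_p\times C_p$. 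So the element $\text{Induf}(\Theta_{(G'\times C_p)/(L\times\alpha|_L)})$ lies in $K'(G,C_p)$ by definition.

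First I would write out $\text{Induf}(\Theta_{(G'\times C_p)/(L\times\alpha|_L)})$ explicitly using \eqref{gen}: its leading term is $p(G'\times C_p)$, its constant term is $L\times\alpha|_L$, and the middle sum runs over the subgroups $C'$ with $L\times\alpha|_L \le C' \le G'\times C_p$ and $C'/(L\times\alpha|_L)\approx C_p$. By Goursat/Lemma~\ref{list1} these intermediate $C'$ are of three kinds: the graphs $M\times\alpha|_M$ for the $p+1$ subgroups $L<M<G'$ with $M/L\approx C_p$ (these give the "horizontal" copies, one of which carries $\alpha$ itself extended, i.e. includes the subgroup on which $\alpha$ becomes trivial only if $\alpha|_L$ extends appropriately — I would track this carefully), together with $L\times C_p$ (the "vertical" one) and possibly $G'\times\alpha'$ for extensions $\alpha'$ of $\alpha|_L$ to $G'$. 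Enumerating these $p+1$ index-$p$ subgroups of a $C_p\times C_p\times C_p$ containing a fixed $C_p$, and matching them against the terms $C'\times\alpha$ appearing in $A_{G',L,\alpha}$ and against the terms of a Type~3 element $D_{G',L}$ (which is `indufted' from $(G'\times C_p)/(L\times C_p)$, a quotient of order $p^2$, not $p^3$ — so Type~3 will itself need a separate lemma, or I fold it in here), is the combinatorial heart of the argument.

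The key step is therefore an identity of the shape
\begin{align*}
A_{G',L,\alpha} \;=\; \sum_{\alpha'\colon G'\to C_p,\ \alpha'|_L=\alpha|_L} c_{\alpha'}\,\text{Induf}\big(\Theta_{(G'\times C_p)/(L\times\alpha'|_L)}\big) \;+\; (\text{correction terms in }K'(G,C_p)),
\end{align*}
where the correction terms should be combinations of lower elements — Type~3 elements $D_{G',L}$ and genuinely $C_p\times C_p$-level pieces — that can be recognized as lying in $K'(G,C_p)$, either because they have already been disposed of or because they are themselves `indufted' from a $C_p\times C_p\times C_p$ sub-quotient after a further inflation. Concretely I expect the clean statement to be that $A_{G',L,\alpha}$ differs from a single $\text{Induf}(\Theta_{(G'\times C_p)/(L\times\alpha|_L)})$ by a $\mathbb Z$-combination of $B$- and $D$-type elements with matching $(G',L)$, so an induction on $[G':1]$ (or on the subgroup lattice of $G$) closes the loop once the analogous statements for Types~2 and~3 are in hand.

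The main obstacle will be bookkeeping the $p+1$ intermediate subgroups $C'$ between $L\times\alpha|_L$ and $G'\times C_p$ and correctly identifying which of them is the graph $G'\times\alpha$ (versus $G'\times\tilde\beta$ for other extensions, versus $L\times C_p$): the coefficient $p$ on the leading term means the "horizontal" and "vertical" copies must be separated and their $\alpha$-twists tracked, and a sign or an off-by-one in this enumeration would break the telescoping. A secondary subtlety is making sure the $C_p\times C_p\times C_p$ appearing here is a sub-\emph{quotient} of $G\times C_p$ in the precise sense of the theorem (a quotient $K/N$ of a subgroup $K\le G\times C_p$), which the pair $(G'\times C_p,\ L\times\alpha|_L)$ does satisfy since $G'\times C_p\le G\times C_p$.
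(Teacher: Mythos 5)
There is a genuine gap. You correctly identify the relevant sub-quotient $P=(G'\times C_p)/(L\times\alpha|_L)\approx C_p\times C_p\times C_p$ (and your verification that it is elementary abelian of order $p^3$ is fine), but you then set out to express $A_{G',L,\alpha}$ as a combination of the \emph{specific} indufted generators $\text{Induf}(\Theta_{K/N})$ of Proposition~\ref{teta} plus ``correction terms'' involving Type~3 elements $D_{G',L}$, to be handled ``because they have already been disposed of'' or by a further induction. This does not close: the Type~3 elements are not yet known to lie in $K'(G,C_p)$ at this stage (the paper only controls $(p+1)D_{G',L}$ modulo $K'(G,C_p)$, and only later, in Lemma~\ref{type3}, using Type~2 elements), so the argument is circular. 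There is also a structural problem with the shape of your identity: each $\text{Induf}(\Theta_{(G'\times C_p)/(L\times\alpha'|_L)})$ contains non-graph terms such as $L\times C_p$ and $G'\times C_p$, hence does not lie in $B(G,C_p)$, let alone in $K(G,C_p)$; since $K'(G,C_p)$ is by definition generated by elements \emph{of $K(G,C_p)$} that are indufted from $C_p^{\,3}$ sub-quotients, a sum of such terms that happens to land in $K(G,C_p)$ is not automatically in $K'(G,C_p)$.

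The missing idea is that $K'(G,C_p)$ allows indufting an \emph{arbitrary} element of $K(P)$, not just the primitive relation $\Theta_P$. Every subgroup occurring in $A_{G',L,\alpha}$, namely $L\times\alpha$, the $C'\times\alpha$, and $G'\times\alpha$, lies between $N=L\times\alpha|_L$ and $K=G'\times C_p$, so under the bijection $S\mapsto S/N$ of \eqref{gen} the element $A_{G',L,\alpha}$ is exactly $\text{Induf}(\Theta'_P)$ where
\begin{align*}
\Theta'_P=(L\times\alpha)/N-\sum_{L<C'<G'}(C'\times\alpha)/N+p\,(G'\times\alpha)/N
\end{align*}
is itself a $P$-relation (it is the standard $C_p\times C_p$-relation of the subgroup $(G'\times\alpha)/N\approx C_p\times C_p$ of $P$, induced up to $P$). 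Since $A_{G',L,\alpha}$ consists only of graph subgroups it lies in $B(G,C_p)$, and it lies in $K(G\times C_p)$ as the indufted $\Theta$-relation of the pair $(G'\times\alpha,L\times\alpha)$ from Lemma~\ref{list}; hence it lies in $K(G,C_p)$ and is indufted from the $C_p^{\,3}$ sub-quotient $P$, which is precisely membership in $K'(G,C_p)$. No enumeration of the $p+1$ intermediate subgroups, no correction terms, and no induction are needed.
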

\begin{proof} For each sub-quotient $G'/L\approx C_p\times C_p$ of $G$ and homomorphism $\alpha:G'\to C_p$ we have the following isomorphism
 $$P=(G'\times C_p)/(L\times\alpha)\approx C_p\times C_p\times C_p,
 $$ 
which comes from $\varphi:G'\to G'/L\approx C_p\times C_p$ by sending $(x,c)\in G'\times C_p$ to the element $(\varphi(x),c\rho(x)^{-1})\in C_p\times C_p\times C_p.$
In this context, the element $A_{G',L,\alpha}$ is of the form $A_{G',L,\alpha}=\text{Induf}(\Theta'_P)\in K(G,C_p)$ for some $\Theta'_P\in K(P)$.
\end{proof}

\begin{corollary}\label{torsion}
For each $x\in K(G,C_p)$, either $x$ or $px$ belongs to $K'(G,C_p)$.
\end{corollary}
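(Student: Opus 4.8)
The plan is to prove the stronger statement that every $x\in K(G,C_p)$ already lies in $K'(G,C_p)$, by checking it on generators. By Theorem \ref{brauer} applied to the abelian $p$-group $G\times C_p$ (recall $K(G,C_p)\subseteq K(G\times C_p)$), every $x\in K(G,C_p)$ is an integral combination of the Type 1, Type 2 and Type 3 generators, so it suffices to put each of these in $K'(G,C_p)$. Type 1 is exactly Lemma \ref{type1}. We may assume $G$ is non-cyclic, since for $G$ cyclic $K(G,C_p)=0$.

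The tool is two explicit Brauer relations of $P\cong C_p\times C_p\times C_p$. Calling the order-$p$ and order-$p^2$ proper subgroups ``lines'' $\ell$ and ``planes'' $\pi$, these are the planar relation $\vartheta^\flat_\pi=1-\sum_{\ell\subseteq\pi}\ell+p\,\pi$ attached to a plane $\pi$ (sum over its $p+1$ lines) and the linear relation $\vartheta^\sharp_\ell=\ell-\sum_{\pi\supseteq\ell}\pi+p\,P$ attached to a line $\ell$ (sum over the $p+1$ planes through $\ell$). Both lie in $K(P)$ by a one-line character check: $f_P$ evaluated on $\mathbb{Q}[P/H]$ at $g\in P$ gives $[P:H]$ when $g\in H$ and $0$ otherwise, and the incidences of a fixed $g\neq 1$ with the relevant lines and planes make the alternating sums cancel. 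Note $\vartheta^\flat_\pi$ is the relation already used in Lemma \ref{type1}, via $P=(G'\times C_p)/(L\times\alpha)$.

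For Type 3 I would take $P=(G'\times C_p)/(L\times\epsilon)\cong (G'/L)\times C_p\cong C_p^{3}$; then $(L\times C_p)/(L\times\epsilon)$ is a line $\ell$, the $p+1$ planes through it are precisely the images of the $C'\times C_p$ with $L<C'<G'$, and $D_{G',L}=\mathrm{Ind}^{G\times C_p}_{G'\times C_p}\mathrm{Inf}^{G'\times C_p}_{P}(\vartheta^\sharp_\ell)\in K'(G,C_p)$. For Type 2 with $G'$ of $p$-rank $\geq 2$, the subgroup $G'^{p}$ is proper in $C$, so one can pick $N$ with $G'^{p}\times\epsilon\leq N\leq C\times\beta$ and $[C\times\beta:N]=p$; then $(G'\times C_p)^{p}=G'^{p}\times\epsilon\leq N$ makes $P:=(G'\times C_p)/N\cong C_p^{3}$, all subgroups occurring in $B_{G',C,\beta}$ contain $N$, $(C\times\beta)/N$ is a line $\ell$, and the images of $C\times C_p$ and of the $p$ extensions $G'\times\tilde\beta$ are exactly the $p+1$ planes through $\ell$, so $B_{G',C,\beta}=\mathrm{Induf}_P(\vartheta^\sharp_\ell)\in K'(G,C_p)$.

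The remaining, genuinely harder case, which I expect to be the main obstacle, is Type 2 with $G'$ cyclic: then necessarily $C=G'^{p}$, $\beta=\epsilon$, and $G'\times C_p\cong C_{p^{k}}\times C_p$ has $p$-rank $2$, so there is no $C_p^{3}$ sub-quotient inside $G'\times C_p$ and one has to enlarge $G'$. Since $G$ is non-cyclic, $\Omega_1(G)$ has rank $\geq 2$ while $\Omega_1(G')$ has rank $1$, so there is an element $u\in G\setminus G'$ of order $p$; put $H=G'\times\langle u\rangle\cong C_{p^{k}}\times C_p$, which contains $G'$ with index $p$ and as a direct factor, and let $\tilde C=G'^{p}\times\epsilon\leq H\times C_p$. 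Then $(H\times C_p)^{p}=\tilde C$, so $P=(H\times C_p)/\tilde C\cong C_p^{3}$; checking that $\overline{C\times\epsilon}=0$, that $\overline{C\times C_p}$ and the $\overline{G'\times\tilde\beta_m}$ are $p+1$ distinct lines, and that these fill the plane $\pi=\overline{G'\times C_p}$, identifies $B_{G',C,\epsilon}$ with $\mathrm{Induf}_P(\vartheta^\flat_\pi)\in K'(G,C_p)$. The real work is organizing this last reduction and verifying the line/plane incidences uniformly (and, in practice, the bookkeeping of how the non-graph terms of the Type 2 and Type 3 contributions cancel inside $B(G,C_p)$). Once this is carried out, every generator, hence every $x\in K(G,C_p)$, lies in $K'(G,C_p)$, which in particular yields the stated dichotomy, with the factor $p$ never actually needed.
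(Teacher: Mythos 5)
Your proposal contains a genuine and fatal gap: you claim that the Type 2 and Type 3 generators individually belong to $K'(G,C_p)$, and this is false. By definition $K'(G,C_p)$ is a submodule of $K(G,C_p)=K(G\times C_p)\cap B(G,C_p)$, and $B(G,C_p)$ is spanned only by the graph subgroups $K\times\rho$ (Proposition \ref{rbbasis}). The elements $D_{G',L}$ and $B_{G',C,\beta}$ have nonzero coefficients on the non-graph basis elements $L\times C_p$, $C'\times C_p$, $C\times C_p$, $G'\times C_p$ (these contain $1\times C_p$, so the corresponding bisets are not right $C_p$-free), hence they do not lie in $B(G,C_p)$ and a fortiori not in $K'(G,C_p)$. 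Your incidence computations are correct as far as they go --- $\vartheta^\sharp_\ell$ and $\vartheta^\flat_\pi$ are indeed relations of $C_p^3$, and $D_{G',L}$ and $B_{G',C,\beta}$ are indeed `indufted' from $C_p\times C_p\times C_p$ sub-quotients of $G\times C_p$ --- but that only places them in $K(G\times C_p)$; being indufted from $C_p^3$ is necessary but not sufficient for membership in $K'(G,C_p)$. If your argument were valid it would prove Theorem \ref{mainbb} in one stroke and render Sections 6 and 7 of the paper superfluous; the entire point of the signature homomorphism, the resolutions, and the elimination algorithm is that only certain integral combinations of Type 2 and Type 3 elements land in $B(G,C_p)$, and one must show that \emph{those combinations} lie in $K'(G,C_p)$ modulo what is already known (Lemma \ref{type3}, Proposition \ref{reduction}, Lemma \ref{zero}).

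You also never use the ingredient the paper's own (one-line) proof rests on. Theorem \ref{mainprop} shows that $K(G,C_p)[\tfrac{1}{p}]$ has a basis consisting of the elements $\text{\rm Induf}(\Theta_{K/N'})$ with $K=G'\times\rho$ a graph, i.e.\ precisely the Type 1 elements $A_{G',L',\rho}$; hence a $p$-power multiple of any $x\in K(G,C_p)$ (the paper asserts $x$ or $px$) is an integral combination of Type 1 elements, and Lemma \ref{type1} places these in $K'(G,C_p)$. That is the intended argument, and it is where the factor $p$ in the statement comes from. Your route, by contrast, cannot be repaired without essentially redoing Sections 6 and 7.
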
 

\begin{proof} By Theorem \ref{mainprop} and its proof we know that for each $x\in K(G,C_p)$ either $x$ or $px$ is a $\mathbb Z$-linear combination of Type 1 elements and we apply Lemma \ref{type1}.
\end{proof}

\begin{lemma}\label{type3} For each pair $L<G'<G$ with $G'/L\approx C_p\times C_p$ we have
\begin{align}\label{eqtype3} (p+1)D_{G',L}\equiv \sum_{L<C'<G'} B_{G',C',\epsilon}-\sum_{L<C'<G'} B_{C',L,\epsilon}\mod K'(G,C_p)\end{align}
Here $C'$ runs over the subgroups $L<C'<G'$ with $C'/L\approx C_p$.
\end{lemma}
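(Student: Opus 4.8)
The plan is to recognize the element
$$
\Delta:=(p+1)D_{G',L}-\sum_{L<C'<G'}B_{G',C',\epsilon}+\sum_{L<C'<G'}B_{C',L,\epsilon}
$$
as a single relative Brauer $(G,C_p)$-relation `indufted' from one sub-quotient of $G\times C_p$ isomorphic to $C_p\times C_p\times C_p$, which is exactly the type of generator of $K'(G,C_p)$; then $\Delta\in K'(G,C_p)$ is the assertion. The argument follows the pattern of the proof of Lemma \ref{type1}. First I would note that $D_{G',L}$, $B_{G',C',\epsilon}$, $B_{C',L,\epsilon}$ all lie in $K(G\times C_p)$, hence so does $\Delta$. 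Next I would separate out, in each summand, the basis elements of the shape $S\times C_p$ (the ones that are not graphs of homomorphisms into $C_p$, hence not in $B(G,C_p)$): reading off the definitions, the $S\times C_p$-part of $(p+1)D_{G',L}$ is $(p+1)(L\times C_p)-(p+1)\sum_{C'}(C'\times C_p)+p(p+1)(G'\times C_p)$, that of $\sum_{C'}B_{G',C',\epsilon}$ is $-\sum_{C'}(C'\times C_p)+p(p+1)(G'\times C_p)$, and that of $\sum_{C'}B_{C',L,\epsilon}$ is $-(p+1)(L\times C_p)+p\sum_{C'}(C'\times C_p)$. These combine so that all $S\times C_p$ terms cancel in $\Delta$; hence $\Delta\in B(G,C_p)\cap K(G\times C_p)=K(G,C_p)$.

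The key observation is then that every basis element surviving in $\Delta$ is a subgroup $S$ with $L\times\epsilon\le S\le G'\times C_p$: after cancellation the survivors are $L\times\epsilon$, the graphs $C'\times\epsilon$, the graphs $G'\times\tilde\beta$ (with $\tilde\beta:G'\to C_p$ trivial on some $C'\supseteq L$, hence on $L$) coming from the $B_{G',C',\epsilon}$, and the graphs $C'\times\bar\beta$ (with $\bar\beta:C'\to C_p$ trivial on $L$) coming from the $B_{C',L,\epsilon}$; every one of these contains $L\times\epsilon=L\times 1$ and lies inside $G'\times C_p$. Therefore $\Delta=\ind_{G'\times C_p}^{G\times C_p}\inff_P^{G'\times C_p}(\theta_0)$ for a unique $\theta_0\in B(P)$, where $P=(G'\times C_p)/(L\times\epsilon)\approx(G'/L)\times C_p\approx C_p\times C_p\times C_p$.

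Finally I would promote $\theta_0$ to $K(P)$: by Proposition \ref{nat}, $f_{G\times C_p}(\Delta)=\ind_{G'\times C_p}^{G\times C_p}\inff_P^{G'\times C_p}f_P(\theta_0)=0$ since $\Delta\in K(G\times C_p)$, and because $G\times C_p$ is Abelian both $\inff_P^{G'\times C_p}$ and $\ind_{G'\times C_p}^{G\times C_p}$ are injective on rational representation rings (inflation is always injective, and for an Abelian group $\ind_H^{\Gamma}\psi$ is the sum of the extensions of $\psi$, so distinct characters of $H$ induce up to representations with no common constituent); hence $f_P(\theta_0)=0$. Thus $\Delta=\text{Induf}(\theta_0)$ with $\theta_0\in K(P)$ and $P\approx C_p\times C_p\times C_p$, so $\Delta\in K'(G,C_p)$.

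The step I expect to be the main obstacle is the middle bookkeeping: checking that the coefficients $p$ and $p+1$ in the Type 2 and Type 3 elements are exactly right for the $S\times C_p$ terms to cancel, and that every surviving graph term has its homomorphism trivial on $L$, so that all of $\Delta$ factors through the \emph{single} quotient $P\approx C_p^3$. The concluding injectivity remark is routine but essential --- it is what turns ``$\Delta$ is indufted from a $C_p^3$ and lies in $K(G\times C_p)$'' into ``$\Delta$ is indufted from a Brauer relation of that $C_p^3$''.
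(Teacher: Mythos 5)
Your proposal is correct and follows essentially the same route as the paper: the same cancellation of the $S\times C_p$ terms (the paper phrases this as a congruence computation mod $B(G,C_p)$), the same conclusion that the difference lies in $K(G,C_p)=B(G,C_p)\cap K(G\times C_p)$, and the same identification of all surviving terms as `indufted' from $P=(G'\times C_p)/(L\times\epsilon)\approx C_p\times C_p\times C_p$. Your closing step --- using injectivity of $\inff$ and of $\ind$ on rational representation rings of Abelian groups to upgrade $\theta_0\in B(P)$ to $\theta_0\in K(P)$ --- is a detail the paper leaves implicit, and it is a worthwhile addition.
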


\begin{proof}
Recall that $B(G,C_p)$ is generated by graph-subgroups $K\times \rho <G\times C_p$ as in the Proposition \ref{rbbasis}. In this context, for each of the $p+1$ subgroups $L<C'<G'$ (see the next section) with $C'/L\approx C_p$, we have 
\begin{align*}
B_{G',C',\epsilon}&\equiv&-C'\times C_p+pG'\times C_p\mod B(G,C_p)\\
-B_{C',L,\epsilon}&\equiv&L\times C_p-pC'\times C_p\mod B(G,C_p)\\
D_{G',L} &\equiv&L\times C_p-\sum_{L<C'<G'} C'\times C_p+pG'\times C_p
\end{align*}
Hence, if we apply the operator $\sum_{L<C'<G'}$ to the first two equations we get
\begin{align*}
\sum_{L<C'<G'} B_{G',C',\epsilon}&\equiv&-\sum_{L<C'<G'} C'\times C_p+(p+1)pG'\times C_p\mod B(G,C_p)\\
-\sum_{L<C'<G'} B_{C',L,\epsilon}&\equiv&(p+1)L\times C_p-p\sum_{L<C'<G'} C'\times C_p\mod B(G,C_p)
\end{align*}
By definitions, $K(G.C_p)=B(G,C_p)\cap K(G\times C_p)$ where $K(G\times C_p)$ contains the Type 2 and Type 3 elements. Hence, by adding the last two equations, we get the relation \eqref{eqtype3} mod $K(G,C_p)$ where all the terms are 'indufted' from the sub-quotient $(G'\times C_p)/(L\times\epsilon)\approx C_p\times C_p\times C_p$. This proves \eqref{eqtype3} mod $K'(G,C_p)$. 
\end{proof}

Now we can reduce the proof of Theorem \ref{mainbb} to elements of $K(G,C_p)$ that are $\mathbb Z$-linear combinations  of Type 2 generators. Here is the precise statement 

\begin{proposition}\label{reduction} 
Each $x\in K(G,C_p)$ is a $\mathbb Z$-linear combination $\mod K'(G,C_p)$ of {\rm Type 2}-elements of the form $B_{G',C,\epsilon}$ with $G'/C\approx C_p$. 
\end{proposition}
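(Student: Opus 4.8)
The plan is to start from what we already know and whittle away the remaining generators. By Theorem~\ref{brauer} every $x\in K(G,C_p)$ is a $\mathbb Z$-linear combination of elements $\text{Induf}(\Theta_{K/N})$ with $(K,N)$ running through the pairs classified in Lemma~\ref{list}, so modulo $K'(G,C_p)$ it is a $\mathbb Z$-linear combination of Type 1, Type 2 and Type 3 elements. Lemma~\ref{type1} already disposes of the Type 1 elements: they lie in $K'(G,C_p)$ and so contribute nothing modulo $K'(G,C_p)$. Thus the only work is to rewrite the Type 3 contributions in terms of Type 2 elements modulo $K'(G,C_p)$; Lemma~\ref{type3} does almost exactly this.

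The subtlety is that Lemma~\ref{type3} gives the relation only after multiplying $D_{G',L}$ by $p+1$, i.e. it expresses $(p+1)D_{G',L}$, not $D_{G',L}$ itself, as a combination of Type 2 elements modulo $K'(G,C_p)$. To clear this factor I would argue $p$-locally and integrally in two complementary steps. First, since $\gcd(p+1,p)=1$ there are integers $a,b$ with $a(p+1)+bp=1$, so $D_{G',L}\equiv a(p+1)D_{G',L}+bpD_{G',L}$; the first summand is handled by Lemma~\ref{type3}, and for the second I would use Corollary~\ref{torsion}, which says $pD_{G',L}$ — being $p$ times an element of $K(G,C_p)$ — lies in $K'(G,C_p)$, hence vanishes modulo $K'(G,C_p)$. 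Combining, $D_{G',L}$ itself is congruent modulo $K'(G,C_p)$ to a $\mathbb Z$-linear combination of Type 2 elements $B_{G',C',\epsilon}$ and $B_{C',L,\epsilon}$.

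Putting these pieces together: given $x\in K(G,C_p)$, write it via Theorem~\ref{brauer} as a $\mathbb Z$-combination of Induf-generators, discard the Type 1 part using Lemma~\ref{type1}, and replace each Type 3 summand $D_{G',L}$ by a $\mathbb Z$-combination of Type 2 elements with trivial coefficient homomorphism using the $p+1$-clearing argument of the previous paragraph. What remains is a $\mathbb Z$-linear combination of Type 2 elements $B_{G',C,\beta}$ modulo $K'(G,C_p)$. Finally one must argue that the Type 2 elements appearing can be taken with $\beta=\epsilon$: a Type 2 element $B_{G',C,\beta}$ with $\beta$ nontrivial is 'indufted' from $(G'\times C_p)/(C\times\beta)$, and one shows (as in the proof of Lemma~\ref{type1}, using that $\beta(G'^p)=1$) that twisting by $\beta$ identifies this sub-quotient with one of the standard $C_p\times C_p\times C_p$ shapes so that $B_{G',C,\beta}-B_{G',C,\epsilon}$ — or $B_{G',C,\beta}$ together with a compensating Type 1 term — lies in $K'(G,C_p)$; hence modulo $K'(G,C_p)$ only $\beta=\epsilon$ survives.

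The main obstacle is the $p+1$ coefficient in Lemma~\ref{type3}. Everything else is bookkeeping over the finite list of Goursat subgroups, but clearing $p+1$ forces one to combine the $p+1$-relation of Lemma~\ref{type3} with the $p$-torsion statement of Corollary~\ref{torsion} via a Bézout identity, and one has to be careful that the integers $a,b$ produce honest integral (not merely $\mathbb Z[\tfrac1p]$-linear) combinations — which they do, precisely because $p+1$ and $p$ are coprime. A secondary point requiring care is confirming that reducing the coefficient homomorphism of the Type 2 generators to $\beta=\epsilon$ really only introduces Type 1 (hence $K'(G,C_p)$) corrections, which again rests on the condition $\beta(G'^p)=1$ built into the definition of Type 2.
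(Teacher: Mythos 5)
Your overall strategy (decompose $x$ into Type 1, 2, 3 parts; kill Type 1 by Lemma \ref{type1}; convert Type 3 into Type 2 via Lemma \ref{type3}; finally reduce the coefficient homomorphism $\beta$ to $\epsilon$) is the paper's strategy, and your final step comparing $B_{G',C,\beta}$ with $B_{G',C,\epsilon}$ is essentially the paper's. But there is a genuine gap in your handling of the factor $p+1$. You apply the B\'ezout identity to an individual Type 3 generator and invoke Corollary \ref{torsion} to conclude that $pD_{G',L}$ lies in $K'(G,C_p)$, ``being $p$ times an element of $K(G,C_p)$.'' This is false: $D_{G',L}$ is \emph{not} an element of $K(G,C_p)$. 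It lies in $K(G\times C_p)$, but its support consists of the non-graph subgroups $L\times C_p$, $C'\times C_p$, $G'\times C_p$, so it does not even lie in $B(G,C_p)$, and $K(G,C_p)=K(G\times C_p)\cap B(G,C_p)$. Consequently $pD_{G',L}$ cannot belong to $K'(G,C_p)\subseteq B(G,C_p)$ either: its coefficient on the basis element $L\times C_p$ of $B(G\times C_p)$ is $p\neq 0$. The whole point of the coefficient $p+1$ in Lemma \ref{type3} is that only in the combination $(p+1)D_{G',L}-\sum B_{G',C',\epsilon}+\sum B_{C',L,\epsilon}$ do the non-graph terms cancel (there are exactly $p+1$ intermediate subgroups $C'$), so the factor $p+1$ cannot be cleared generator by generator.

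The repair is to run your B\'ezout idea globally rather than locally: write $x=(p+1)x-px$ for the given $x\in K(G,C_p)$. Since $x$ itself \emph{is} in $K(G,C_p)$, Corollary \ref{torsion} legitimately gives $px\in K'(G,C_p)$, while $(p+1)x$ is congruent mod $K'(G,C_p)$ to a Type 2 combination because its Type 1 part dies by Lemma \ref{type1} and its Type 3 part, now carrying the factor $p+1$, converts by Lemma \ref{type3}. This is exactly the paper's proof; with that one substitution the rest of your argument goes through.
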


\begin{proof}
Each element $x\in K(G,C_p)$ is a $\mathbb Z$-linear combination of the form
$$
x=\text{ Type 1 combination }+\text{ Type  2 combination } + \text{ Type 3 combination }
$$
By Lemma \ref{type3}, we have the following reduction $$(p+1)(\text{Type 3 combination})\equiv \text{ Type 2 combination }\mod K'(G,C_p)$$ 
By Corollary \ref{torsion}, we have $px\equiv 0\mod K'(G,C_p)$. By putting together the equations above and Lemma \ref{type1}, we get
$$
x=(p+1)x-px\equiv  \text{ Type 2 combination }\mod K'(G,C_p)
$$
If $G'<G$ is cyclic, then $C=G'^p<G'$ is the unique subgroup of index $p$.  In this case, $B_{G',C,\beta}=B_{G',C,\epsilon}$. 
If $G'<G$ is non-cyclic and $\beta:C\to C_p$ with $C<G'$ is such that $G'/C\approx C_p$ and $\beta\neq \epsilon$, then the difference
\begin{align*}
B_{G',C,\beta}-B_{G',C,\epsilon}= C\times\beta-\sum_{\tilde{\beta}|C=\beta}G'\times\tilde{\beta} - C\times\epsilon+\sum_{\tilde{\epsilon}|C=\epsilon}G'\times\tilde{\epsilon}
\end{align*}
is 'indufted' from $(G'\times C_p)/(L\times \beta)\approx  C_p\times C_p\times C_p$ if we take $L=\ker\beta<C$. This shows that the difference belongs to $K'(G,C_p)$ and it concludes the proof.
\end{proof}

Now we are ready to prove Theorem \ref{mainbb}. To that end, let $x\in K(G,C_p)$ be given. By Proposition \ref{reduction} we can represent $x$ by a Type 2 combination mod $K'(G,C_p)$. In what follows we will show how to eliminate all the Type 2 elements from that combination, concluding that $x\in K'(G,C_p)$. This proves Theorem \ref{mainbb}.

\section{The elimination algorithm}

The Type 2 elements $B_{G',L,\epsilon}$ generate a $\mathbb Z$-submodule $\mathcal M\subset K(G\times C_p)$ and each such generator is uniquely determined by a pair of subgroups $L<G'$ with $G'/L\approx C_p$. Hence, we can drop the $\epsilon$ from the notation $B_{G'L}=B_{G',L,\epsilon}$. Moreover, its image mod $B(G,C_p)$ is given by the formula
\begin{align}\label{signature}
B_{G'L}\equiv -L\times C_p+pG'\times C_p\mod B(G,C_p) 
\end{align}

\begin{definition}
The \emph{signature} homomorphism $\sigma:B(G\times C_p)\to B(G)$ is defined by sending  $L\times C_p\mapsto L$  for $L<G$ and any other basis elements to zero.
\end{definition}

For example, we say that the Type 2 generator $B_{G'L}$ has the signature $-L+pG$. 

\begin{lemma} The signature homomorphism $\sigma:B(G\times C_p)\to B(G)$ is surjective and its kernel is $B(G,C_p)$.
\end{lemma}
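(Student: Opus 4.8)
The claim has three parts: $\sigma$ is a well-defined homomorphism, it is surjective, and its kernel equals $B(G,C_p)$. The first part is immediate from the definition since $\sigma$ is prescribed on the $\mathbb Z$-basis of $B(G\times C_p)$ given by conjugacy classes of subgroups of $G\times C_p$ (Proposition \ref{bbasis}), so I would not dwell on it. For surjectivity, recall from \eqref{list1} and Proposition \ref{bbasis} that the subgroups $L\times C_p$ with $L\le G$ are among the basis elements of $B(G\times C_p)$, and under $\sigma$ they map precisely to the subgroups $L\le G$, which form a $\mathbb Z$-basis of $B(G)$ by Proposition \ref{bbasis}. Hence $\sigma$ hits every basis element of $B(G)$ and is surjective; in fact it admits the obvious section $L\mapsto L\times C_p$.

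For the kernel, the plan is to show both inclusions. The inclusion $B(G,C_p)\subseteq\ker\sigma$ follows from Proposition \ref{rbbasis}: a $\mathbb Z$-basis of $B(G,C_p)$ consists of the graph subgroups $K\times\rho$ with $\rho:K\to C_p$ a homomorphism from a subgroup $K\le G$. Such a subgroup is of the shape $L\times C_p$ only if it is not a graph of a function $K\to C_p$ (the subgroup $L\times C_p$ has order $|L|\cdot p$ whereas a graph $K\times\rho$ has order $|K|$, so $L\times C_p$ is never a graph), so $\sigma$ sends every one of these basis elements to zero. The reverse inclusion $\ker\sigma\subseteq B(G,C_p)$ is where the counting has to be done carefully. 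Using the explicit classification \eqref{list1} of subgroups of $G\times C_p$, the basis of $B(G\times C_p)$ splits into the subgroups of the form $L\times C_p$ (on which $\sigma$ acts by the bijection $L\times C_p\mapsto L$ onto the basis of $B(G)$) and all the rest (the subgroups $1\times\epsilon$, $L\times\epsilon$, $1\times C_p$, and the graphs $L\times\lambda$), on which $\sigma$ vanishes. Therefore $\ker\sigma$ is exactly the free $\mathbb Z$-module on the basis elements of the second kind. But those basis elements — $1\times\epsilon=1\times\epsilon$, $L\times\epsilon$, $1\times C_p$, and $L\times\lambda$ — are precisely the graph subgroups $K\times\rho$ of Proposition \ref{rbbasis}: $L\times\epsilon$ is the graph of the trivial homomorphism $L\to C_p$, $1\times C_p$ is the graph of a trivial map but viewed with $K=1$ (equivalently, it is $G\times_\rho C_p$ with $\rho:1\to C_p$), and $L\times\lambda$ is the graph of the surjection $\lambda$. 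Hence $\ker\sigma$ is spanned by exactly the $B(G,C_p)$-basis, giving $\ker\sigma=B(G,C_p)$.

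One subtlety I would be careful about: I must confirm that the ``rest'' of the basis of $B(G\times C_p)$ — namely everything other than the $L\times C_p$ — coincides exactly with the basis of $B(G,C_p)$ inside $B(G\times C_p)$, with no element double-counted and none missing. This is really a matching of the two Goursat-style enumerations: the subgroups $L\times C_p$ correspond to the Goursat data with $A=C_p$, $B=C_p$ (so $K/N\cong 1$), while the graph subgroups $K\times\rho$ correspond to $A=C_p$, $B=1$ with $\theta:K/N\xrightarrow{\sim}C_p$ (when $\rho$ is surjective, i.e.\ the case $L\times\lambda$), together with the degenerate cases $A=B=1$ (yielding $L\times\epsilon$, the trivial graph) and $K=1$ (yielding $1\times C_p$). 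The expected main obstacle is thus purely bookkeeping — making sure this partition of the subgroup lattice of $G\times C_p$ is complete and non-overlapping — but since the paper has already set up exactly this dichotomy in \eqref{list1} and in the proof of Lemma \ref{branks}, the argument is short. I would phrase the proof as: ``By \eqref{list1}, the basis of $B(G\times C_p)$ is the disjoint union of $\{L\times C_p: L\le G\}$ and the graph subgroups; $\sigma$ restricts to a bijection from the first set onto the basis of $B(G)$ and kills the second set, which by Proposition \ref{rbbasis} is the basis of $B(G,C_p)$. The statement follows.''
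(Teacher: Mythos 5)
Your overall strategy is the same as the paper's: exhibit the section $\ell(L)=L\times C_p$ to get surjectivity, and identify the complementary part of the basis of $B(G\times C_p)$ with the basis of $B(G,C_p)$ to get the kernel. However, the bookkeeping step that you yourself single out as the main content of the proof goes wrong at exactly one point: the subgroup $1\times C_p$. You place it among the basis elements on which $\sigma$ vanishes and assert that it is the graph subgroup attached to $\rho\colon 1\to C_p$. Both claims are false. The graph of $\rho\colon K\to C_p$ is $K\times\rho=\{(k,\rho(k)):k\in K\}$ and has order $|K|$ --- a fact you use yourself two sentences earlier to argue that no $L\times C_p$ is a graph --- so for $K=1$ the graph is the trivial subgroup $1\times\epsilon$, not $1\times C_p$, which has order $p$. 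Equivalently, the coset space $(G\times C_p)/(1\times C_p)$ is not right $C_p$-free, so it does not represent a basis element of $B(G,C_p)$ by Proposition \ref{rbbasis}. Moreover, $1\times C_p$ is simply the case $L=1$ of $L\times C_p$, so $\sigma$ sends it to the class of the trivial subgroup $1\le G$, a nonzero basis element of $B(G)$; it does not lie in $\ker\sigma$. If your classification were taken at face value, the lemma would actually fail: nothing would map onto $[G/1]\in B(G)$, so $\sigma$ would not be surjective, and $\ker\sigma$ would strictly contain $B(G,C_p)$.

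The repair is immediate and lands you on the paper's own argument: the basis of $B(G\times C_p)$ is the disjoint union of $\{L\times C_p : L\le G\}$ (with $L=1$ and $L=G$ included), which $\sigma$ maps bijectively onto the basis of $B(G)$, and the graph subgroups $K\times\rho$ (namely $L\times\epsilon$ for $L\le G$ and $L\times\lambda$ for $\lambda$ surjective), which $\sigma$ kills and which form the basis of $B(G,C_p)$; this is exactly the paper's decomposition $B(G\times C_p)=B(G,C_p)\oplus\ell B(G)$. In Goursat terms, $1\times C_p$ belongs to the family $A=B=C_p$, $K=N$ (with $K=N=1$), not to any family with $A=C_p$, $B=1$, since there $K/N\cong 1\not\cong C_p$.
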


\begin{proof} Observe that $\sigma$ has a well defined section $\ell:B(G)\to B(G,C_p)$ sending $L\mapsto L\times C_p$ for $L<G$. Since the identity map  $\sigma\circ\ell:B(G)\to B(G)$ is surjective, so is $\sigma$. Moreover, by Proposition \ref{rbbasis} the $B(C\times C_p)$ is the direct sum of its submodules $B(G,C_p)$ and $\ell B(G)$. This shows that $\ker \sigma = B(G,C_p)$. 
\end{proof}

\begin{corollary}\label{injective}
The kernel of the restriction $\sigma:\mathcal M\to B(G)$ is $\mathcal M\cap K(G,C_p)$.
\end{corollary}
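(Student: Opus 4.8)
The plan is to unwind the statement into a short chain of elementary identities of submodules, relying on the lemma just proved together with the two structural facts recorded in the text: $\mathcal M\subseteq K(G\times C_p)$ and $K(G,C_p)=B(G,C_p)\cap K(G\times C_p)$. First I would observe that, for any submodule $\mathcal M\subseteq B(G\times C_p)$, the kernel of the restricted map $\sigma|_{\mathcal M}$ is simply $\mathcal M\cap\ker\sigma$. By the preceding lemma $\ker\sigma=B(G,C_p)$, so $\ker(\sigma|_{\mathcal M})=\mathcal M\cap B(G,C_p)$, and it remains only to identify this intersection with $\mathcal M\cap K(G,C_p)$.

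For that identification I would argue by double inclusion. The inclusion $\mathcal M\cap K(G,C_p)\subseteq\mathcal M\cap B(G,C_p)$ is immediate from $K(G,C_p)\subseteq B(G,C_p)$. Conversely, if $x\in\mathcal M\cap B(G,C_p)$, then $x\in\mathcal M\subseteq K(G\times C_p)$, so $x$ lies in $B(G,C_p)\cap K(G\times C_p)=K(G,C_p)$; hence $x\in\mathcal M\cap K(G,C_p)$. Equivalently, in one line,
\[
\ker(\sigma|_{\mathcal M})=\mathcal M\cap B(G,C_p)=\mathcal M\cap K(G\times C_p)\cap B(G,C_p)=\mathcal M\cap K(G,C_p),
\]
where the second equality uses $\mathcal M\subseteq K(G\times C_p)$ and the third is the defining identity for $K(G,C_p)$.

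There is no real obstacle here; the statement is a formal consequence of the lemma and of the two facts that each Type 2 generator $B_{G'L}=\text{Induf}(\Theta_{K/N})$ lies in $K(G\times C_p)$ (so $\mathcal M\subseteq K(G\times C_p)$) and that $K(G,C_p)=B(G,C_p)\cap K(G\times C_p)$. The only point I would make sure to state explicitly is the first of these, namely that $\mathcal M\subseteq K(G\times C_p)$: this is exactly the content of the discussion preceding the corollary, since each $B_{G'L}$ arises as $\text{Induf}$ of a Brauer relation $\Theta_{K/N}$ for a pair $(K,N)$ from Lemma \ref{list}, and $\text{Induf}$ carries $K(K/N)$ into $K(G\times C_p)$ by Proposition \ref{teta} together with the naturality of $f$ in Proposition \ref{nat}.
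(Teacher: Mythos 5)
Your proof is correct and follows essentially the same route as the paper: identify $\ker(\sigma|_{\mathcal M})=\mathcal M\cap B(G,C_p)$ via the preceding lemma, then use $\mathcal M\subseteq K(G\times C_p)$ and $K(G,C_p)=B(G,C_p)\cap K(G\times C_p)$ to conclude. Your explicit justification that $\mathcal M\subseteq K(G\times C_p)$ is a welcome addition but not a departure from the paper's argument.
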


\begin{proof}
By the previous lemma, the kernel of the restriction $\sigma|\mathcal M$ is $\mathcal M\cap B(G,C_p)$. By combining this fact with $\mathcal M\subset K(G\times C_p)$ and $K(G,C_p)=K(G\times C_p)\cap B(G,C_p)$ we get the statement $\ker \sigma|\mathcal M=\mathcal M\cap K(G,C_p)$. 
\end{proof}

\begin{definition}
We call a \emph{resolution} starting at a subgroup $L$ and ending at a subgroup $G'$ any chain of intermediate subgroups $L=G_q<G_{q-1}<...<G_1<G_0=G'$ such that each subgroup $G_i$ has index $p$ in the next subgroup $G_{i+1}$ of the chain. 
\end{definition}

Here are a couple of basic facts \cite{Red89}. Between any two comparable subgroups $L<G'$ of a finite $p$-group there is at least one resolution starting at $L$ and ending at $G'$. If $L$ has index $p^2$ in $G'$, then $G'/L\approx C_{p^2}$ if from $L$ to $G'$ there is only one resolution and $G'/L\approx C_p\times C_p$ if there are at least two resolutions. In the latter case, there will be exactly $p+1$ such resolutions. 

\begin{lemma}\label{telescope}
Given any resolution $L=G_e<G_{e-1}<...<G_1<G_0=G'$ starting at a subgroup  $L$ and ending at a subgroup $G'$ of the group $G$ we have 
$$
\sigma\left(B_{G_{e-1}G_e}+pB_{G_{e-2}G_{e-1}}+...+p^{e-1}B_{G_0G_1}\right)=-G_e+p^eG_0
$$
\end{lemma}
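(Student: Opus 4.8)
The plan is to prove the telescoping identity by a direct computation, using the explicit formula for the signature of a Type 2 generator. Recall from \eqref{signature} that
$$
B_{G_{i+1}G_i}\equiv -G_i\times C_p+pG_{i+1}\times C_p\mod B(G,C_p),
$$
and since the signature homomorphism $\sigma$ kills $B(G,C_p)$ (its kernel is $B(G,C_p)$) and sends $L\times C_p\mapsto L$, we get $\sigma\bigl(B_{G_{i+1}G_i}\bigr)=-G_i+pG_{i+1}$. This is the single computational fact the whole lemma rests on.

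Next I would substitute this into the given linear combination. Writing the combination as $\sum_{i=0}^{e-1} p^{\,i}\,B_{G_{e-i-1}\,G_{e-i}}$ (so the $i=0$ term is $B_{G_{e-1}G_e}$ and the $i=e-1$ term is $p^{e-1}B_{G_0G_1}$), linearity of $\sigma$ gives
$$
\sigma\Bigl(\sum_{i=0}^{e-1} p^{\,i}\,B_{G_{e-i-1}G_{e-i}}\Bigr)=\sum_{i=0}^{e-1} p^{\,i}\bigl(-G_{e-i}+pG_{e-i-1}\bigr)=\sum_{i=0}^{e-1}\bigl(-p^{\,i}G_{e-i}+p^{\,i+1}G_{e-i-1}\bigr).
$$
Now I would reindex the second half of each summand: the term $p^{\,i+1}G_{e-i-1}$ from index $i$ matches, up to sign, the term $p^{\,j}G_{e-j}$ from index $j=i+1$. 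So the sum telescopes, leaving only the $i=0$ contribution of the first type, $-p^0 G_{e}=-G_e$, and the $i=e-1$ contribution of the second type, $p^{e}G_{0}$. Hence the total is $-G_e+p^e G_0$, which is exactly the claimed value since $G_e=L$ and $G_0=G'$. (One should note $G_i\ne G$ along the chain so each $G_i\times C_p$ is genuinely a basis element on which $\sigma$ acts by the stated rule; this is automatic here since $L<G'\le G$ and the $G_i$ are proper in $G$ whenever $G'<G$, or $G'=G$ is allowed and then $\sigma$ sends $G\times C_p\mapsto G$ too — in fact the definition of $\sigma$ sends $L\times C_p\mapsto L$ for all $L\le G$ in the relevant range, so there is no issue.)

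There is essentially no obstacle here: the lemma is a formal telescoping sum once the signature formula for $B_{G'L}$ is in hand, and that formula is already recorded in \eqref{signature}. The only care needed is bookkeeping with the powers of $p$ and the indices of the resolution, making sure the ``$p G_{i+1}$'' term at one level cancels against the ``$-G_i$'' term weighted by $p$ at the adjacent level. I would present the computation exactly as above, collapsing the telescoping sum in one displayed line.
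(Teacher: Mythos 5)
Your proposal is correct and follows essentially the same route as the paper: read off $\sigma(B_{G'L})=-L+pG'$ from \eqref{signature} and collapse the telescoping sum, which is exactly the paper's (one-line) argument, just written out with the index bookkeeping made explicit.
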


\begin{proof}
Notice that $\sigma\left(B_{G_iG_{i+1}}\right)=-G_{i+1}+pG_i$ and apply a telescopic sum. 
\end{proof}

\begin{lemma}\label{zero}
Given any two resolutions starting at a subgroup $L$ and ending at a subgroup $G'$ of $G$, say
$$
L=G_e<G_{e-1}<...<G_1<G_0=G'\text{ and } L=H_e<H_{e-1}<...<H_1<H_0=G'
$$
we have the following relation
$$
\sum_{j=1}^e p^{e-j}B_{G_{j-1}G_j}\equiv \sum_{j=1}^e p^{e-j}B_{H_{j-1}H_j} \mod K'(G,C_p)
$$
\end{lemma}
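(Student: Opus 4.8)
The plan is to induct on the common length $e$ of the two resolutions, with the base case $e=1$ being trivial (both sides are literally $B_{G'L}$ since there is a unique subgroup of index $p$ strictly between $L$ and $G'$... no — more carefully, when $e=1$ there is only one resolution, so there is nothing to prove). The real content starts at $e=2$, which is the key local move: here $L$ has index $p^2$ in $G'$, there are two distinct resolutions through intermediate subgroups $C=G_1$ and $C'=H_1$, hence $G'/L\approx C_p\times C_p$, and there are exactly $p+1$ such intermediates $L<C<G'$. I would first establish the single relation
\begin{equation}\label{eq:localmove}
B_{G'C}+pB_{CL}\equiv B_{G'C'}+pB_{C'L}\mod K'(G,C_p)
\end{equation}
for any two of the $p+1$ intermediate subgroups. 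The natural route is to compare both sides with the ``barycentric'' combination $\tfrac{1}{p+1}\sum_{L<C''<G'}(B_{G'C''}+pB_{C''L})$, i.e. to show that $B_{G'C}+pB_{CL}$ is, modulo $K'(G,C_p)$, independent of the choice of $C''$. By Lemma \ref{type3}, $(p+1)D_{G',L}\equiv\sum_{C''}B_{G'C''}-\sum_{C''}B_{C''L}\pmod{K'(G,C_p)}$; combined with the signature computations $\sigma(B_{G'C''})=-C''+pG'$ and $\sigma(B_{C''L})=-L+pC''$ from Lemma \ref{telescope}'s proof, the difference $(B_{G'C}+pB_{CL})-(B_{G'C'}+pB_{C'L})$ has signature zero, hence lies in $\mathcal M\cap K(G,C_p)$ by Corollary \ref{injective}; one then checks it is in fact ``indufted'' from the sub-quotient $(G'\times C_p)/(L\times\epsilon)\approx C_p\times C_p\times C_p$ — which is exactly the kind of element computed inside the proof of Lemma \ref{type3} — so it lies in $K'(G,C_p)$.

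Granting \eqref{eq:localmove}, the inductive step for general $e$ goes as follows. Given two resolutions $L=G_e<\cdots<G_0=G'$ and $L=H_e<\cdots<H_0=G'$, it suffices to connect them by a sequence of ``elementary'' modifications, each changing exactly one term of the chain and each covered by the $e=2$ case applied inside a window $G_{i-1}<G_{i+1}$. Concretely, I would argue that any resolution can be transformed into any other by repeatedly replacing a middle term $G_i$ (with $G_{i+1}<G_i<G_{i-1}$) by another intermediate subgroup $\tilde G_i$ of the same pair $(G_{i+1},G_{i-1})$; this is a standard connectivity fact about the poset of subgroups of a finite $p$-group between two fixed comparable subgroups, which one can prove by downward induction on $i$ after first matching the top term $G_1=H_1$ (using the $e=2$ case on the window $G_2<G_0=G'$ to replace $G_1$ by $H_1$, then recursing on the shorter resolutions from $L$ to $H_1$). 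Under such an elementary replacement at position $i$, the telescoping sum $\sum_{j=1}^e p^{e-j}B_{G_{j-1}G_j}$ changes only in the two consecutive terms $p^{e-i}B_{G_{i-1}G_i}+p^{e-i-1}B_{G_iG_{i+1}}$; pulling out the common factor $p^{e-i-1}$, the change is $p^{e-i-1}$ times $\bigl(pB_{G_{i-1}G_i}+B_{G_iG_{i+1}}\bigr)-\bigl(pB_{G_{i-1}\tilde G_i}+B_{\tilde G_iG_{i+1}}\bigr)$, which is $\equiv 0\pmod{K'(G,C_p)}$ by \eqref{eq:localmove} applied to the pair $(G_{i+1},G_{i-1})$. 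Summing the elementary steps gives the lemma.

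The main obstacle I anticipate is the verification in the base case $e=2$ that the signature-zero element $(B_{G'C}+pB_{CL})-(B_{G'C'}+pB_{C'L})$ genuinely lies in $K'(G,C_p)$ and not merely in $\mathcal M\cap K(G,C_p)$: one must exhibit it explicitly as $\text{Induf}(\Theta')$ for some $\Theta'\in K(P)$ with $P=(G'\times C_p)/(L\times\epsilon)\approx C_p\times C_p\times C_p$, i.e. expand all four $B$-terms via \eqref{signature} and the defining formula for Type 2 elements, and match the result against a $\mathbb Z$-linear combination of the basic $C_p\times C_p\times C_p$-relations $\Theta_P$ (the same bookkeeping already implicit in Lemma \ref{type3}). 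The poset-connectivity claim used in the inductive step is intuitively clear but should be stated as an explicit sublemma, proved by induction on the index $[G':L]$ using that the subgroups of index $p$ in a given $p$-group containing a fixed normal subgroup of index $p^2$ form a projective line when the quotient is $C_p\times C_p$; I would cite \cite{Red89} or \cite{Bou04} for the structural facts and give the short induction in full.
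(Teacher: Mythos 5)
Your overall strategy coincides with the paper's: connect the two resolutions by a sequence of elementary modifications, each replacing a single intermediate term $G_\lambda$ by another subgroup lying between $G_{\lambda+1}$ and $G_{\lambda-1}$ (a connectivity fact for which the paper, like you, ultimately defers to \cite{Red89}), and observe that each such modification changes the weighted sum by $p^{e-\lambda-1}$ times an element indufted from $(G_{\lambda-1}\times C_p)/(G_{\lambda+1}\times\epsilon)\approx C_p\times C_p\times C_p$. However, your displayed local identity is false as written: the powers of $p$ are attached to the wrong steps. In the telescoping sum $\sum_j p^{e-j}B_{G_{j-1}G_j}$ it is the \emph{upper} step of any length-two window that carries the extra factor of $p$, so the correct local move is $pB_{G'C}+B_{CL}\equiv pB_{G'C'}+B_{C'L}$, not $B_{G'C}+pB_{CL}\equiv B_{G'C'}+pB_{C'L}$. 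With your coefficients, since $\sigma(B_{G'C})=-C+pG'$ and $\sigma(pB_{CL})=-pL+p^2C$ by \eqref{signature}, the difference of the two sides has signature $(p^2-1)(C-C')\neq 0$, so by Corollary \ref{injective} it does not even lie in $K(G,C_p)$, and your claim that it ``has signature zero'' fails. You do use the correct combination $pB_{G_{i-1}G_i}+B_{G_iG_{i+1}}$ when you apply the local move in the inductive step, so the slip is confined to the statement and signature check of the base case; but as written the base case is wrong and must be repaired before the rest goes through.

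Two further remarks. First, your parenthetical sketch of the connectivity fact --- replacing $G_1$ by $H_1$ via ``the $e=2$ case on the window $G_2<G_0$'' --- does not work as stated, because $H_1$ need not contain $G_2$; the standard argument passes instead through $M=G_1\cap H_1$ (which contains $L$ and has index $p^2$ in $G'$) and a chosen resolution from $L$ to $M$, recursing inside the intervals $[L,G_1]$ and $[L,H_1]$. Since both you and the paper cite \cite{Red89} here, this is not a gap relative to the paper, but your sketch should not be taken at face value. Second, your detour through Lemma \ref{type3} and the signature map is unnecessary for the local move: once the coefficients are fixed, expanding the four Type 2 elements shows directly that all non-graph terms ($G'\times C_p$, $C\times C_p$, $C'\times C_p$, $L\times C_p$ and the $G'\times\tilde\epsilon$) cancel, leaving a combination of graph subgroups squeezed between $L\times\epsilon$ and $G'\times C_p$; this is exactly the paper's one-line justification that the element is indufted from a $C_p\times C_p\times C_p$ sub-quotient, and you correctly identify that this explicit verification, not the signature computation, is where the real content lies.
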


\begin{proof}
By \cite{Red89} there is a sequence of resolutions $$L=G_e^{(i)}<G^{(i)}_{e-1}<...<G^{(i)}_1<G_0^{(i)}=G$$ for $i=0,1,2,...,n$ such that (1) for each $k$ we have $G_k^{(0)}=G_k$, $G_k^{(n)}=H_k$, and (2) for each $i$ there is $\lambda$ depending on $i$ with $G_\lambda^{(i+1)}\neq G_\lambda^{(i)}$ and $G_k^{(i+1)}= G_k^{(i)}$ if $k\neq \lambda$. 

In this context, notice that the following combinations belong to $K'(G,C_p)$  
\begin{align}\label{ted}
&\sum_{j=1}^e p^{e-j}B_{G^{(i+1)}_{j-1}G^{(i+1)}_j}- \sum_{j=1}^e p^{e-j}B_{G^{(i)}_{j-1}G^{(i)}_j} \notag \\
&=p^{e-\lambda-1}\left(pB_{G^{(i+1)}_{\lambda-1}G^{(i+1)}_\lambda}-pB_{G^{(i)}_{\lambda-1}G^{(i)}_\lambda}+B_{G^{(i+1)}_{\lambda}G^{(i+1)}_{\lambda+1}}-B_{G^{(i)}_{\lambda}G^{(i)}_{\lambda+1}}\right)
\end{align}
since the terms on the right hand side of the equation are associated with two resolutions starting at $G_{\lambda+1}^{(i+1)}=G_{\lambda+1}^{(i)}$ and ending at $G_{\lambda-1}^{(i+1)}=G_{\lambda-1}^{(i)}$ and thus, they are 'indufted' from $\left(G_{\lambda-1}^{(i)}\times C_p\right)/\left(G_{\lambda+1}^{(i)}\times \epsilon\right)\approx C_p\times C_p\times C_p$ as noted in basic facts. By adding up all the relations \eqref{ted} for $i=0,1,2,...,n$ we get get the result.
\end{proof}

Let the order of $G$ be $p^n$ and for each $k=0,1,2,...,n$ define $\mathcal G_k$ to be the set of all subgroups of index $p^k$ in $G$. According to the formula \eqref{signature}, the Type 2 elements are in bijection with their signatures as listed for each pair $(X_i,X_{i+1})$ with $X_{i+1}<X_i$ and $X_k\in\mathcal G_k$ in the table below

\begin{table}[h]
\caption{} 
\label{1}
\begin{center}
\begin{tabular}{c|c|c|c|c}
$-X_n+pX_{n-1}$&$-X_{n-1}+pX_{n-2}$&...&$-X_2+pX_1$&$-X_1+pX_0$
\end{tabular} 
\end{center}
\end{table}

Recall that the Type 2 elements generate a submodule $\mathcal M\subset K(G\times C_p)$.  Using Table \ref{1} as a mirror, we build a new system of generators for $\mathcal M$ using elementary operations. Namely, by basic facts, each pair $X_{i+1}<X_i$ can be extended to a resolution
\begin{align}\label{resx}
X_{i+1}<X_i<X_{i-1}<...<X_1<X_0=G
\end{align}
and using this resolution we replace $B_{X_iX_{i+1}}$ by 
\begin{align}\label{genx}
B_{X_i X_{i+1}}+pB_{X_{i-1}X_i}+...+p^i B_{X_0X_1}
\end{align}
By Lemma \ref{telescope} the signature table of the new system of generators is given by 

\begin{table}[h]
\caption{} 
\label{2}
\begin{center}
\begin{tabular}{c|c|c|c|c}
$-X_n+p^nX_{0}$&$-X_{n-1}+p^{n-1}X_{0}$&...&$-X_2+p^2X_0$&$-X_1+pX_0$
\end{tabular} 
\end{center}
\end{table} 

In this new table, the signatures appear with repetitions. More precisely, any two resolutions starting at $X_{i+1}$ and ending at $X_0$, say  resolution \eqref{genx} and resolution
\begin{align}\label{resy}
X_{i+1}<Y_i<Y_{i-1}<...<Y_1<X_0=G
\end{align}
produce the signature $-X_{i+1}+p^{i+1}X_0$ corresponding to generator \eqref{genx} and generator
\begin{align}\label{geny}
B_{Y_i X_{i+1}}+pB_{Y_{i-1}Y_i}+...+p^i B_{X_0Y_1}
\end{align}
Using the generator \eqref{genx} as a pivot and subtracting that generator from the generator \eqref{geny}, we can remove the signature duplicate in Table \ref{2}. According to  Lemma \ref{zero}, the zero represents an element in $K'(G,C_p)$. By using this procedure, we eliminate all the repetitions in Table \ref{2} mod $K'(G,C_p)$. Let $\mathcal S$ be the system thus obtained of generators for $\mathcal M$. By Proposition \ref{reduction}, given an element $x\in K(G,C_p)$, we can write it as a $\mathbb Z$-linear combination $y\in\mathcal M$ of elements in $\mathcal S$ mod $K'(G,C_p)$ as follows
$$
x\equiv y \mod K'(G,C_p) \text{ where }\sigma(y)=\sum_{i=1}^{n}\sum_{X\in\mathcal G_i}m_X(-X+p^iX_0)
$$
where $m_X\in\mathbb Z$ are the coefficients mod $K'(G,C_p)$ of the combination $y$. By Corollary \ref{injective} we must have $\sigma(y)=0$. Since the collection of subgroups $\cup_{i=1}^n\mathcal G_i$ is a sub-basis for $B(G)$, we deduce that $m_X=0$ for each $X$. This proves that $y\in K'(G,C_p)$ and thus, $x\in K'(G,C_p)$ proving Theorem \ref{mainbb}.

\section{An example}

By Theorem \ref{mainprop}, a basis for $K(C_p\times C_p,C_p)[\frac{1}{p}]$ is given by 
\begin{align*}
(1\times 1)\times \epsilon-\sum_C(C\times \epsilon)+p(C_p\times C_p)\times \epsilon\\
(1\times 1)\times \lambda-\sum_C(C\times \lambda)+p(C_p\times C_p)\times \lambda
\end{align*}
Here $C$ runs over the cyclic subgroups of order $p$ of $C_p\times C_p$ and $\lambda$ over the surjective homomorphisms $\lambda:C_p\times C_p\to C_p$. By direct counting, we deduce that $$\text{\rm rank }K(C_p\times C_p,C_p)=1+(p+1)(p-1)=p^2.$$
In particular, for $p=2$ we have $\text{rank }K(C_2\times C_2,C_2)=4$. Specifically, the lattice of subgroups for $ e_{16}=(C_2\times C_2)\times C_2$ is given by
\begin{align*}{ e_9=(1\times C_2)\times C_2,\; e_{10}=(C_2\times 1)\times C_2,\; e_{11}=\Delta\times C_2}\\
e_{12}=(C_2\times C_2)\times \epsilon,\; e_{13}=(C_2\times C_2)\times p_1,\; 
e_{14}=(C_2\times C_2)\times p_2\\ e_{15}=(C_2\times C_2)\times \sigma\\
{ e_2=(1\times 1)\times C_2},\; e_3=(1\times C_2)\times\epsilon,\; e_4=(1\times C_2)\times p_2\\
e_5=(C_2\times 1)\times \epsilon,\; e_6=(C_2\times 1)\times p_1,\; e_7=\Delta\times\epsilon,\; e_8=\Delta\times\delta\\
e_1=(1\times 1)\times \epsilon
\end{align*}
where $\Delta\subset C_2\times C_2$ is the diagonal subgroup, $\rho:C_2\times C_2\to (C_2\times C_2)/\Delta$ is the canonical projection, $\delta:\Delta\to C_2$ is the unique isomorphism, $p_1, p_2:C_2\times C_2\to C_2$ are the projections on the first and the second component respectively.

The generators of $K(C_2\times C_2\times C_2)$ are
\begin{align*}
&{ E_9}=e_1-{e_2}-e_3-e_4+2{e_9}
&&{ E_{10}}=e_1-{e_2}-e_5-e_6+2{ e_{10}}\\
&{ E_{11}}=e_1-{e_2}-e_7-e_8+2{ e_{11}}
&&E_{12}=e_1-e_3-e_5-e_7+2e_{12}\\
&E_{13}=e_1-e_3-e_6-e_8+2e_{13}
&&E_{14}=e_1-e_4-e_5-e_8+2e_{14}\\
&E_{15}=e_1-e_4-e_6-e_7+2e_{15}
&&{ E_2}={ e_2}-{ e_9}-{ e_{10}}-{ e_{11}}+2{ e_{16}}\\
&{ E_3}=e_3-{ e_9}-e_{12}-e_{13}+2{ e_{16}}
&&{ E_4}=e_4-{ e_9}-e_{14}-e_{15}+2{ e_{16}}\\
&{ E_5}=e_5-{ e_{10}}-e_{12}-e_{14}+2{ e_{16}}
&&{ E_6}=e_6-{ e_{10}}-e_{13}-e_{15}+2{ e_{16}}\\
&{ E_7}=e_7-{ e_{11}}-e_{12}-e_{15}+2{ e_{16}}
&&{ E_8}=e_8-{ e_{11}}-e_{13}-e_{14}+2{ e_{16}}
\end{align*}

As in Proposition \ref{kahn}, we can prove that a basis for $K(C_2\times C_2,C_2)$ is given by
$$E_{15}, \; E_4-E_3, \; E_6-E_5, \; E_8-E_7.$$


\end{document}